\definecolor{darkgreen}{rgb}{0,0.45,0} 
\theoremstyle{plain}
\newtheorem{theorem}{Theorem}[section]
\newtheorem{lemma}[theorem]{Lemma}
\newtheorem{proposition}[theorem]{Proposition}
\newtheorem{corollary}[theorem]{Corollary}
\newtheorem{conjecture}[theorem]{Conjecture}
\theoremstyle{remark}
\newtheorem{remark}[theorem]{Remark}
\theoremstyle{remark}
\newtheorem{remarks}[theorem]{Remarks}
\theoremstyle{definition}
\newtheorem{example}[theorem]{Example}
\newtheorem{definition}[theorem]{Definition}
\numberwithin{equation}{section}
\DeclareMathOperator{\Id}{Id}
\DeclareMathOperator{\Der}{Der}
\DeclareMathOperator{\id}{id}
\DeclareMathOperator{\ch}{char}
\DeclareMathOperator{\im}{im}
\DeclareMathOperator{\End}{End}
\DeclareMathOperator{\Aut}{Aut}
\DeclareMathOperator{\supp}{supp}
\DeclareMathOperator{\Hom}{Hom}
\DeclareMathOperator{\PIexp}{PIexp}
\begin{document}

\title[Equivalences of (co)module algebra structures over Hopf algebras]{Equivalences of (co)module algebra structures over Hopf algebras}

\author{A.L. Agore}
\address{Simion Stoilow Institute of Mathematics of the Romanian Academy, P.O. Box 1-764, 014700 Bucharest, Romania}
\address{Vrije Universiteit Brussel, Pleinlaan 2, B-1050 Brussels, Belgium}
\email{ana.agore@vub.be}\thanks{Corresponding author: A.L. Agore}

\author{A.S. Gordienko}
\address{M.\,V.~Lomonosov Moscow State University,
Moscow Center for Fundamental and Applied Mathematics,
 Leninskiye gory, GSP-2, 119992 Moskva, Russia}
\address{Moscow State Technical University of Civil Aviation,
Department of Higher Mathematics, Kronshtadtsky boulevard, d.\,20, 125993 Moskva, Russia}
\email{a.gordienko@mstuca.aero}

\author{J. Vercruysse}
\address{D\'epartement de Math\'ematiques, Facult\'e des sciences, Universit\'e Libre de Bruxelles, Boulevard du Triomphe, B-1050 Bruxelles, Belgium}
\email{jvercruy@ulb.ac.be}

\keywords{Algebra, Hopf algebra, $H$-module algebra, $H$-comodule algebra, universal Hopf algebra, grading, group action}

\begin{abstract} 
We introduce the notion of {\it support equivalence} for (co)module algebras (over Hopf algebras), which generalizes in a natural way (weak) equivalence of gradings. 
We show that for each equivalence class of (co)module algebra structures on a given algebra $A$, there exists a unique universal Hopf algebra $H$ together with an $H$-(co)module structure on $A$
such that any other equivalent (co)module algebra structure on $A$ factors through the action
of $H$.
We study support equivalence and the universal Hopf algebras mentioned above for group gradings, Hopf--Galois extensions, actions of algebraic groups and cocommutative Hopf algebras.
We show how the notion of support equivalence
can be used to reduce the classification problem of Hopf algebra (co)actions. 
We apply support equivalence in the study of the asymptotic behaviour of codimensions of $H$-identities and, in particular, to the analogue (formulated by Yu. A. Bahturin) of Amitsur's conjecture, which was originally concerned with ordinary polynomial identities. As an example we prove this analogue for all unital 
$H$-module structures on the algebra $F[x]/(x^2)$ of dual numbers.
\end{abstract}

\subjclass[2010]{Primary 16W50; Secondary 16T05, 16T25, 16W22, 16W25.}

\thanks{The first author was partially supported by a grant of Romanian Ministery of Research and Innovation, CNCS - UEFISCDI, project number PN-III-P1-1.1-TE-2016-0124, within PNCDI III and is a fellow of FWO (Fonds voor Wetenschappelijk Onderzoek~-- Vlaanderen). The second author was supported by FWO post doctoral fellowship (Belgium) and is partially supported by a grant of the scientific council of the Moscow State Technical University of Civil Aviation (Russia). The third author wants to thank the FNRS for support via the MIS ``Antipode''.}

\maketitle
\tableofcontents

\section*{Introduction}

Module and comodule algebras over Hopf algebras (see the definitions in Sections~\ref{SectionHcomodule} and~\ref{SectionHmodule} below)
 appear in many areas of mathematics and physics. These notions
allow a unified approach to algebras with various kinds of an additional structure: group gradings, group actions by automorphisms, (skew)derivations, etc.
Another important class of examples arises from (affine) algebraic geometry: If $G$ is an affine algebraic group $G$ acting morphically on an affine algebraic variety $X$, then the algebra $A$ of regular functions on $X$ is an $H$-comodule algebra where $H$ is the algebra of regular functions on $G$. 
At the same time $A$ is an $U(\mathfrak g)$-module
algebra where $U(\mathfrak g)$ is the universal enveloping algebra
of the Lie algebra $\mathfrak g$ of the algebraic group $G$ (see e.g.~\cite{Abe} and Section~\ref{SectionActionsAffAlgGroups} below.) Taking this into account, one may view a (not necessarily commutative) (co)module algebra as an action of a quantum group on a non-commutative space.

The above point of view is also advocated in \cite{Manin}, where the notion of a universal coacting Hopf algebra ${\sf aut}(A)$ on an algebra $A$ was introduced (see also \cite{Tambara}), which plays the role of a symmetry group in non-commutative geometry. In order to classify  all (co)module algebra structures on a given algebra $A$, one therefore should understand the Manin-Hopf algebra ${\sf aut}(A)$, as well as its quotients. 
For particular cases, a description of ${\sf aut}(A)$ has been obtained in, for example, \cite{RVdB} and \cite{RVdB2}. However, finding an explicit description of ${\sf aut}(A)$ seems to be a very wild problem.
Furthermore, for this universal coacting Hopf algebra of an algebra $A$ to exist, one needs to impose a finiteness condition on the algebra $A$: it needs to be finite dimensional (see \cite[Proposition 1.3.8, Remark 2.6.4]{Pareigis}) or at least a rigid object in a suitable monoidal category. An example of the latter is a graded algebra for which each homogeneous component is finite dimensional, which was the setting of \cite{Manin}.
Without such a finiteness condition, one can indeed see that ${\sf aut}(A)$ does not always exist (see \cite{AGV} for an explicit example).
Therefore, we propose here a refinement of Manin's construction by studying comodule algebras up to {\it support equivalence}  and show that a Hopf algebra coacting universally up to support equivalence exists for a given comodule algebra $A$ without any finiteness assumption on $A$.

Inspiration for our approach comes from the theory of group graded algebras.
 In this context, two gradings are called \textit{equivalent} if there exists an algebra isomorphism between the graded algebras that maps each homogeneous component onto a homogeneous component (see Section~\ref{SectionGroupGradings}). Remark that no group (iso)morphism between the grading groups is required, but only a bijection between their supports (the support of a grading group is the set of all group elements for which the corresponding homogeneous component is non-zero). 
It turns out that when one studies the structure of a graded algebra (e.g. graded ideals, graded subspaces, radicals, \dots) or graded polynomial identities of graded algebras, the grading group itself does not play an important role, but can be replaced by any other group that realizes the same decomposition of the algebra into graded components. 
Using this approach, fine gradings on exceptional simple Lie algebras have been classified up to equivalence~\cite[Chapter 6]{ElduqueKochetov}.
In~\cite{GordienkoSchnabel} the authors studied the possibility of regrading finite dimensional algebras by finite groups. 

In Theorem~\ref{TheoremGradEquivCriterion} we give a criterion for equivalence of gradings in terms of operators from the dual of the corresponding group algebra. 
As group graded algebras are exactly comodule algebras over a group algebra, we propose a generalization of the above notions of equivalence for arbitrary comodule algebras, which we call {\it support equivalence} for comodule algebras. 
Using the correspondence between $H^*$-actions and $H$-coactions for finite dimensional Hopf algebras
$H$,
 we
 also
   define support equivalence for module algebra structures and, in particular,
   for group actions. Despite the formal duality, in Proposition~\ref{PropositionDoubleNumbersUniversal} we show
   that equivalent actions of infinite dimensional Hopf algebras are in general not as close to each other
   as equivalent coactions.

Among all groups that realize a given grading there is a distinguished one called the universal group
 of the grading (see~\cite[Definition~1.17]{ElduqueKochetov}, \cite{PZ89} and Definition~\ref{def:universal} below). It is easy to see that a similar universal group exists for group actions too (Remark~\ref{RemarkUniversalGroupOfAnAction}). 
Generalizing these constructions, we show in Theorems~\ref{TheoremUniversalHcomodExistence}
 and~\ref{TheoremFinExistenceHMod} that for a given (co)module algebra $A$ not necessarily finite dimensional, there exists a unique Hopf algebra $H$ with a (co)action on $A$, which is universal among all Hopf algebras that admit a support equivalent (co)action on $A$. As mentioned before, our universal Hopf algebra provides a refinement of the Manin-Hopf algebra which is universal among {\it all} coactions on $A$ (not only those that are support equivalent).
 
We study in particular equivalences of actions of algebraic groups and their associated Lie algebras. More precisely, for a connected affine algebraic group $G$ over an algebraically closed field $F$ of characteristic $0$ and its associated Lie algebra $\mathfrak g$, we show 
that the $U(\mathfrak g)$- and the $FG$-module structures
on a finite dimensional algebra  with a rational action are support equivalent (see Theorem~\ref{TheoremAffAlgGrAllEquiv}).

Although the universal Manin--Hopf algebra of an algebra is in general very difficult to calculate, there are several interesting classes of actions for which the universal Hopf algebra defined here can be computed explicitly. 
Firstly, in case the (co)action defines a Hopf--Galois extension, the universal Hopf algebra is precisely the original Hopf algebra
 (see Theorems~\ref{TheoremHComodHopfGaloisUnivHopfAlg} and~\ref{TheoremHModHopfGaloisUnivHopfAlg}).
Similarly, the universal Hopf algebra for the standard action of a Hopf algebra $H$ on the algebra $H^*$ is precisely $H$ itself
 (Theorem~\ref{TheoremUniversalHopfOfActionOnHStarIsItself}).
Furthermore, it turns out that the universal Hopf algebra of a comodule structure
 corresponding to a grading is just the group algebra of the universal group of this grading
 (Theorem~\ref{TheoremUniversalHopfOfAGradingIsJustUniversalGroup}). Quite surprisingly, the same result does no longer hold in the case of group actions. Indeed, in Proposition~\ref{PropositionDoubleNumbersUniversal}, we show that the universal Hopf algebra of a group action can contain non-trivial primitive elements.
More generally, we prove that the universal Hopf algebra of an action of a cocommutative Hopf algebra is not necessarily cocommutative (see Example~\ref{ExampleCocommUnivDifferent}). Therefore we also consider the universal cocommutative Hopf algebras of actions. We prove their existence and describe them explicitly in the case of an algebraically closed base field of characteristic $0$, thanks to the Milnor--Moore decomposition (see Theorem~\ref{TheoremUniversalCocommutative}).
In some cases, the universal Hopf algebra coincides with the cocommutative one, and hence can be described completely. This is the case, for instance, if one can prove that the universal Hopf algebra is cocommutative too, see Proposition~\ref{PropositionDoubleNumbersUniversal}.

Finally, we apply our results to polynomial identities and show that 
the codimensions of polynomial $H_1$- and $H_2$-identities for an algebra
with support equivalent $H_1$- and $H_2$-module structures coincide
and their polynomial $H_1$- and $H_2$-non-identities can be identified in a natural way (Lemma~\ref{LemmaHEquivCodimTheSame}). 
This allows us to prove that the analog of Amitsur's conjecture holds for polynomial $H$-identities of the algebra $F[x]/(x^2)$ (see Theorem~\ref{TheoremDoubleNumbersAmitsurPIexpH}).

The paper is organized as follows. 

In Section~\ref{SectionGroupGradings} we recall the definitions of
an isomorphism and an equivalence of gradings as well as of the universal
group of a grading and give a criterion for two gradings to be equivalent in terms 
of the algebras of linear endomorphisms of the corresponding graded algebras (Theorem~\ref{TheoremGradEquivCriterion}).
In Section~\ref{SectionGroupActions} we introduce the corresponding notion
of equivalence of group actions and calculate the universal group of an action.

In Section~\ref{SectionHcomodule} we give a definition of support equivalence
of comodule structures as well as a criterion for such an equivalence
in terms of comodule maps.
In Section~\ref{SectionHcomodUniversalHopfAlgebra} we introduce
universal Hopf algebras of comodule structures and prove their existence.
In addition we show that in the case of a group grading the universal Hopf algebra of the 
corresponding comodule structure is just the group algebra of the universal group
of the grading (Theorem~\ref{TheoremUniversalHopfOfAGradingIsJustUniversalGroup})
and any coaction which is support equivalent to a grading can be always reduced to a grading
(see the precise statement in Theorem~\ref{TheoremGradingCanBeEquivalentToAGradingOnly}).
Also we show that the universal Hopf algebra can be viewed as a functor
from the preorder of all comodule structures on a given algebra (with the respect to the relation ``finer/coarser'') to the category of Hopf algebras.
In Section~\ref{SectionHcomodHopfGalois} we consider comodule Hopf--Galois extensions and show
that the corresponding universal Hopf algebra is the original coacting Hopf algebra.

In Section~\ref{SectionHmodule} we give a definition of support equivalence
of module structures. We introduce universal Hopf algebras of module structures and prove their existence in Section~\ref{univmodalg}. 
In Section~\ref{SectionHmodHopfGalois} we consider module Hopf--Galois extensions and show
that the corresponding universal Hopf algebra is the original acting Hopf algebra.

In Section~\ref{SectionActionsAffAlgGroups} we consider the classical correspondence between connected affine algebraic groups $G$ over an algebraically closed field of characteristic $0$ and their Lie algebras $\mathfrak g$. Namely, we prove that $FG$-actions and $U(\mathfrak g)$-actions are support equivalent
(Theorem~\ref{TheoremAffAlgGrAllEquiv}).
Section~\ref{SectionActionsCocommHopfAlgebras} deals with universal cocommutative Hopf algebras. The notion of a universal cocommutative Hopf algebra is then used to calculate the universal Hopf algebra (Proposition~\ref{PropositionDoubleNumbersUniversal}).
In Section~\ref{SubsectionHPI} we show how support equivalences of module structures
can be applied to classify module structures on a given algebra and to
polynomial $H$-identities.

\section{Preliminaries}
Throughout this paper, $F$ denotes a field and unless specified otherwise, all vector spaces, tensor
products, homomorphisms, (co)algebras, Hopf
algebras are over $F$. For a coalgebra
$C$, we use the Sweedler $\Sigma$-notation: $\Delta(c)=
c_{(1)}\otimes c_{(2)}$,  for all $c \in C$, with suppressed summation sign. 

Our notation for the standard categories is as follows: $\mathbf{Alg}_F$ (algebras over $F$),   $\mathbf{Coalg}_F$ (coalgebras over $F$), $\mathbf{Hopf}_F$ (Hopf algebras over $F$).
Recall that there exists a left adjoint functor $L \colon \mathbf{Coalg}_F \to \mathbf{Hopf}_F$ for the forgetful functor $U\colon \mathbf{Hopf}_F \to \mathbf{Coalg}_F$ (see~\cite{Takeuchi}) and we denote by $\eta \colon \id_{\mathbf{Coalg}_F} \Rightarrow UL$ the unit of this adjunction.
Moreover, there exists a right adjoint functor $R \colon  \mathbf{Alg}_F \to \mathbf{Hopf}_F$ for the forgetful functor $U\colon \mathbf{Hopf}_F \to \mathbf{Alg}_F$ (see~\cite{AgoreCatConstr}, \cite{CH}). The counit of this adjunction will be denoted by $\mu\colon UR \Rightarrow \id_{\mathbf{Alg}_F} $. Remark that although we use the same  character $U$ for both forgetful functors, it will be clear from the context which forgetful functor is meant.

Given a bialgebra (or a Hopf algebra) $H$, a (not necessarily associative) algebra $A$ is called an  \textit{$H$-comodule algebra} if it admits a right $H$-comodule structure $\rho \colon A \to A \otimes H$ which is an algebra homomorphism ($A \otimes H$ has the usual tensor product algebra structure). Furthermore, $A$ is called a \textit{unital} $H$-comodule algebra if there exists
an identity element $1_A \in A$ such that $\rho(1_A)=1_A\otimes 1_{H}$. The map $\rho$ is called a \textit{comodule algebra structure} on $A$ and will be written in Sweedler notation as $\rho(a)=a_{(0)}\otimes a_{(1)}$, for all $a\in A$, again with suppressed summation sign. Explicitly, the fact that $\rho$ is an algebra homomorphism reads, for all $a$, $b \in A$, as follows:
$$
(ab)_{(0)} \otimes (ab)_{(1)} = a_{(0)} b_{(0)} \otimes a_{(1)} b_{(1)}.
$$
Note that any $H$-comodule algebra map $\rho \colon A \to A \otimes H$ gives rise to an algebra homomorphism $\zeta \colon H^* \to \End_F(A)$ defined by:
$\zeta(h^*)a=h^*(a_{(1)})a_{(0)}$ for all $a\in A$ and $h^*\in H^*$.
Here $H^*$ is the vector space dual to $H$, endowed with the structure of the algebra dual to the coalgebra $H$.
Recall that if $H$ is finite dimensional, then $H^*$ is a Hopf algebra too.  

A (not necessarily associative) algebra $A$ is called an \textit{$H$-module algebra} if it admits a left $H$-module structure such that: \begin{equation}\label{EqModCompat} h(ab)=(h_{(1)}a)(h_{(2)}b) \text{
for all }a,b\in A,\ h\in H.\end{equation}
We denote by $\zeta$ the homomorphism of algebras $H \to \End_F(A)$
defined by $\zeta(h)a=ha$, for all $h\in H$ and $a\in A$, and we call it a \textit{module algebra structure} on $A$. An $H$-module algebra $A$ will be called \textit{unital} if there exists
an identity element $1_A \in A$ such that $h 1_A =\varepsilon(h)1_A$
for all $h\in H$. 

\section{Equivalences of group gradings and group actions}

\subsection{Group gradings}\label{SectionGroupGradings}

When studying graded algebras, one has to determine, when two
graded algebras can be considered ``the same'' or equivalent.

Recall that $\Gamma \colon A=\bigoplus_{g \in G} A^{(g)}$
is a \textit{grading} on a (not necessarily associative) algebra $A$ by a group $G$ if $A^{(g)}A^{(h)}\subseteq A^{(gh)}$ for all $g,h\in G$.
Then $G$ is called the \textit{grading group} of $\Gamma$.
The algebra $A$ is called \textit{graded} by $G$.

Let \begin{equation}\label{EqTwoGroupGradings}\Gamma_1 \colon A_1=\bigoplus_{g \in G_1} A_1^{(g)},\qquad \Gamma_2
\colon A_2=\bigoplus_{g \in G_2} A_2^{(g)}\end{equation} be two gradings where $G_1$
and $G_2$ are groups and $A_1$ and $A_2$ are algebras.

The most restrictive case is when we require that both grading groups
coincide:

\begin{definition}[{e.g. \cite[Definition~1.15]{ElduqueKochetov}}]
\label{DefGradedIsomorphism}
Gradings~(\ref{EqTwoGroupGradings}) are \textit{isomorphic} if $G_1=G_2$ and there exists an isomorphism $\varphi \colon A_1 \mathrel{\widetilde\to} A_2$
of algebras such that $\varphi\left(A_1^{(g)}\right)=A_2^{(g)}$
for all $g\in G_1$.
\end{definition}

In this case we say that $A_1$ and $A_2$ are \textit{graded isomorphic}.

 If one studies the graded structure of a graded algebra or its graded polynomial identities~\cite{AljaGia, AljaGiaLa,
BahtZaiGradedExp, GiaLa, ASGordienko9}, then it is not really important by elements of which group the graded components are indexed.
A replacement of the grading group leaves both graded subspaces and graded ideals graded. In the case of graded polynomial identities reindexing
the graded components leads only to renaming the variables.
 Here we come naturally to the notion of (weak) equivalence of gradings.

\begin{definition}\label{DefGradedEquivalence}
We say that gradings~(\ref{EqTwoGroupGradings}) are \textit{(weakly) equivalent}, if there
exists an isomorphism $\varphi \colon A_1 \mathrel{\widetilde\to}A_2$
of algebras such that for every $g_1\in G_1$ with $A_1^{(g_1)}\ne 0$ there
exists $g_2\in G_2$ such that $\varphi\left(A_1^{(g_1)}\right)=A_2^{(g_2)}$.
\end{definition}

Obviously, if gradings are isomorphic, then they are equivalent.
It is important to notice that the converse is not true.

However, if gradings~(\ref{EqTwoGroupGradings}) are equivalent
and $\varphi \colon A_1 \mathrel{\widetilde\to}A_2$ is the corresponding isomorphism of algebras,
then $\Gamma_3 \colon A_1=\bigoplus_{g \in G_2} \varphi^{-1}\left( A_2^{(g)}\right)$ is a $G_2$-grading on $A_1$ isomorphic
to $\Gamma_2$ and the grading $\Gamma_3$ is obtained from $\Gamma_1$ just by reindexing the homogeneous components.
Therefore, when gradings~(\ref{EqTwoGroupGradings}) are equivalent, we say that $\Gamma_1$ \textit{can be regraded} by $G_2$.

If $A_1=A_2$ and $\varphi$ in Definition~\ref{DefGradedEquivalence} is the identity map, we say that $\Gamma_1$ and $\Gamma_2$ are \textit{realizations
of the same grading} on $A$ as, respectively, a $G_1$- and an $G_2$-grading.

For a grading $\Gamma \colon A=\bigoplus_{g \in G} A^{(g)}$, we denote by $\supp \Gamma := \lbrace g\in G \mid A^{(g)}\ne 0\rbrace$ its support.

\begin{remark}Each equivalence between gradings $\Gamma_1$ and $\Gamma_2$
 induces a bijection $\supp \Gamma_1 \mathrel{\widetilde\to} \supp \Gamma_2$.
\end{remark}

Each group grading on an algebra can be realized as a $G$-grading for many different groups $G$, however it turns out that there is one distinguished group among them~\cite[Definition~1.17]{ElduqueKochetov}, \cite{PZ89}.

\begin{definition}\label{def:universal}
Let $\Gamma$ be a group grading on an algebra $A$. Suppose that $\Gamma$ admits a realization
as a $G_\Gamma$-grading for some group $G_\Gamma$. Denote by $\varkappa_\Gamma$ the corresponding embedding
$\supp \Gamma \hookrightarrow G_\Gamma$. We say that $(G_\Gamma,\varkappa_\Gamma)$ is the \textit{universal group of the grading $\Gamma$} if for any realization of $\Gamma$ as a grading by a group $G$
with $\psi \colon \supp \Gamma \hookrightarrow G$ there exists
a unique homomorphism $\varphi \colon G_\Gamma \to G$ such that the following diagram is commutative:
$$\xymatrix{ \supp \Gamma \ar[r]^(0.6){\varkappa_\Gamma} \ar[rd]_\psi & G_\Gamma \ar@{-->}[d]^\varphi \\
& G
}
$$
\end{definition}

\begin{remarks} 
a) For each grading $\Gamma$ one can define a category $\mathcal C_\Gamma$
where the objects are all pairs $(G,\psi)$ such that $G$ is a group and $\Gamma$ can be realized
as a $G$-grading with $\psi \colon \supp \Gamma \hookrightarrow G$ being the embedding of the support.
In this category the set of morphisms between $(G_1,\psi_1)$ and $(G_2,\psi_2)$ consists
of all group homomorphisms $f \colon G_1 \to G_2$ such that the diagram below is commutative:
$$\xymatrix{ \supp \Gamma \ar[r]^(0.6){\psi_1} \ar[rd]_{\psi_2} & G_1 \ar[d]^f \\
& G_2
}
$$
Then $(G_\Gamma,\varkappa_\Gamma)$ is the initial object of $\mathcal C_\Gamma$.

b) It is easy to see that if $\Gamma \colon A = \bigoplus_{g\in\supp \Gamma} A^{(g)}$ is a group grading,
then the universal group $G_\Gamma$ of the grading $\Gamma$ is isomorphic to $\mathcal F_{[\supp \Gamma]}/N$ where $\mathcal F_{[\supp \Gamma]}$ is the free group on the set $[\supp \Gamma]:=\lbrace [g] \mid g \in \supp \Gamma \rbrace$ and $N$ is the normal closure of the words $[g][h][t]^{-1}$ for pairs $g,h \in \supp \Gamma$
such that $A^{(g)}A^{(h)}\ne 0$ where $t\in \supp\Gamma$ is defined by $A^{(g)}A^{(h)}\subseteq A^{(t)}$.

c) Of course, from the linguistic point of view, it would be more logic to say in our definition ``finer or equivalent'' instead of just ``finer'' and ``coarser or equivalent'' instead of just ``coarser'', however throughout this paper we drop the words ``or equivalent'' for brevity.
\end{remarks}

Let $\Gamma_1 \colon A = \bigoplus_{g\in G} A^{(g)}$ and 
$\Gamma_2 \colon A = \bigoplus_{h\in H} A^{(h)}$ be two gradings on the same algebra $A$.
If for every $g\in G$ there exists $h\in H$ such that $A^{(g)}\subseteq A^{(h)}$, then we say
that $\Gamma_1$ is \textit{finer} than $\Gamma_2$ and 
$\Gamma_2$ is \textit{coarser} than $\Gamma_1$. It is easy to see that this relation
is a preorder and $\Gamma_1$ is both finer and coarser than $\Gamma_2$ if and only if 
$\id_A$ is an equivalence of $\Gamma_1$ and $\Gamma_2$. Moreover, the universal group
of the grading is the functor from this preorder to the category of groups:
if $\Gamma_1$ is finer than $\Gamma_2$, the functor assigns the homomorphism
$G_{\Gamma_1} \to G_{\Gamma_2}$ defined by $[g] \mapsto [h]$ for $A^{(g)}\subseteq A^{(h)}$.

Now, in order to make it possible to transfer the relation of support equivalence and the relation ``coarser/finer'' to (co)module algebra structures, we translate these relations into the language of linear operators.

If an algebra $A = \bigoplus_{g\in G} A^{(g)}$ is graded by a group $G$,
then we have an $(FG)^*$-action on $A$ where $(FG)^*$ is the algebra dual to the group coalgebra $FG$, i.e. $(FG)^*$ is the algebra of all functions $G \to F$ with pointwise operations: $ha=h(g)a$ if $g\in G$, $a\in A^{(g)}$, $h\in (FG)^*$. 

\begin{lemma}\label{LemmaGradEquivCriterion}
Let~(\ref{EqTwoGroupGradings}) be two group gradings
and let $\varphi \colon A_1 \mathrel{\widetilde\to} A_2$
be an isomorphism of algebras.
Denote by $\zeta_i \colon (FG_i)^* \to \End_{F}(A_i)$
the homomorphism from $(FG_i)^*$ to the algebra $\End_{F}(A_i)$ of $F$-linear operators on $A_i$ induced by the $(FG_i)^*$-action, $i=1,2$, and by $\tilde\varphi$  the isomorphism $\End_{F}(A_1) \mathrel{\widetilde\to} \End_{F}(A_2)$ defined by $\tilde\varphi(\psi)(a)=\varphi\Bigl(\psi\bigl(\varphi^{-1}(a)\bigr)\Bigr)$ for $\psi\in \End_{F}(A_1)$ and $a\in A_2$.
Then the inclusion \begin{equation}\label{EqEmbeddingOfImagesOfFG}
\tilde\varphi\Bigl(\zeta_1\bigl((FG_1)^*\bigr)\Bigr)\supseteq \zeta_2\bigl((FG_2)^*\bigr)
\end{equation} holds
if and only if for every $g_1 \in G_1$ there exists $g_2\in G_2$
such that $\varphi\left(A_1^{(g_1)}\right)\subseteq A_2^{(g_2)}$.
\end{lemma}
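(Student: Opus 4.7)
The plan is to translate the operator-level inclusion~(\ref{EqEmbeddingOfImagesOfFG}) into the subspace-level condition by probing with the coordinate projections. First I would observe that any $h \in (FG_i)^*$ is simply a function $G_i \to F$ and that $\zeta_i(h)$ acts as the scalar $h(g)$ on each $A_i^{(g)}$. In particular, for every $g \in G_i$ the indicator function $\delta_g \in (FG_i)^*$ (taking value $1$ at $g$ and $0$ elsewhere) yields the projection $\pi^{(i)}_g := \zeta_i(\delta_g)$ of $A_i$ onto $A_i^{(g)}$ along the remaining components, so $\zeta_i((FG_i)^*)$ contains all of these projections.

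For the ``if'' direction, assume that for each $g_1 \in \supp \Gamma_1$ there exists some $g_2(g_1) \in G_2$ with $\varphi(A_1^{(g_1)}) \subseteq A_2^{(g_2(g_1))}$; the element $g_2(g_1)$ is then uniquely determined, since the spaces $A_2^{(g)}$ are in direct sum. Given $h \in (FG_2)^*$ I would define $h' \in (FG_1)^*$ by $h'(g_1) = h(g_2(g_1))$ for $g_1 \in \supp \Gamma_1$ and $h'(g_1) = 0$ otherwise, and then verify $\tilde\varphi(\zeta_1(h')) = \zeta_2(h)$ by evaluating both sides on homogeneous elements $\varphi(a)$ with $a \in A_1^{(g_1)}$.

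For the ``only if'' direction, fix $g_1 \in G_1$ with $A_1^{(g_1)} \ne 0$ and pick a nonzero $v \in \varphi(A_1^{(g_1)})$; write $v = \sum_{g_2} v_{g_2}$ in its finite $\Gamma_2$-decomposition. For each $g_2 \in G_2$ the projection $\pi^{(2)}_{g_2}$ lies in $\zeta_2((FG_2)^*) \subseteq \tilde\varphi(\zeta_1((FG_1)^*))$, so there exists $h_{g_2} \in (FG_1)^*$ with $\pi^{(2)}_{g_2} = \tilde\varphi(\zeta_1(h_{g_2}))$. Since $\zeta_1(h_{g_2})$ acts on $A_1^{(g_1)}$ as the scalar $\lambda_{g_2} := h_{g_2}(g_1)$, the operator $\pi^{(2)}_{g_2}$ restricts on $\varphi(A_1^{(g_1)})$ to multiplication by $\lambda_{g_2}$, which yields $v_{g_2} = \lambda_{g_2} v$. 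Whenever $\lambda_{g_2} \ne 0$ this forces $v \in A_2^{(g_2)}$; since $v \ne 0$ has at least one nonzero component, such a $g_2$ must exist, so $v$ itself is homogeneous. A brief linearity argument (if two nonzero elements of $\varphi(A_1^{(g_1)})$ lay in distinct components $A_2^{(g_2)}$ and $A_2^{(g_2')}$, their sum would no longer be homogeneous) then shows all of $\varphi(A_1^{(g_1)})$ is contained in a single $A_2^{(g_2)}$.

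The main subtlety, I expect, lies in the ``only if'' step: a priori the scalar $\lambda_{g_2}$ depends on $g_2$, while the $\Gamma_2$-decomposition of $v$ is only finite although $g_2$ ranges over a possibly infinite support. The key observation is that the pointwise identity $v_{g_2} = \lambda_{g_2} v$ automatically confines the support of $v$ to those $g_2$ with $\lambda_{g_2} \ne 0$, and that each such $g_2$ forces $v$ itself into $A_2^{(g_2)}$, collapsing the whole $\varphi(A_1^{(g_1)})$ into a single homogeneous component.
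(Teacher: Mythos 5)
Your proof is correct and follows essentially the same route as the paper's: both directions rest on the observation that $\zeta_i\bigl((FG_i)^*\bigr)$ consists of the operators acting by a scalar on each homogeneous component, and the ``only if'' direction probes the inclusion with the coordinate projections $\zeta_2(\delta_{g_2})$. The only (cosmetic) difference is that you conclude by showing each nonzero element of $\varphi\left(A_1^{(g_1)}\right)$ is $\Gamma_2$-homogeneous, whereas the paper uses $p_{g_2}^2=p_{g_2}$ to show each $A_2^{(g_2)}$ is a direct sum of certain $\varphi\left(A_1^{(g_1)}\right)$ and then compares the two decompositions.
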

\begin{proof}
If for every $g_1 \in G_1$ there exists $g_2\in G_2$
such that $\varphi\left(A_1^{(g_1)}\right)\subseteq A_2^{(g_2)}$,
then each $A_2^{(g_2)}$ is a direct sum of some
of $\varphi\left(A_1^{(g_1)}\right)$
since $$\bigoplus_{g_1 \in G_1} \varphi\left(A_1^{(g_1)}\right)=\varphi(A_1)=A_2=\bigoplus_{g_2 \in G_2} A_2^{(g_2)}.$$
Note that the set $\zeta_i\bigl((FG_i)^*\bigr)$ 
consists of all the linear operators that act by a scalar operator
on each homogeneous component $A_i^{(g_i)}$, $g_i \in G_i$.
 Hence 
$\tilde\varphi\Bigl(\zeta_1\bigl((FG_1)^*\bigr)\Bigr)$ consists of
all the linear operators on $A_2$
that act on each $\varphi\Bigl(\zeta_1\left(A_1^{(g_1)}\right)\Bigr)$ by a scalar operator.
Since each $A_2^{(g_2)}$ is a direct sum of some
of $\varphi\left(A_1^{(g_1)}\right)$, all the operators from $\zeta_2 \bigl((FG_2)^*\bigr)$
act by a scalar operator on each of $\varphi\left(A_1^{(g_1)}\right)$ too.
Therefore~(\ref{EqEmbeddingOfImagesOfFG}) holds.

Conversely, suppose~(\ref{EqEmbeddingOfImagesOfFG}) holds.
Denote by $p_{g_2}$, where $g_2 \in \supp \Gamma_2$, the projection on $A_2^{(g_2)}$
along $\bigoplus\limits_{\substack{h\in \supp\Gamma_2, \\ h \ne g_2}} A_2^{(h)}$,
i.e. $p_{g_2} a := a$ for all $a\in A_2^{(g_2)}$
and $p_{g_2} a := 0$ for all $a \in \bigoplus\limits_{\substack{h\in \supp\Gamma_2, \\ h \ne g_2}} A_2^{(h)}$. Then~(\ref{EqEmbeddingOfImagesOfFG}) implies that $p_{g_2}\in \tilde\varphi\Bigl(\zeta_1\bigl((FG_1)^*\bigr)\Bigr)$
for all $g_2 \in \supp \Gamma_2$. In particular, $p_{g_2}$ is 
acting as a scalar operator on all the components $\varphi\left(A_1^{(g)}\right)$
where $g\in \supp\Gamma_1$. Since $p_{g_2}^2 = p_{g_2}$,
for every $g\in \supp\Gamma_1$ either $p_{g_2} \varphi\left(A_1^{(g)}\right) = 0$
 or $p_{g_2} a = a$
for all $a\in \varphi\left(A_1^{(g)}\right)$.
Hence $A_2^{(g_2)}=\im(p_{g_2})$ is a direct sum of some of $\varphi\left(A_1^{(g)}\right)$
where $g\in\supp \Gamma_1$. Since $$
\bigoplus\limits_{g_1\in \supp\Gamma_1} \varphi\left(A_1^{(g_1)}\right) =
\bigoplus\limits_{g_2\in \supp\Gamma_2} A_2^{(g_2)},$$
for every $g_1 \in G_1$ there exists $g_2\in G_2$
such that $\varphi\left(A_1^{(g_1)}\right)\subseteq A_2^{(g_2)}$.
 \end{proof}

From Lemma~\ref{LemmaGradEquivCriterion} we immediately deduce the criteria that are crucial to transfer the notion of equivalence and the relation ``finer/coarser'' from gradings to (co)module structures:

\begin{theorem}\label{TheoremGradEquivCriterion}
Let~(\ref{EqTwoGroupGradings}) be two group gradings.
Then an isomorphism $\varphi \colon A_1 \mathrel{\widetilde\to} A_2$
of algebras
defines an equivalence of gradings
if and only if 
$$
\tilde\varphi\Bigl(\zeta_1\bigl((FG_1)^*\bigr)\Bigr)=\zeta_2\bigl((FG_2)^*\bigr)$$
where $\zeta_i \colon (FG_i)^* \to \End_{F}(A_i)$
is the homomorphism from $(FG_i)^*$ to the algebra $\End_{F}(A_i)$ of $F$-linear operators on $A_i$ induced by the $(FG_i)^*$-action, $i=1,2$, and the isomorphism $\tilde\varphi \colon \End_{F}(A_1) \mathrel{\widetilde\to} \End_{F}(A_2)$ is defined by $\tilde\varphi(\psi)(a)=\varphi\Bigl(\psi\bigl(\varphi^{-1}(a)\bigr)\Bigr)$ for $\psi\in \End_{F}(A_1)$ and $a\in A_2$.
\end{theorem}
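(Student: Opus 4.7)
The plan is to deduce this theorem directly from Lemma~\ref{LemmaGradEquivCriterion} by applying it once to $\varphi$ and once to $\varphi^{-1}$, and then observing that equivalence of gradings is characterized by a two-sided containment condition on the homogeneous components.

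First, I would reformulate Definition~\ref{DefGradedEquivalence} as a conjunction of two one-sided conditions. Namely, an isomorphism $\varphi\colon A_1 \mathrel{\widetilde\to} A_2$ is an equivalence of $\Gamma_1$ and $\Gamma_2$ if and only if both
\begin{enumerate}
\item[(i)] for every $g_1 \in G_1$ there exists $g_2 \in G_2$ with $\varphi\bigl(A_1^{(g_1)}\bigr) \subseteq A_2^{(g_2)}$, and
\item[(ii)] for every $g_2 \in G_2$ there exists $g_1 \in G_1$ with $\varphi^{-1}\bigl(A_2^{(g_2)}\bigr) \subseteq A_1^{(g_1)}$
\end{enumerate}
hold. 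This equivalence of formulations follows from a short direct-sum argument: if (i) and (ii) hold, then for each $g_1$ with $A_1^{(g_1)}\ne 0$, picking $g_2$ as in~(i) and then $g_1'$ as in~(ii) applied to this $g_2$ gives $A_1^{(g_1)} \subseteq A_1^{(g_1')}$, forcing $g_1 = g_1'$ and therefore equality $\varphi\bigl(A_1^{(g_1)}\bigr) = A_2^{(g_2)}$; the converse is obvious.

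Next, I would apply Lemma~\ref{LemmaGradEquivCriterion} twice. A direct application to $\varphi$ shows that condition~(i) is equivalent to
\[
\tilde\varphi\Bigl(\zeta_1\bigl((FG_1)^*\bigr)\Bigr) \supseteq \zeta_2\bigl((FG_2)^*\bigr).
\]
Applying the lemma to $\varphi^{-1}\colon A_2 \mathrel{\widetilde\to} A_1$ (with the roles of $\Gamma_1$ and $\Gamma_2$ swapped) yields that condition~(ii) is equivalent to
\[
\widetilde{\varphi^{-1}}\Bigl(\zeta_2\bigl((FG_2)^*\bigr)\Bigr) \supseteq \zeta_1\bigl((FG_1)^*\bigr),
\]
and since $\widetilde{\varphi^{-1}} = (\tilde\varphi)^{-1}$, this can be rewritten as $\zeta_2\bigl((FG_2)^*\bigr) \supseteq \tilde\varphi\Bigl(\zeta_1\bigl((FG_1)^*\bigr)\Bigr)$. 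Combining the two containments gives the desired equality.

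I do not expect any serious obstacle: the only delicate point is the reformulation of equivalence as the conjunction of~(i) and~(ii), which is a clean consequence of $\varphi$ being a bijection compatible with two direct sum decompositions. Everything else is a direct application of the already-established lemma, together with the trivial observation that $\widetilde{\varphi^{-1}}$ is the inverse of $\tilde\varphi$ as an isomorphism $\End_F(A_2)\mathrel{\widetilde\to}\End_F(A_1)$.
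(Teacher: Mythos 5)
Your proposal is correct and follows exactly the paper's route: the paper's entire proof is the one-line ``apply Lemma~\ref{LemmaGradEquivCriterion} to $\varphi$ and $\varphi^{-1}$,'' and your write-up simply fills in the details of that application (the reformulation of equivalence as the two one-sided containments and the identity $\widetilde{\varphi^{-1}}=\tilde\varphi^{-1}$), all of which check out.
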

\begin{proof}
We apply Lemma~\ref{LemmaGradEquivCriterion} to $\varphi$ and $\varphi^{-1}$.
\end{proof}

\begin{theorem}\label{TheoremGradFinerCoarserCriterion}
Let $\Gamma_1 \colon A = \bigoplus_{g\in G_1} A^{(g)}$ and 
$\Gamma_2 \colon A = \bigoplus_{g\in G_2} A^{(g)}$ be two gradings on the same algebra $A$
and let $\zeta_i \colon (FG_i)^* \to \End_{F}(A)$ be the corresponding homomorphisms.
Then $\Gamma_1$ is finer than $\Gamma_2$ if and only if
$\zeta_1\bigl((FG_1)^*\bigr) \supseteq \zeta_2\bigl((FG_2)^*\bigr)$.
\end{theorem}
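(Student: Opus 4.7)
The plan is to obtain the theorem as an immediate specialization of Lemma~\ref{LemmaGradEquivCriterion}. Since both gradings live on the same underlying algebra $A$, I would apply that lemma with $A_1 = A_2 = A$ and with the algebra isomorphism $\varphi$ chosen to be the identity map $\id_A$. Under this choice, the conjugation map $\tilde\varphi \colon \End_F(A) \to \End_F(A)$ defined by $\tilde\varphi(\psi)(a)=\varphi(\psi(\varphi^{-1}(a)))$ is literally the identity automorphism of $\End_F(A)$, so the image $\tilde\varphi\bigl(\zeta_1((FG_1)^*)\bigr)$ coincides with $\zeta_1((FG_1)^*)$ itself.

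With this specialization in hand, the operator-theoretic inclusion $\tilde\varphi\bigl(\zeta_1((FG_1)^*)\bigr) \supseteq \zeta_2((FG_2)^*)$ appearing in Lemma~\ref{LemmaGradEquivCriterion} reduces verbatim to the condition $\zeta_1((FG_1)^*) \supseteq \zeta_2((FG_2)^*)$ of the present theorem. Simultaneously, the geometric condition that for every $g_1 \in G_1$ there exists $g_2 \in G_2$ with $\varphi(A^{(g_1)}) \subseteq A^{(g_2)}$ becomes simply $A^{(g_1)} \subseteq A^{(g_2)}$, which is exactly the definition of $\Gamma_1$ being finer than $\Gamma_2$. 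Invoking the biconditional supplied by Lemma~\ref{LemmaGradEquivCriterion} then finishes the proof. There is no real obstacle: the substantive argument (computing the images of the $(FG_i)^*$-actions and reconstructing the graded decomposition from projections) has already been carried out in the lemma, and the only verification required here is that taking $\varphi = \id_A$ makes the conjugation $\tilde\varphi$ trivial.
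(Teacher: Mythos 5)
Your proposal is correct and is exactly the paper's argument: the paper's proof of this theorem is the one-line application of Lemma~\ref{LemmaGradEquivCriterion} with $A = A_1 = A_2$ and $\varphi = \id_A$, which you have simply spelled out in more detail.
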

\begin{proof}
We apply Lemma~\ref{LemmaGradEquivCriterion} to $A=A_1=A_2$ and $\varphi = \id_A$.
\end{proof}

\subsection{Group actions}\label{SectionGroupActions}

Suppose we have a dual situation: for $i=1,2$ a group $G_i$ is acting on an algebra $A_i$ by automorphisms
where $\zeta_i \colon G_i \to \Aut(A_i)$ are the corresponding group homomorphisms.
Inspired by Theorem~\ref{TheoremGradEquivCriterion}, we introduce the following definition:

\begin{definition}\label{DefGroupActionEquivalence}
We say that actions $\zeta_1$ and $\zeta_2$ are \textit{equivalent
via an isomorphism $\varphi$} if $\varphi \colon A_1 \mathrel{\widetilde\to}A_2$
is an isomorphism
of algebras such that \begin{equation*}
\tilde\varphi\Bigl(\langle \zeta_1\bigl(G_1 \bigr) \rangle_F\Bigr)=\langle \zeta_2\bigl(G_2 \bigr) \rangle_F
\end{equation*}
where the $F$-linear span $\langle \cdot \rangle_F$ is taken in the corresponding $\End(A_i)$ and the isomorphism $\tilde\varphi \colon \End_{F}(A_1) \mathrel{\widetilde\to} \End_{F}(A_2)$ is defined by $\tilde\varphi(\psi)(a)=\varphi\Bigl(\psi\bigl(\varphi^{-1}(a)\bigr)\Bigr)$ for $\psi\in \End_{F}(A_1)$ and $a\in A_2$.
\end{definition}

If $\Gamma \colon A=\bigoplus_{g\in G} A^{(g)}$ is a grading on an algebra $A$ by a group $G$, there exists a standard $\Hom(G,F^\times)$-action on $A$ by automorphisms: \begin{equation}\label{EqGroupActionChi}\chi a := \chi(g)a\text{ for }\chi \in \Hom(G,F^\times),\ a\in A^{(g)},\ g\in G.\end{equation}
Extend each $\chi \in \Hom(G,F^\times)$ by linearity to a map
$FG \to F$. Then this $\Hom(G,F^\times)$-action becomes just the restriction of the map $\zeta \colon (FG)^* \to \End_F(A)$ corresponding to $\Gamma$.

 In the case when $F$ is algebraically closed of characteristic $0$
and $G$ is finite abelian, the group $\Hom(G,F^\times)$ is usually denoted by $\widehat G$
and called the \textit{Pontryagin dual group}. The classical theorem on the structure of finitely generated
abelian groups implies then $\widehat G \cong G$. Moreover, (\ref{EqGroupActionChi}) defines
a one-to-one correspondence between $G$-gradings and $\widehat G$-actions. (See the details
e.g. in~\cite[Section 3.2]{ZaiGia}.)
 The proposition below shows that in this case 
equivalent $G$-gradings correspond to equivalent $\widehat G$-actions.

\begin{proposition}
Let~(\ref{EqTwoGroupGradings}) be two group gradings by finite abelian groups $G_1$ and $G_2$
and let $\zeta_i$ be the corresponding 
$(FG_i)^*$-actions, $i=1,2$.
 Suppose that the base field $F$ is algebraically closed of characteristic $0$. 
Then $\Gamma_1$ and $\Gamma_2$ are equivalent as group gradings if and only if $\zeta_1 \bigr|_{\widehat G_1}$
and $\zeta_2 \bigr|_{\widehat G_2}$ are equivalent as group actions.
\end{proposition}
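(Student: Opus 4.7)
The plan is to reduce the statement to the criterion of Theorem~\ref{TheoremGradEquivCriterion}. For an algebra isomorphism $\varphi \colon A_1 \mathrel{\widetilde\to} A_2$, that theorem asserts that $\varphi$ is an equivalence of $\Gamma_1$ and $\Gamma_2$ iff
$$
\tilde\varphi\bigl(\zeta_1((FG_1)^*)\bigr) = \zeta_2((FG_2)^*).
$$
On the other hand, by Definition~\ref{DefGroupActionEquivalence}, the same $\varphi$ is an equivalence of the group actions $\zeta_i\bigr|_{\widehat{G}_i}$ iff
$$
\tilde\varphi\bigl(\langle \zeta_1(\widehat{G}_1)\rangle_F\bigr) = \langle \zeta_2(\widehat{G}_2)\rangle_F.
$$
Since $\zeta_i$ is $F$-linear, $\langle \zeta_i(\widehat{G}_i)\rangle_F = \zeta_i\bigl(\langle \widehat{G}_i \rangle_F\bigr)$, so the whole statement reduces to the single identity $\langle \widehat{G}_i \rangle_F = (FG_i)^*$ for $i=1,2$, viewed as subspaces of $(FG_i)^*$.

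The main (and in fact only nontrivial) point is this identity, which is a classical consequence of character theory. Since $G_i$ is a finite abelian group and $F$ is algebraically closed of characteristic $0$, the group algebra $FG_i$ is commutative and semisimple, and splits as a direct product of $|G_i|$ copies of $F$. Equivalently, the distinct characters $\widehat{G}_i = \Hom(G_i,F^\times)$ separate the elements of $G_i$, and the orthogonality relations show that they form a linearly independent subset of $(FG_i)^*$. Since $|\widehat{G}_i| = |G_i| = \dim_F (FG_i)^*$, the characters form an $F$-basis of $(FG_i)^*$, so $\langle \widehat{G}_i\rangle_F = (FG_i)^*$ as required.

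Combining these two ingredients, the equivalence criterion from Theorem~\ref{TheoremGradEquivCriterion} and the equivalence condition from Definition~\ref{DefGroupActionEquivalence} become literally the same condition on $\varphi$, and the proposition follows. I do not anticipate a real obstacle here, beyond invoking the standard fact that $\widehat{G}$ spans $(FG)^*$ under our hypotheses on $F$ and $G$; the only point to verify carefully is that the linear span in $\End_F(A_i)$ is compatible with the linear span in $(FG_i)^*$ via the (not necessarily injective) map $\zeta_i$, which holds automatically because $\zeta_i$ is $F$-linear.
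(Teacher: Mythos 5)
Your proposal is correct and follows essentially the same route as the paper: both reduce the statement to Theorem~\ref{TheoremGradEquivCriterion} via the observation that the orthogonality relations make $\widehat G_i$ a basis of $(FG_i)^*$, so that $\zeta_i\bigl((FG_i)^*\bigr)=\langle \zeta_i(\widehat G_i)\rangle_F$. The paper's proof is just a terser version of exactly this argument.
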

\begin{proof}
The orthogonality relations for characters imply that the elements of $\widehat G_i$ form a basis in $(FG_i)^*$. Hence $\zeta_i((FG_i)^*)= \langle\zeta_i(\widehat G_i )\rangle$ for $i=1,2$ and the proposition follows
from Theorem~\ref{TheoremGradEquivCriterion}.
\end{proof}

Return now to the case of arbitrary groups $G_1$ and $G_2$ and an arbitrary field $F$.
As in the case of gradings, we can identify $A_1$ and $A_2$ via $\varphi$. Then the equivalence of $\zeta_1$ and $\zeta_2$
means that the images of $G_1$ and $G_2$ generate the same subalgebra in the algebra of $F$-linear operators
on $A_1=A_2$.

\begin{definition}\label{UniversalGroupOfTheAction}
Let $\zeta \colon G \to \Aut(A)$ be an action of a group $G$ on an algebra $A$ and let $\varkappa_{\zeta} \colon G_{\zeta} \to \Aut(A)$ be an action of a group $G_\zeta$ equivalent to $\zeta$ via the identity isomorphism $\id_A$.  We say that the pair $(G_{\zeta}, \varkappa_{\zeta})$ is a \textit{universal group of the action $\zeta$}
if for any other action $\zeta_1 \colon G_1 \to \Aut(A)$ equivalent to $\zeta$ via $\id_A$
there exists a unique group homomorphism $\varphi \colon G_1 \to G_{\zeta}$ such that the following diagram is commutative:
$$\xymatrix{ \Aut(A)  & G_{\zeta} \ar[l]_(0.3){\varkappa_{\zeta}}  \\
& G_1 \ar[lu]^{\zeta_1} \ar@{-->}[u]_\varphi
}
$$
\end{definition}

\begin{remark}\label{RemarkUniversalGroupOfAnAction}
Consider the group $\mathcal U(\langle \zeta(G) \rangle_F)
\cap \Aut(A)$ where $\mathcal U(\langle \zeta(G) \rangle_F)$ is the group of invertible elements
of the algebra $\langle \zeta(G) \rangle_F \subseteq \End_F(A)$.
Since for every $G_1$ as above the image of $G_1$ in $\Aut(A)$ belongs
to  $\mathcal U(\langle \zeta_1(G_1) \rangle_F) \cap \Aut(A) = \mathcal U(\langle \zeta(G) \rangle_F)\cap \Aut(A)$,
 the universal group of the action $\zeta$
is (up to an isomorphism) the couple $(G_{\zeta}, \varkappa_{\zeta})$ where $$G_{\zeta}:= \mathcal U(\langle \zeta(G) \rangle_F)
\cap \Aut(A)$$
and $\varkappa_{\zeta}$ is the natural embedding $G_{\zeta} \subseteq \Aut(A)$.
\end{remark}

Let $\zeta_i \colon G_i \to \Aut(A)$, $i=1,2$, be two group actions on $A$ by automorphisms.
We say that $\zeta_1$ is \textit{finer} than $\zeta_2$ and $\zeta_2$ is \textit{coarser} than $\zeta_1$
if $\langle \zeta_2(G) \rangle_F \subseteq \langle \zeta_1(G) \rangle_F$.
Again, it is easy to see that this relation
is a preorder and $\zeta_1$ is both finer and coarser than $\zeta_2$ if and only if 
$\id_A$ is an equivalence of $\zeta_1$ and $\zeta_2$. Moreover, the universal group
of the action is the functor from this preorder to the category of groups:
if $\zeta_1$ is finer than $\zeta_2$, the functor assigns the embedding
$\mathcal U\bigl(\langle \zeta(G_2) \rangle_F\bigr)
\cap \Aut(A) \subseteq \mathcal U\bigl(\langle \zeta(G_1) \rangle_F\bigr)
\cap \Aut(A)$.

\begin{proposition}
Let~(\ref{EqTwoGroupGradings}) be two group gradings by finite abelian groups $G_1$ and $G_2$
and let $\zeta_i$ be the corresponding 
$(FG_i)^*$-actions, $i=1,2$.
 Suppose that the base field $F$ is algebraically closed of characteristic $0$. 
Then $\Gamma_1$ is finer than $\Gamma_2$ if and only if $\zeta_1 \bigr|_{\widehat G_1}$
is finer than $\zeta_2 \bigr|_{\widehat G_2}$.
\end{proposition}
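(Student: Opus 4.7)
The plan is to observe that this proposition follows from Theorem~\ref{TheoremGradFinerCoarserCriterion} by precisely the same mechanism that was used in the preceding proposition to pass from equivalence of gradings to equivalence of actions. Since ``finer/coarser'' is defined in terms of an inclusion of images rather than an equality, nothing substantial changes.

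First I would invoke the standard fact that if $F$ is algebraically closed of characteristic $0$ and $G$ is a finite abelian group, then the orthogonality relations for characters show that $\widehat G$ is $F$-linearly independent in $(FG)^*$; since $|\widehat G| = |G| = \dim_F (FG)^*$, the characters in fact form an $F$-basis of $(FG)^*$. Applying this for $i=1,2$, I would conclude that
\[
\zeta_i\bigl((FG_i)^*\bigr) \;=\; \bigl\langle \zeta_i(\widehat G_i) \bigr\rangle_F,
\]
i.e. the image of the whole dual group algebra under $\zeta_i$ coincides with the $F$-linear span of the image of the Pontryagin dual.

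Next I would chain together the following equivalences, implicitly assuming (as in Theorem~\ref{TheoremGradFinerCoarserCriterion}) that $\Gamma_1$ and $\Gamma_2$ are gradings on the same algebra $A$: $\Gamma_1$ is finer than $\Gamma_2$ iff $\zeta_1\bigl((FG_1)^*\bigr) \supseteq \zeta_2\bigl((FG_2)^*\bigr)$ (by Theorem~\ref{TheoremGradFinerCoarserCriterion}), iff $\bigl\langle \zeta_1(\widehat G_1)\bigr\rangle_F \supseteq \bigl\langle \zeta_2(\widehat G_2)\bigr\rangle_F$ (by the character-basis identification of the previous paragraph), iff $\zeta_1\bigr|_{\widehat G_1}$ is finer than $\zeta_2\bigr|_{\widehat G_2}$ (by the definition of ``finer'' for group actions given just before the proposition).

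There is essentially no obstacle: the only content is the orthogonality remark, and this was already recorded in the proof of the preceding proposition, so the present proof is a one-line reference to Theorem~\ref{TheoremGradFinerCoarserCriterion} once one observes that replacing ``$=$'' by ``$\supseteq$'' throughout that earlier argument causes no difficulty.
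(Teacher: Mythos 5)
Your proof is correct and follows the paper's own argument exactly: both use the orthogonality relations to identify $\zeta_i\bigl((FG_i)^*\bigr)$ with $\bigl\langle \zeta_i(\widehat G_i)\bigr\rangle_F$ and then reduce the claim to Theorem~\ref{TheoremGradFinerCoarserCriterion} together with the definition of ``finer'' for group actions. Nothing to add.
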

\begin{proof}
Again, the orthogonality relations for characters imply that the elements of $\widehat G_i$ form a basis in $(FG_i)^*$. Hence $\zeta_i((FG_i)^*)= \langle\zeta_i(\widehat G_i )\rangle$ for $i=1,2$ and the proposition follows
from Theorem~\ref{TheoremGradFinerCoarserCriterion}.
\end{proof}

\section{Comodule algebras}

\subsection{Support equivalence of comodule structures on algebras}\label{SectionHcomodule}

Inspired by Theorem~\ref{TheoremGradEquivCriterion},
we give the following definition:

\begin{definition}
Let $A_i$ be (not necessarily associative) $H_i$-comodule algebras for Hopf algebras $H_i$, $i=1,2$. We say
that comodule structures on $A_1$ and $A_2$ are {\it support equivalent} via the algebra isomorphism
$\varphi \colon A_1 \mathrel{\widetilde\to} A_2$ 
if 
\begin{equation}\label{EqImagesOfHistarCoincide}
\tilde\varphi\Bigl(\zeta_1\bigl(H_1^*\bigr)\Bigr)=\zeta_2\bigl(H_2^*\bigr)
\end{equation}
where $\zeta_i$ is the algebra homomorphism $H_i^* \to \End_{F}(A_i)$ induced by the comodule algebra structure on $A_{i}$ and the isomorphism $\tilde\varphi \colon \End_{F}(A_1) \mathrel{\widetilde\to} \End_{F}(A_2)$ is defined by $\tilde\varphi(\psi)(a)=\varphi\Bigl(\psi\bigl(\varphi^{-1}(a)\bigr)\Bigr)$ for $\psi\in \End_{F}(A_1)$ and $a\in A_2$.

As the only equivalences we will consider in the sequel are support equivalences, we will use the term just {\it equivalence}.
\end{definition}

It is easy to see that each support equivalence of comodule algebras maps $H_1$-subcomodules to $H_2$-subcomodules.

As in the case of gradings, we can restrict our consideration to the case when $A_1=A_2$
and $\varphi$ is an identity map.

Let $A$ be an $H$-comodule algebra with a comodule map $\rho \colon A \to A \otimes H$ and the corresponding homomorphism of algebras $\zeta \colon H^* \to \End_F(A)$. Choose a basis $(a_\alpha)_\alpha$ in $A$ and let $\rho(a_\alpha)=\sum_{\beta} a_\beta \otimes h_{\beta\alpha}$ where $h_{\beta\alpha} \in H$.
Denote by $C(\rho)$ the $F$-linear span of all such $h_{\alpha\beta}$. Now since $(\rho\otimes \id_H)\rho=(\id_A \otimes 
\Delta_H)\rho$ and $(\id_A \otimes \varepsilon)\rho = \id_A$, our definition of $h_{\alpha\beta}$ implies
\begin{equation}\begin{split}\label{EqDeltahAHcomod}\Delta h_{\alpha\beta}= \sum_{\gamma}h_{\alpha\gamma} \otimes h_{\gamma\beta},\\
\varepsilon(h_{\alpha\beta}) = \left\lbrace\begin{array}{lll} 0 & \text{if} &
\alpha \ne \beta, \\
1 & \text{if} &
\alpha = \beta
\end{array}\right.\text{ for all }\alpha,\beta.
\end{split}\end{equation}
In particular,
$C(\rho)$ is a subcoalgebra of $H$. 
It is easy to see that $$\ker \zeta=C(\rho)^{\perp}:= \lbrace
\lambda\in H^* \mid \lambda(C(\rho))=0 \rbrace$$ and $\zeta(H^*)\cong C(\rho)^*$. In other words, $C(\rho)$ is the minimal subcoalgebra $C\subseteq H$
such that $\rho(A)\subseteq A \otimes C$.
\begin{definition}
Given an $H$-comodule algebra $A$ with coaction $\rho\colon A\to A\otimes H$, call the coalgebra $C(\rho)$ constructed above the {\it support coalgebra} of the coaction $\rho$.
\end{definition}
\begin{remark}
The support coalgebra was introduced by J.\,A.~Green in~\cite{GreenCoeff} 
under the name coefficient space. We prefer to use here the name ``support coalgebra'' as in the case of a grading it is exactly the linear span of the support of the grading.
\end{remark}

\begin{proposition}\label{PropositionHcomodEquivCriterion}
Let $A_i$ be $H_i$-comodule algebras for Hopf algebras $H_i$, $i=1,2$. Then an isomorphism
$\varphi \colon A_1 \mathrel{\widetilde\to} A_2$ of algebras is an equivalence of comodule algebra structures $\rho_i \colon A_i \to A_i\otimes H_i$, $i=1,2$, if and only if there exists an isomorphism $\tau \colon C(\rho_1) \mathrel{\widetilde\to} C(\rho_2)$ of coalgebras such that the following diagram is commutative:
\begin{equation}\label{EqC(rho)Equiv}
\xymatrix{ A_1 \ar[d]^{\varphi} \ar[r]^-{\rho_1} & A_1 \otimes C(\rho_1) \ar@<-1ex>[d]^{\varphi \otimes \tau} \\
A_2 \ar[r]^-{\rho_2} & A_2 \otimes C(\rho_2)}
\end{equation}
i.e.\ two comodule algebras are support equivalent if and only if they have isomorphic support coalgebras and they are isomorphic as comodules over their support coalgebra.
\end{proposition}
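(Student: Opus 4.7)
I would approach this proposition by treating the two implications separately, using the explicit description of $C(\rho)$ in terms of the ``matrix coefficients'' $h_{\beta\alpha}$ from a chosen basis (as set up in equation \eqref{EqDeltahAHcomod}) to transfer between the comodule-theoretic and the algebra-of-operators pictures.

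For the implication $(\Leftarrow)$ I would argue as follows. Assume a coalgebra isomorphism $\tau \colon C(\rho_1) \mathrel{\widetilde\to} C(\rho_2)$ making the diagram commute. Given $h^* \in H_2^*$, its restriction to $C(\rho_2)$ pulled back via $\tau$ yields a functional $h^* \circ \tau \in C(\rho_1)^*$, which extends (by elementary linear algebra) to some $g^* \in H_1^*$. Writing $\rho_1(a) = a_{(0)} \otimes a_{(1)}$ with $a_{(1)} \in C(\rho_1)$, the commutativity of \eqref{EqC(rho)Equiv} gives $\rho_2(\varphi(a)) = \varphi(a_{(0)}) \otimes \tau(a_{(1)})$, hence
\[
\zeta_2(h^*)\varphi(a) = h^*\bigl(\tau(a_{(1)})\bigr)\varphi(a_{(0)}) = g^*(a_{(1)})\varphi(a_{(0)}) = \varphi\bigl(\zeta_1(g^*)a\bigr) = \tilde\varphi\bigl(\zeta_1(g^*)\bigr)\varphi(a).
\]
This gives $\zeta_2(H_2^*) \subseteq \tilde\varphi(\zeta_1(H_1^*))$; the reverse inclusion is symmetric via $\tau^{-1}$.

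For $(\Rightarrow)$ I would build $\tau$ directly from bases. Choose a basis $(a_\alpha)$ of $A_1$, put $b_\alpha := \varphi(a_\alpha)$, and write $\rho_1(a_\alpha)=\sum_\beta a_\beta \otimes h^{(1)}_{\beta\alpha}$ and $\rho_2(b_\alpha)=\sum_\beta b_\beta \otimes h^{(2)}_{\beta\alpha}$. The elements $h^{(i)}_{\beta\alpha}$ span $C(\rho_i)$, so the only consistent definition is $\tau(h^{(1)}_{\beta\alpha}) := h^{(2)}_{\beta\alpha}$. The main obstacle, and the reason the hypothesis is needed, is showing this is well-defined: if $\sum c_{\beta\alpha} h^{(1)}_{\beta\alpha}=0$, one must conclude $\sum c_{\beta\alpha} h^{(2)}_{\beta\alpha}=0$. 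The assumption $\tilde\varphi(\zeta_1(H_1^*)) = \zeta_2(H_2^*)$ means that, after matching matrices of operators in the bases $(a_\alpha)$ and $(b_\alpha)$, for every $h^* \in H_2^*$ there exists $g^* \in H_1^*$ with $g^*(h^{(1)}_{\beta\alpha}) = h^*(h^{(2)}_{\beta\alpha})$ for all $\alpha,\beta$. Applying such a $g^*$ to the relation yields $h^*\bigl(\sum c_{\beta\alpha} h^{(2)}_{\beta\alpha}\bigr) = 0$; since every functional on $C(\rho_2)$ extends to $H_2$, this is enough to force the desired vanishing. Symmetrically, $\tau^{-1}$ is well-defined, so $\tau$ is a linear isomorphism.

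It remains to verify $\tau$ is a coalgebra isomorphism and that the diagram commutes. Both are immediate from the formulas \eqref{EqDeltahAHcomod}:
\[
(\tau \otimes \tau)\Delta h^{(1)}_{\alpha\beta} = \sum_\gamma h^{(2)}_{\alpha\gamma} \otimes h^{(2)}_{\gamma\beta} = \Delta h^{(2)}_{\alpha\beta} = \Delta\tau(h^{(1)}_{\alpha\beta}),
\]
and $\varepsilon(\tau(h^{(1)}_{\alpha\beta})) = \delta_{\alpha\beta} = \varepsilon(h^{(1)}_{\alpha\beta})$, while $(\varphi\otimes\tau)\rho_1(a_\alpha) = \sum_\beta b_\beta \otimes h^{(2)}_{\beta\alpha} = \rho_2(\varphi(a_\alpha))$ by construction. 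The key conceptual point throughout is the duality $\zeta_i(H_i^*) \cong C(\rho_i)^*$, which lets an algebra-level condition on images in $\End_F(A)$ be translated into a coalgebra-level condition on the supports; the only subtlety requiring care is using surjectivity of $H_i^* \twoheadrightarrow C(\rho_i)^*$ at the well-definedness step, so that no finiteness hypothesis on $A$ or on $H_i$ is needed.
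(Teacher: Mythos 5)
Your proof is correct and follows essentially the same route as the paper's: the forward direction establishes that the matrix coefficients $h^{(1)}_{\beta\alpha}$ and $h^{(2)}_{\beta\alpha}$ satisfy the same linear dependencies (the paper encodes this via auxiliary functionals $\tau_\alpha$ on $A$, you via the identity $g^*(h^{(1)}_{\beta\alpha})=h^*(h^{(2)}_{\beta\alpha})$, which amounts to the same computation), and the coalgebra property of $\tau$ and the commutativity of the diagram then follow from~\eqref{EqDeltahAHcomod}. Your converse direction is in fact slightly more detailed than the paper's one-line argument, spelling out the extension of $h^*\circ\tau$ from $C(\rho_1)$ to $H_1^*$.
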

\begin{proof}
Suppose $\varphi$ is a support equivalence of $\rho_1$ and $\rho_2$.
Choose as above some basis $(a_\alpha)_\alpha$ in $A_1$.
Let $a'_\alpha := \varphi(a_\alpha)$,
$\rho_1(a_\alpha)=\sum_\beta a_\beta \otimes h_{\beta\alpha}$
and 
$\rho_2(a'_\alpha)=\sum_\beta a'_\beta \otimes h'_{\beta\alpha}$,
$h_{\beta\alpha}\in H_1$, $h'_{\beta\alpha}\in H_2$.

Assume that $\sum_{\alpha,\beta} t_{\beta \alpha} h_{\beta \alpha }=0$
for some $t_{\beta\alpha}\in F$ where only a finite number of $t_{\beta \alpha}$
are nonzero. Define linear functions $\tau_\alpha \colon A \to F$
by $\tau_\alpha(a_\beta)= t_{\beta \alpha}$.
Then for any $\lambda \in H_1^*$
we have $$\sum_\alpha \tau_\alpha(\zeta_1(\lambda)a_\alpha) = \sum_{\alpha,\beta} 
t_{\beta \alpha} \lambda(h_{\beta \alpha })=0.$$
Hence $$\sum_\alpha \tau_\alpha\Bigl(\varphi^{-1}\bigl(\tilde\varphi\left(\zeta_1(\lambda)\right)a'_\alpha\bigr)\Bigr) =0
\text{ for all }\lambda \in H_1^*.$$ Now
(\ref{EqImagesOfHistarCoincide}) implies
$$\sum_\alpha \tau_\alpha\Bigl(\varphi^{-1}\bigl(\zeta_2(\lambda')a'_\alpha\bigr)\Bigr) =0$$
and $$\sum_{\alpha,\beta} 
t_{\beta \alpha} \lambda'(h'_{\beta \alpha })=0
\text{ for all }\lambda' \in H_2^*.$$
As a consequence, $\sum_{\alpha,\beta} t_{\beta \alpha} h'_{\beta \alpha }=0$.
Applying the same argument for $\varphi^{-1}$,
we obtain that if $\sum_{\alpha,\beta} t_{\beta \alpha} h'_{\beta \alpha }=0$
for some $t_{\beta\alpha}\in F$, then $\sum_{\alpha,\beta} t_{\beta \alpha} h_{\beta \alpha }=0$.
Hence there are the same linear dependencies among $h_{\alpha\beta}$
and among $h'_{\alpha\beta}$.
Taking into account that $C(\rho_1)=\langle h_{\alpha\beta} \mid \alpha,\beta \rangle_F$
and $C(\rho_2)=\langle h'_{\alpha\beta} \mid \alpha,\beta \rangle_F$,
we can correctly define a linear map 
 $\tau \colon C(\rho_1) \to C(\rho_2)$ by $\tau(h_{\alpha\beta})=h'_{\alpha\beta}$ for all $\alpha,\beta$
 and the reverse map $\tau^{-1} \colon C(\rho_2) \to C(\rho_1)$ by $\tau^{-1}(h'_{\alpha\beta})=h_{\alpha\beta}$ for all $\alpha,\beta$. In particular, the map $\tau$ is a bijection.
By~(\ref{EqDeltahAHcomod}) the map $\tau$ is a homomorphism of coalgebras.
Finally, (\ref{EqC(rho)Equiv}) holds.

Conversely, suppose~(\ref{EqC(rho)Equiv}) holds. Then $\zeta_i(\lambda)$
for $\lambda\in H_i^*$ is determined by the values 
of $\lambda$ on $C(\rho_i)$. Now~(\ref{EqC(rho)Equiv}) implies~(\ref{EqImagesOfHistarCoincide}).
\end{proof}

\begin{remark}
The proof of Proposition~\ref{PropositionHcomodEquivCriterion} makes in fact no use of the algebra structures of $A_i$ and $H_i$, nor of the coalgebra structures of $H_i$. In fact, it is possible to define a notion of support equivalence for 
arbitrary linear maps $A \to B \otimes Q$ where $A,B,Q$ are just vector spaces, and in such a setting Proposition~\ref{PropositionHcomodEquivCriterion} remains valid mutatis mutandis (see \cite[Proposition 2.6]{ASGordienko21ALAgoreJVercruysse}).
\end{remark}

\subsection{Universal Hopf algebra of a comodule algebra structure}
\label{SectionHcomodUniversalHopfAlgebra}

Analogously to the universal group of a grading, we would like to introduce
the universal Hopf algebra of a given comodule algebra structure.
It will be the initial object of the category $\mathcal C_A^H$
defined below.

Let $A$ be an $H$-comodule algebra for a Hopf algebra $H$ and let $\zeta \colon H^* \to \End_F(A)$ be the corresponding algebra homomorphism.
Consider the category $\mathcal C_A^H$ where 
\begin{enumerate}
\item the objects are
$H_1$-comodule algebra structures on the algebra $A$ for arbitrary
Hopf algebras $H_1$ over $F$ such that $\zeta_1(H_1^*)=\zeta(H^*)$
where $\zeta_1 \colon H_1^* \to \End_F(A)$
is the algebra homomorphism corresponding to the $H_1$-comodule algebra structure on $A$;
\item the morphisms from an $H_1$-comodule algebra structure on $A$ 
with the corresponding homomorphism $\zeta_1$
to an $H_2$-comodule algebra structure with the corresponding homomorphism $\zeta_2$
are all Hopf algebra homomorphisms $\tau \colon H_1 \to H_2$
such that the following diagram is commutative:
\begin{equation}\label{EqDiagramCatAComod}\xymatrix{ \End_{F}(A)   & H_2^* \ar[l]_(0.3){\zeta_2}\ar[d]^{\tau^*} \\
& H_1^*\ar[lu]^{\zeta_1}
}
\end{equation}
\end{enumerate}
\begin{remark}
The commutative diagram~(\ref{EqDiagramCatAComod}) means that $$\zeta_2(h^*)(a)=\zeta_1(\tau^*(h^*))(a),$$
$$h^*(a_{[1]})a_{[0]}=h^*(\tau(a_{(1)}))a_{(0)}$$ for all $h^*\in H_2$
and $a\in A$ where $\rho_1(a)=a_{(0)}\otimes a_{(1)}$ and $\rho_2(a)=a_{[0]}\otimes a_{[1]}$ are the $H_1$- and $H_2$-comodule maps on $A$, respectively.
Hence $$a_{[0]} \otimes a_{[1]}=a_{(0)} \otimes \tau(a_{(1)})$$
for all $a\in A$
and~(\ref{EqDiagramCatAComod}) is equivalent to the diagram below:
\begin{equation*}\xymatrix{ A  \ar[r]^(0.4){\rho_1} \ar[rd]_{\rho_2}& A \otimes H_1 \ar[d]^{\id_A \otimes \tau} \\
& A\otimes H_2
}
\end{equation*}

\end{remark}

\medskip

Now we are going to show that $\mathcal C_A^H$ possesses an initial object
(which is always unique up to an isomorphism).

Recall that by $L$ we denote the left adjoint functor to the forgetful functor $U\colon \mathbf{Hopf}_F \to \mathbf{Coalg}_F$.
Let $\rho \colon A \to A \otimes H$
be the map defining a right $H$-comodule structure on $A$. 
Since $C(\rho) \subseteq H$ is an embedding, $\eta_{C(\rho)} \colon C(\rho) \to L(C(\rho))$
is an embedding too. Hence for our choice of $C(\rho)$ the algebra $A$ is a right $C(\rho)$-comodule and therefore a right $L(C(\rho))$-comodule. Now we will factor $L(C(\rho))$ by a specific Hopf ideal $I$ to turn $A$ into an $L(C(\rho))/I$-comodule algebra.

Choose a basis $(a_\alpha)_\alpha$ in $A$ and $h_{\alpha\beta}$
as in the previous section. 

\begin{lemma}
Let $a_\alpha a_\beta = \sum_{v} k_{\alpha\beta}^v a_v$
for some structure constants $k_{\alpha\beta}^v \in F$ and
denote by $I_0$ the ideal of $L(C(\rho))$ generated by
\begin{equation}\label{EqDefRelIUnivComod}\sum_{r,q} k^\gamma_{rq} \eta_{C(\rho)}(h_{r\alpha})\eta_{C(\rho)}(h_{q\beta}) - \sum_{u} k^{u}_{\alpha\beta}\eta_{C(\rho)} (h_{\gamma u})\end{equation} for all possible choices of indices $\alpha,\beta,\gamma$. Then $I_{0}$ is a coideal.
\end{lemma}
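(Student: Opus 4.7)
To show that $I_0$ is a coideal of $L(C(\rho))$, I need to verify the two conditions $\varepsilon(I_0)=0$ and $\Delta(I_0)\subseteq I_0\otimes L(C(\rho))+L(C(\rho))\otimes I_0$. Since $\Delta$ and $\varepsilon$ are algebra homomorphisms and $I_0$ is a two-sided algebra ideal, both conditions only need to be checked on the generators
$$g_{\alpha\beta\gamma}:=\sum_{r,q} k^\gamma_{rq}\,\eta(h_{r\alpha})\eta(h_{q\beta}) - \sum_{u} k^{u}_{\alpha\beta}\,\eta(h_{\gamma u}),$$
where I write $\eta$ for $\eta_{C(\rho)}$. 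The counit computation is immediate: since $\eta$ is a coalgebra map, $\varepsilon(\eta(h_{\alpha\beta}))=\varepsilon(h_{\alpha\beta})=\delta_{\alpha\beta}$ by~(\ref{EqDeltahAHcomod}), so $\varepsilon(g_{\alpha\beta\gamma})=k^\gamma_{\alpha\beta}-k^\gamma_{\alpha\beta}=0$.

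For the comultiplication, write $R_{\alpha\beta\gamma}:=\sum_{r,q}k^\gamma_{rq}\,\eta(h_{r\alpha})\eta(h_{q\beta})$ and $S_{\alpha\beta\gamma}:=\sum_u k^u_{\alpha\beta}\,\eta(h_{\gamma u})$, so that $g_{\alpha\beta\gamma}=R_{\alpha\beta\gamma}-S_{\alpha\beta\gamma}\in I_0$. Using that $\eta$ is a coalgebra map and applying~(\ref{EqDeltahAHcomod}) together with the multiplicativity of $\Delta$ on $L(C(\rho))$, I compute
$$\Delta R_{\alpha\beta\gamma} = \sum_{s,t} R_{st\gamma} \otimes \eta(h_{s\alpha})\eta(h_{t\beta}),$$
after recognizing the inner sum $\sum_{r,q}k^\gamma_{rq}\eta(h_{rs})\eta(h_{qt})$ as $R_{st\gamma}$ (relabelling). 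Reducing each $R_{st\gamma}\equiv S_{st\gamma}\pmod{I_0}$ in the first tensor slot yields
$$\Delta R_{\alpha\beta\gamma} \equiv \sum_{u} \eta(h_{\gamma u})\otimes R_{\alpha\beta u}\pmod{I_0\otimes L(C(\rho))},$$
and then reducing $R_{\alpha\beta u}\equiv S_{\alpha\beta u}\pmod{I_0}$ in the second slot gives
$$\Delta R_{\alpha\beta\gamma} \equiv \sum_{u,v} k^v_{\alpha\beta}\,\eta(h_{\gamma u})\otimes \eta(h_{uv})\pmod{I_0\otimes L(C(\rho))+L(C(\rho))\otimes I_0}.$$
A direct computation of $\Delta S_{\alpha\beta\gamma}=\sum_{u,v}k^u_{\alpha\beta}\eta(h_{\gamma v})\otimes\eta(h_{vu})$, followed by a harmless swap of the dummy indices $u$ and $v$, produces exactly the same expression. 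Hence $\Delta g_{\alpha\beta\gamma}\in I_0\otimes L(C(\rho))+L(C(\rho))\otimes I_0$.

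To conclude, I promote these facts from generators to the whole ideal: for $x,y\in L(C(\rho))$ and $g\in I_0$, multiplicativity of $\Delta$ and the fact that $I_0$ is two-sided give $\Delta(xgy)=x_{(1)}g_{(1)}y_{(1)}\otimes x_{(2)}g_{(2)}y_{(2)}\in I_0\otimes L(C(\rho))+L(C(\rho))\otimes I_0$, and similarly $\varepsilon(xgy)=\varepsilon(x)\varepsilon(g)\varepsilon(y)=0$. The main obstacle is only bookkeeping: organizing the quadruple sum in $\Delta R_{\alpha\beta\gamma}$ so that the generator $g_{\bullet\bullet\bullet}$ can be extracted in each tensor factor successively. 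This is precisely the combinatorial reflection of the fact that the relations~(\ref{EqDefRelIUnivComod}) encode algebra-homomorphism conditions that are forced to be coaction-compatible by the matrix coalgebra identity $\Delta h_{\alpha\beta}=\sum_\gamma h_{\alpha\gamma}\otimes h_{\gamma\beta}$.
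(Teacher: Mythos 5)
Your proof is correct and follows essentially the same route as the paper: the counit check is identical, and your two successive reductions modulo $I_0$ (first tensor slot, then second) are exactly the paper's add-and-subtract regrouping of $\Delta g_{\alpha\beta\gamma}$ into a sum of (generator) $\otimes$ (element) plus (element) $\otimes$ (generator). The concluding remark that both conditions pass from generators to the full ideal because $\Delta$ and $\varepsilon$ are algebra maps is a point the paper leaves implicit, but it is the standard justification.
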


\begin{proof} First of all, note that for every $\alpha,\beta,\gamma$
we have $$\varepsilon\left(\sum_{r,q} k^\gamma_{rq} \eta_{C(\rho)}(h_{r\alpha})\eta_{C(\rho)}(h_{q\beta}) - \sum_{u} k^{u}_{\alpha\beta}\eta_{C(\rho)} (h_{\gamma u})
\right)=k^\gamma_{\alpha\beta}-k^\gamma_{\alpha\beta}=0$$
and therefore $\varepsilon(I_{0})=0$.
Moreover, a direct computation using \eqref{EqDeltahAHcomod} gives
\begin{equation*} \begin{split}
&\Delta \biggl(\sum_{r,q} k^\gamma_{rq} \eta_{C(\rho)}(h_{r\alpha})\eta_{C(\rho)}(h_{q\beta}) - \sum_{u} k^{u}_{\alpha\beta}\eta_{C(\rho)} (h_{\gamma u})\biggl)\\
& \stackrel{\eqref{EqDeltahAHcomod}} {=} \sum_{r,q,a,b} k^\gamma_{rq}\, \eta_{C(\rho)}(h_{ra}) \eta_{C(\rho)}(h_{qb}) \otimes \eta_{C(\rho)}(h_{a \alpha}) \eta_{C(\rho)} (h_{b \beta}) - \sum_{u,v} k^{u}_{\alpha\beta}\, \eta_{C(\rho)}(h_{\gamma v}) \otimes \eta_{C(\rho)}(h_{vu})\\
&= \sum_{a,b} \biggl(\underline{\sum_{r,q} k^\gamma_{rq} \eta_{C(\rho)}(h_{ra}) \eta_{C(\rho)}(h_{qb}) - \sum_{v} k^{v}_{ab}  \eta_{C(\rho)} (h_{\gamma v})}\biggr) \otimes \eta_{C(\rho)}(h_{a \alpha}) \eta_{C(\rho)} (h_{b \beta}) +\\
& \quad \sum_{v} \Biggl( \eta_{C(\rho)} (h_{\gamma v}) \otimes \biggl(\underline{\sum_{a,b} k^{v}_{ab} \eta_{C(\rho)}(h_{a \alpha}) \eta_{C(\rho)} (h_{b \beta}) - \sum_{u} k^{u}_{\alpha \beta} \eta_{C(\rho)}(h_{v u})} \biggr) \Biggr)
\end{split}\end{equation*}
and obviously the underlined terms belong to $I_{0}$, as desired.
\end{proof}

Now define $I$
to be the ideal generated by spaces $S^n I_0$
for all $n\in\mathbb N$ where $S$ is the antipode of $L(C(\rho))$.
Obviously, $I$ is a Hopf ideal and $H^{\rho}:=L(C(\rho))/I$
is a Hopf algebra.

Denote by $\bar \eta \colon C(\rho) \to L(C(\rho))/I$ the map induced by $\eta_{C(\rho)}$
and define an $H^{\rho}$-comodule algebra structure $\varkappa^\rho$ on $A$ 
by $\varkappa^\rho(a_\alpha):= \sum_\beta a_\beta \otimes \bar\eta(h_{\beta\alpha})$.
The relations~\eqref{EqDefRelIUnivComod} ensure that $A$ indeed becomes
an $H^{\rho}$-comodule algebra.

\begin{theorem}\label{TheoremUniversalHcomodExistence}
The pair $(H^\rho,\varkappa^\rho)$
is the initial object of the category $\mathcal C_A^H$.
\end{theorem}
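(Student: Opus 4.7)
The plan is to establish three things: (i) $(H^\rho,\varkappa^\rho)$ really is an object of $\mathcal C_A^H$; (ii) for every object $(H_1,\rho_1)\in\mathcal C_A^H$ there is a Hopf algebra morphism $\tau\colon H^\rho\to H_1$ making diagram~\eqref{EqDiagramCatAComod} commute; (iii) $\tau$ is unique. I would attack (ii) first, since it is the heart of the argument and delivers (i) as a by-product.

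Given $(H_1,\rho_1)\in\mathcal C_A^H$, Proposition~\ref{PropositionHcomodEquivCriterion} applied with $A_1=A_2=A$ and $\varphi=\id_A$ (which is legitimate precisely because of the object condition $\zeta_1(H_1^*)=\zeta(H^*)$) produces a coalgebra isomorphism $\sigma\colon C(\rho)\mathrel{\widetilde\to}C(\rho_1)\subseteq H_1$ with $(\id_A\otimes\sigma)\rho=\rho_1$. The adjunction $L\dashv U$ then lifts the composite coalgebra map $C(\rho)\to H_1$ to a unique Hopf algebra homomorphism $\tilde\tau\colon L(C(\rho))\to H_1$. To descend $\tilde\tau$ to $\tau\colon H^\rho=L(C(\rho))/I\to H_1$, it suffices to show $\tilde\tau(I)=0$, and since $\tilde\tau$ intertwines the antipodes of $L(C(\rho))$ and $H_1$, this reduces to checking $\tilde\tau(I_0)=0$. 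Writing $h'_{\alpha\beta}:=\sigma(h_{\alpha\beta})\in H_1$, the image of a generator~\eqref{EqDefRelIUnivComod} under $\tilde\tau$ is
\[
\sum_{r,q}k^\gamma_{rq}\,h'_{r\alpha}h'_{q\beta}\,-\,\sum_u k^u_{\alpha\beta}\,h'_{\gamma u},
\]
which vanishes: expanding $\rho_1(a_\alpha a_\beta)=\rho_1(a_\alpha)\rho_1(a_\beta)$ in the basis $(a_\alpha)$ and comparing $a_\gamma$-components produces exactly this identity in $H_1$. The required commutativity $(\id_A\otimes\tau)\varkappa^\rho=\rho_1$ then holds on basis elements and hence everywhere.

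To confirm (i), I would run the above construction with $(H_1,\rho_1)=(H,\rho)$ itself, obtaining a Hopf algebra map $H^\rho\to H$ sending each $\bar\eta(h_{\beta\alpha})$ to $h_{\beta\alpha}$. This forces the restriction $\bar\eta|_{C(\rho)}\colon C(\rho)\to H^\rho$ to be injective, and hence the restriction map $(H^\rho)^*\to C(\rho)^*$, $\lambda\mapsto\lambda\circ\bar\eta$, is surjective. Combined with the direct computation that the homomorphism $\zeta^{\varkappa^\rho}\colon (H^\rho)^*\to\End_F(A)$ induced by $\varkappa^\rho$ satisfies $\zeta^{\varkappa^\rho}(\lambda)=\zeta(\lambda\circ\bar\eta)$, this yields $\zeta^{\varkappa^\rho}((H^\rho)^*)=\zeta(H^*)$, so $(H^\rho,\varkappa^\rho)\in\mathcal C_A^H$. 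For (iii), commutativity of~\eqref{EqDiagramCatAComod} forces $\tau(\bar\eta(h_{\beta\alpha}))=h'_{\beta\alpha}$, determining $\tau$ on $\bar\eta(C(\rho))$; since any Hopf algebra homomorphism out of $L(C(\rho))$ is uniquely determined by its restriction to $\eta_{C(\rho)}(C(\rho))$, the induced $\tau$ on the quotient $H^\rho$ is uniquely determined.

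The principal obstacle is the factorisation of $\tilde\tau$ through $I$. While $\tilde\tau(I_0)=0$ reduces cleanly to the multiplicativity of $\rho_1$, the ideal $I$ is obtained by further closing $I_0$ under iterated applications of the antipode of $L(C(\rho))$; handling these higher generators requires invoking the antipode-compatibility of $\tilde\tau$, which, although automatic for a Hopf algebra homomorphism, must not be overlooked. A secondary subtlety is that the object property (i) is not entirely free: it is best derived, as above, by running the construction of $\tau$ with $(H_1,\rho_1)=(H,\rho)$ to force the injectivity of $\bar\eta$ on $C(\rho)$.
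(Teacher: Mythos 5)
Your proposal is correct and follows essentially the same route as the paper: it uses Proposition~\ref{PropositionHcomodEquivCriterion} to produce the coalgebra isomorphism $C(\rho)\mathrel{\widetilde\to}C(\rho_1)$, lifts it through the adjunction $L\dashv U$, kills the generators of $I$ using multiplicativity of $\rho_1$ (and the antipode-compatibility of the lift for the $S^nI_0$ part, which the paper leaves implicit), obtains membership of $(H^\rho,\varkappa^\rho)$ in $\mathcal C_A^H$ from the injectivity of $\bar\eta$ forced by the comparison map to $H$ itself, and derives uniqueness from the fact that a Hopf algebra map out of $L(C(\rho))$ is determined on $\eta_{C(\rho)}(C(\rho))$. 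The only differences are cosmetic (e.g.\ you verify the object condition via surjectivity of the restriction $(H^\rho)^*\to C(\rho)^*$ rather than by citing Proposition~\ref{PropositionHcomodEquivCriterion} a second time).
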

\begin{proof}
We first notice that the embedding $C(\rho) \hookrightarrow H$
induces a homomorphism of Hopf algebras $\varphi
\colon L(C(\rho)) \to H$ 
such that the diagram
\begin{equation*}\xymatrix{ C(\rho)  \ar[r]^(0.45){\eta_{C(\rho)}} \ar[rd]^{}& L(C(\rho)) \ar[d]^{\varphi} \\
& H
}
\end{equation*}
is commutative. Since $A$ is an $H$-comodule algebra,
the generators~(\ref{EqDefRelIUnivComod}) of $I$ belong to the kernel of $\varphi$
and there exists a homomorphism of Hopf algebras $\bar\varphi
\colon L(C(\rho))/I \to H$ such that the diagram
\begin{equation*}\xymatrix{ C(\rho)  \ar[r]^(0.4){\bar\eta} \ar[rd]^{}& L(C(\rho))/I \ar[d]^{\bar\varphi} \\
& H
}
\end{equation*}
is commutative. 
Hence the map $\bar\eta$ is injective and therefore $\bar\eta \colon C(\rho) \to \bar\eta(C(\rho))$ is a coalgebra isomorphism. Since $\bar\eta(C(\rho))$ is the support coalgebra
of $\varkappa^\rho$, Proposition~\ref{PropositionHcomodEquivCriterion} implies that the structure of an
$H^\rho$-comodule algebra on $A$ defined by $\varkappa^\rho$
belongs to $\mathcal C_A^H$.

Suppose now that $A$ is an $H_1$-comodule algebra
for some other Hopf algebra $H_1$
and the corresponding comodule structure $\rho_1 \colon A_1 \to A_1 \otimes H_1$ is equivalent to $\rho$.
Then by Proposition~\ref{PropositionHcomodEquivCriterion}
there exists an isomorphism $\tau_0 \colon C(\rho) \mathrel{\widetilde\to}
C(\rho_1)$ such that the following diagram is commutative:
\begin{equation*} 
\xymatrix{ A \ar[rd]_-{\rho_1} \ar[r]^-{\rho} & A \otimes C(\rho) \ar@<-1ex>[d]^{\id_A \otimes \tau_0} \\
 & A \otimes C(\rho_1)}
\end{equation*}

Let $i_1$ be the embedding $C(\rho_1)\subseteq H_1$.
Then $i_1\tau_0 = \tau_1 \eta_{C(\rho)}$ for a unique homomorphism of Hopf algebras $\tau_1 \colon L(C(\rho))\to H_1$.
Note that since $A$ is an $H_1$-comodule algebra, again all generators~(\ref{EqDefRelIUnivComod}) of the ideal $I$ are in the kernel. Hence we get a homomorphism of Hopf algebras $\tau \colon L(C(\rho))/I\to H_1$ providing the desired arrow in $\mathcal C_A^{H}$.
This arrow is unique since, by Takeuchi's construction of the functor $L$ (see \cite{Takeuchi}), $L(C(\rho))/I$ is generated as an algebra by $\bar\eta(h_{\alpha\beta})$
and their images under $S$.
\end{proof}

We call the pair $\bigl(H^\rho, \varkappa^\rho\bigr)$ the \textit{universal Hopf algebra} of $\rho$.

\begin{remark}
If $\rho \colon A \to A\otimes H$ is a right $H$-comodule algebra structure on $A$ such that $C(\rho)$ is a pointed coalgebra then the corresponding universal Hopf algebra of $\rho$ is pointed as well. Indeed, it follows from \cite[Proposition 3.3]{Zhuang} that $L(C(\rho))$ is pointed and its coradical is given by $L(C(\rho))_{0} = F T$ where $T=\mathcal F_{G(C(\rho))}$ is the free group generated by the set $G(C(\rho))$ of group-like  elements of $C(\rho)$. Now since the canonical projection $\pi \colon L(C(\rho)) \to L(C(\rho))/I$ is a surjective coalgebra homomorphism it follows from \cite[Exercise 5.5.2]{Danara} or \cite[Corollary 5.3.5]{Montgomery} that $L(C(\rho))/I$ is pointed and its coradical is $\pi\bigl(F T\bigl)$. 
\end{remark}

\begin{theorem}\label{TheoremHopfUnivComodUnitality}
Suppose $A$ is a unital $H$-comodule algebra. Then $A$ is a unital $H^\rho$-comodule coalgebra too.
As a consequence, unital comodule structures can be equivalent only to
unital comodule structures.
\end{theorem}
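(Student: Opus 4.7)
The main task is to prove $\varkappa^\rho(1_A) = 1_A \otimes 1_{H^\rho}$; the ``as a consequence'' clause will then follow painlessly from the universal property. To start, I would choose the basis $(a_\alpha)_\alpha$ of $A$ so that one of the basis elements, say $a_{\alpha_0}$, equals $1_A$. The unitality condition $\rho(1_A) = 1_A \otimes 1_H$ then forces $h_{\beta \alpha_0} = \delta_{\beta \alpha_0}\,1_H$; in particular $1_H \in C(\rho)$ and is group-like in this support coalgebra. A direct computation gives $\varkappa^\rho(1_A) = \sum_\beta a_\beta \otimes \bar\eta(h_{\beta\alpha_0}) = 1_A \otimes \bar\eta(1_H)$, reducing the problem to showing $\bar\eta(1_H) = 1_{H^\rho}$ in the quotient $H^\rho = L(C(\rho))/I$.

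The core step is to exploit the defining relations \eqref{EqDefRelIUnivComod} of $I_0$. Setting $\beta = \alpha_0$ and using $a_\alpha \cdot 1_A = a_\alpha$ (which gives $k^\gamma_{r\alpha_0} = \delta_{\gamma r}$ and $k^u_{\alpha\alpha_0} = \delta_{u\alpha}$) together with $h_{q\alpha_0} = \delta_{q\alpha_0}\,1_H$, the generating relation of $I_0$ collapses inside $H^\rho$ to $\bar\eta(h_{\gamma\alpha})\,\bar\eta(1_H) = \bar\eta(h_{\gamma\alpha})$; symmetrically, setting $\alpha = \alpha_0$ yields $\bar\eta(1_H)\,\bar\eta(h_{\gamma\beta}) = \bar\eta(h_{\gamma\beta})$. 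Hence $\bar\eta(1_H)$ is a two-sided identity on every element of $\bar\eta(C(\rho))$. To extend this to all of $H^\rho$, I would use that $1_H$ is group-like in $C(\rho)$, so $\bar\eta(1_H)$ is group-like in $H^\rho$ and hence invertible with inverse $S(\bar\eta(1_H))$. Applying $S$ to $\bar\eta(1_H)\bar\eta(h) = \bar\eta(h)$ and then multiplying on the right by $\bar\eta(1_H)$ shows that $\bar\eta(1_H)$ is a two-sided identity on each $S(\bar\eta(h))$ as well. Since by Takeuchi's construction of $L$ the Hopf algebra $H^\rho$ is generated as an algebra by $\bar\eta(C(\rho)) \cup S(\bar\eta(C(\rho)))$, associativity propagates the identity to all of $H^\rho$; evaluating at $1_{H^\rho}$ then gives $\bar\eta(1_H) = 1_{H^\rho}$, as desired.

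For the consequence, suppose $\rho' \colon A \to A \otimes H'$ is any comodule algebra structure support equivalent to $\rho$ (after identifying the underlying algebras via the relevant algebra isomorphism, which automatically sends $1_A$ to $1_A$). Then $\rho$ and $\rho'$ both belong to the category $\mathcal C_A^H$, and Theorem~\ref{TheoremUniversalHcomodExistence} supplies a Hopf algebra morphism $\tau \colon H^\rho \to H'$ with $\rho' = (\id_A \otimes \tau) \circ \varkappa^\rho$. Since $\tau$ is a Hopf algebra homomorphism, it preserves units, so $\rho'(1_A) = 1_A \otimes \tau(1_{H^\rho}) = 1_A \otimes 1_{H'}$, and $\rho'$ is unital.

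The main obstacle I anticipate is precisely the bootstrap in the second paragraph: the relations in $I_0$ a priori only control multiplication on the image of $\eta$, and the step from ``$\bar\eta(1_H)$ is an identity on $\bar\eta(C(\rho))$'' to ``$\bar\eta(1_H) = 1_{H^\rho}$'' rests on the group-like nature of $1_H$, and thus the Hopf-algebraic invertibility of $\bar\eta(1_H)$, which is what allows one to reach also the antipodal generators.
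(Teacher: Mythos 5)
Your proof is correct and follows essentially the same route as the paper's: include $1_A$ in the basis, specialize the defining relations~\eqref{EqDefRelIUnivComod} at the unit index to see that $\bar\eta(1_H)$ is a two-sided identity on $\bar\eta(C(\rho))$, use its group-likeness to invert it via the antipode, and propagate to the antipodal generators before concluding $\bar\eta(1_H)=1_{H^\rho}$ by generation. The one imprecision is that Takeuchi's $L(C(\rho))$ is generated as an algebra by all iterated images $S^k(\bar\eta(C(\rho)))$, $k\geqslant 0$, not just $k\leqslant 1$; your bootstrap (apply $S$, then multiply by $\bar\eta(1_H)$ using $S(\bar\eta(1_H))\bar\eta(1_H)=1_{H^\rho}$) iterates verbatim, and that induction on $k$ is exactly what the paper writes out.
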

\begin{proof}
We can include $1_A$ into a basis, say, $a_1 := 1_A$. We have
$$h_{\alpha 1} = \left\lbrace\begin{array}{lll} 0 & \text{if} &
\alpha \ne 1, \\
1_H & \text{if} &
\alpha = 1
\end{array}\right. $$
and
$$k^{\beta}_{\alpha 1}=k^{\beta}_{1\alpha} = \left\lbrace\begin{array}{lll} 0 & \text{if} &
\alpha \ne \beta, \\
1_F & \text{if} &
\alpha = \beta
\end{array}\right. $$
for all $\alpha,\beta$. Moreover, $\varepsilon(h_{11})=1$
and $\Delta h_{11}=h_{11}\otimes h_{11}$.
Now~(\ref{EqDefRelIUnivComod})
for $\alpha = 1$ implies $\bar\eta(h_{11})\bar\eta(h_{\gamma\beta})=\bar\eta(h_{\gamma\beta})$ and for $\beta=1$ implies
$\bar\eta(h_{\gamma\alpha})\bar\eta(h_{11})=\bar\eta(h_{\gamma\alpha})$.

Note that $$S( \bar\eta(h_{11})) \bar\eta(h_{11})=  \bar\eta(h_{11})S( \bar\eta(h_{11})) = 1_{H^\rho}$$ since
 $\Delta \bar\eta(h_{11})=\bar\eta(h_{11})\otimes\bar\eta(h_{11})$.
 
Using induction on $k$ (the base $k=0$ has been already proven), 
we get
\begin{equation*}\begin{split}S^k(\bar\eta(h_{\gamma\beta}))
\bar\eta(h_{11}) = S\left( \bar\eta(h_{11}) S^{k-1}(\bar\eta(h_{\gamma\beta})) \right)
\bar\eta(h_{11}) \\ = S^k(\bar\eta(h_{\gamma\beta}))S( \bar\eta(h_{11})) \bar\eta(h_{11}) = S^k(\bar\eta(h_{\gamma\beta})).\end{split}\end{equation*}
The equality $\bar\eta(h_{11}) S^k(\bar\eta(h_{\gamma\beta})) = S^k(\bar\eta(h_{\gamma\beta}))$
is proven analogously.
In other words, $\bar\eta(h_{11})$ is the identity element of 
$H^\rho$ and $A$ is a unital $H^\rho$-comodule algebra.
\end{proof}

Theorem~\ref{TheoremUniversalHopfOfAGradingIsJustUniversalGroup}
below shows that in the case of gradings the construction above
yields the group algebra of the universal group of the corresponding grading.

\begin{theorem}\label{TheoremUniversalHopfOfAGradingIsJustUniversalGroup}
Let $\Gamma \colon A = \bigoplus_{g\in G} A^{(g)}$ be a grading on an algebra $A$ by a group $G$. Denote by $\rho \colon A \to A \otimes FG$ the corresponding comodule map.
Let $G_\Gamma$ be the universal group of $\Gamma$ and let $\rho_\Gamma \colon A \to A \otimes FG_\Gamma$
be the corresponding comodule map.
Then $(FG_\Gamma, \rho_\Gamma)$ is the universal Hopf algebra of the comodule structure $\rho$.
\end{theorem}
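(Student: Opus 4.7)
The plan is to identify the Hopf algebra $H^\rho = L(C(\rho))/I$ constructed in Theorem~\ref{TheoremUniversalHcomodExistence} with $FG_\Gamma$ and to check that under this identification $\varkappa^\rho$ becomes $\rho_\Gamma$; by uniqueness of the initial object of $\mathcal C_A^H$, this yields the claim.

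I would start by choosing a basis $(a_\alpha)$ of $A$ consisting of homogeneous elements, say $a_\alpha \in A^{(g_\alpha)}$ with $g_\alpha \in \supp\Gamma$. Then $\rho(a_\alpha) = a_\alpha \otimes g_\alpha$, so $h_{\beta\alpha} = \delta_{\beta\alpha}\, g_\alpha$ and the support coalgebra $C(\rho) = \bigoplus_{g \in \supp\Gamma} Fg$ has a basis of grouplike elements. By Takeuchi's explicit description~\cite{Takeuchi} of the free Hopf algebra functor $L$, applied to such a coalgebra the left adjoint produces the group Hopf algebra $L(C(\rho)) = F\mathcal F_{[\supp\Gamma]}$ on the free group generated by the symbols $[g]$, $g \in \supp\Gamma$, with $\eta_{C(\rho)}$ sending the grouplike $g$ to $[g]$.

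Next I would compute the defining relations~(\ref{EqDefRelIUnivComod}) of the ideal $I_0$ in this setting. With $h_{r\alpha} = \delta_{r\alpha}\, g_\alpha$, each generator collapses to $k^\gamma_{\alpha\beta}\bigl([g_\alpha][g_\beta] - [g_\gamma]\bigr)$. This is nonzero only when $k^\gamma_{\alpha\beta} \neq 0$, in which case homogeneity forces $g_\gamma = g_\alpha g_\beta$, so the generator reduces to $[g][h] - [gh]$ for $g = g_\alpha$, $h = g_\beta \in \supp\Gamma$ with $A^{(g)}A^{(h)} \neq 0$. Writing $[g][h] - [gh] = -[gh]\bigl([gh]^{-1}[g][h] - 1\bigr)$, one sees that the two-sided ideal $I_0$ coincides with the kernel of the canonical projection $F\mathcal F_{[\supp\Gamma]} \twoheadrightarrow F(\mathcal F_{[\supp\Gamma]}/N)$, where $N$ is precisely the normal closure described in Remark~b); so $L(C(\rho))/I_0 \cong FG_\Gamma$ as algebras.

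The remaining step is to show $I = I_0$, i.e.\ that $I_0$ is already a Hopf ideal and that no antipode image $S^n I_0$ contributes new relations. A direct calculation gives $S([g][h]-[gh]) = [h]^{-1}[g]^{-1} - [gh]^{-1} = -[h]^{-1}[g]^{-1}[gh]^{-1}\bigl([g][h]-[gh]\bigr)$, which lies in $I_0$, so $S(I_0) \subseteq I_0$; alternatively, $F\mathcal F_{[\supp\Gamma]}/I_0 \cong FG_\Gamma$ is manifestly a Hopf algebra, forcing $I = I_0$. Once this identification is made, $\varkappa^\rho(a_\alpha) = a_\alpha \otimes \bar\eta(g_\alpha)$ corresponds under the isomorphism to $a_\alpha \otimes \varkappa_\Gamma(g_\alpha) = \rho_\Gamma(a_\alpha)$. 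The most delicate point that I anticipate is the verification that the purely algebra-theoretic two-sided ideal generated by $[g][h]-[gh]$ exactly cuts out the normal closure $N$ at the group level, since a priori a two-sided ideal in a group algebra can be larger than the kernel of a group-algebra quotient; the algebraic manipulation above shows that here this does not happen, because $[gh]$ is invertible in $F\mathcal F_{[\supp\Gamma]}$.
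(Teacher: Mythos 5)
Your proposal is correct and follows essentially the same route as the paper's (much terser) proof: homogeneous basis, diagonal $h_{\alpha\beta}$, Takeuchi's free Hopf algebra on a group-like coalgebra being the group algebra of a free group, and the relations~(\ref{EqDefRelIUnivComod}) collapsing to $[g][h]-[gh]$; you merely fill in the details the paper leaves implicit, including the genuinely worthwhile check that the two-sided ideal generated by $[g][h]-[gh]$ is exactly the kernel of $F\mathcal F_{[\supp\Gamma]}\twoheadrightarrow FG_\Gamma$. The only blemish is the factorization $[h]^{-1}[g]^{-1}-[gh]^{-1}=-[h]^{-1}[g]^{-1}[gh]^{-1}\bigl([g][h]-[gh]\bigr)$, which is wrong as written in a noncommutative ring (it should be $-[h]^{-1}[g]^{-1}\bigl([g][h]-[gh]\bigr)[gh]^{-1}$), but the corrected identity still exhibits membership in the two-sided ideal $I_0$, and your alternative argument via the induced group homomorphism also closes this step.
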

\begin{proof}
If $H$ is the group algebra $FG$ for some group $G$, i.e. $A$ is $G$-graded,
then we can choose $(a_\alpha)_\alpha$ to be a homogeneous basis
in $A$. In this case $h_{\alpha\beta}=0$ for $\alpha\ne \beta$
and each $h_{\alpha\alpha}$ is the group element corresponding to the homogeneous component of $a_\alpha$. The generators~(\ref{EqDefRelIUnivComod}) defining $I$ now correspond to the relations defining the universal group of the grading. From Takeuchi's construction~\cite{Takeuchi} it follows
that $L(C(\rho))$ is the group algebra of the free group generated by the support of the grading and $H^\rho=L(C(\rho))/I$ is the group algebra of 
the universal group of the grading.
\end{proof}

\begin{theorem}\label{TheoremGradingCanBeEquivalentToAGradingOnly}
If $\rho \colon A \to A \otimes H$
is an $H$-comodule structure equivalent to a group grading,
then there exists a Hopf subalgebra $H_1 \subseteq H$, isomorphic
to a group Hopf algebra, such that $\rho(A)\subseteq A \otimes H_1$.
\end{theorem}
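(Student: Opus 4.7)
The plan is to use Proposition~\ref{PropositionHcomodEquivCriterion} to transfer the grading structure into $H$ at the level of support coalgebras, and then observe that the support coalgebra of a grading is spanned by group-like elements, which forces the corresponding subcoalgebra of $H$ to sit inside a group-algebra Hopf subalgebra of $H$.

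More precisely, suppose $\rho$ is support equivalent to a grading $\Gamma\colon B = \bigoplus_{g\in G}B^{(g)}$ via an algebra isomorphism $\varphi\colon B \mathrel{\widetilde\to} A$, and write $\rho_{\Gamma}\colon B \to B\otimes FG$ for the corresponding comodule map. First I would apply Proposition~\ref{PropositionHcomodEquivCriterion} to obtain a coalgebra isomorphism $\tau\colon C(\rho_{\Gamma}) \mathrel{\widetilde\to} C(\rho)$. Second, I would compute $C(\rho_{\Gamma})$ from a homogeneous basis of $B$: it is the subspace of $FG$ spanned by $\supp\Gamma$, whose elements $g$ are all group-like, since $\Delta(g) = g\otimes g$ and $\varepsilon(g)=1$ in $FG$. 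Third, since any coalgebra homomorphism sends group-like elements to group-like elements, the set $X := \tau(\supp\Gamma)\subseteq C(\rho) \subseteq H$ consists of group-like elements of $H$ and, being the $\tau$-image of a basis, is itself a basis of $C(\rho)$.

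Fourth, I would let $G_1$ denote the subgroup of the group $G(H)$ of all group-like elements of $H$ generated by $X$, and set $H_1 := \langle G_1 \rangle_F \subseteq H$. Since group-like elements of any coalgebra are linearly independent, $H_1$ is canonically isomorphic to $FG_1$, and it is visibly closed under multiplication, comultiplication, counit and antipode (since $S(g)=g^{-1}$ on group-likes), hence a Hopf subalgebra of $H$ isomorphic to a group Hopf algebra. Finally, by the minimality property of the support coalgebra one has $\rho(A)\subseteq A\otimes C(\rho)$, and $C(\rho) = \langle X\rangle_F \subseteq FG_1 = H_1$ by construction, giving $\rho(A) \subseteq A\otimes H_1$, as required.

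The argument has essentially no hard step: the only observation that needs to be made is that ``group-likeness'' of the basis of the support coalgebra of a grading is preserved by the coalgebra isomorphism $\tau$ produced by Proposition~\ref{PropositionHcomodEquivCriterion} and hence travels into $H$. Everything else, in particular the fact that the group generated by a set of group-like elements of a Hopf algebra spans a group-algebra Hopf subalgebra, is standard.
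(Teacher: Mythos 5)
Your proof is correct, but it takes a genuinely different route from the paper. The paper deduces the statement from Theorem~\ref{TheoremUniversalHopfOfAGradingIsJustUniversalGroup}: since $\rho$ is equivalent to a grading, its universal Hopf algebra $L(C(\rho))/I$ is the group algebra of the universal group of that grading, and the canonical Hopf algebra map $L(C(\rho))/I\to H$ coming from initiality sends group-likes to group-likes, so its image $H_1$ is a group Hopf subalgebra through which $\rho$ factors. You instead bypass the universal Hopf algebra machinery entirely: you invoke only Proposition~\ref{PropositionHcomodEquivCriterion} to transport the support coalgebra of the grading (which is spanned by the group-like elements of $\supp\Gamma$) isomorphically onto $C(\rho)$, note that coalgebra maps preserve group-likeness, and then take $H_1$ to be the span of the subgroup of $G(H)$ generated by the resulting group-like basis of $C(\rho)$. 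Each step checks out: $C(\rho_\Gamma)=\langle\supp\Gamma\rangle_F$ for a homogeneous basis, group-likes of $H$ are linearly independent and form a group with $S(g)=g^{-1}$, so $\langle G_1\rangle_F\cong FG_1$ is a Hopf subalgebra, and the minimality property $\rho(A)\subseteq A\otimes C(\rho)\subseteq A\otimes H_1$ closes the argument. What your approach buys is self-containedness and economy: it needs neither Takeuchi's free Hopf algebra functor $L$ nor the construction of the Hopf ideal $I$, only the elementary support-coalgebra criterion. What the paper's approach buys is that it exhibits $H_1$ explicitly as a quotient of the group algebra of the \emph{universal} group of the grading, which ties the statement into the functorial picture developed in Section~\ref{SectionHcomodUniversalHopfAlgebra}; your $G_1$ is exactly the image of that universal group in $G(H)$, though you do not need to identify it as such.
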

\begin{proof}
In order to deduce this from Theorem~\ref{TheoremUniversalHopfOfAGradingIsJustUniversalGroup},
it is sufficient to consider
the homomorphic image $H_1$ of $L(C(\rho))/I$
and use the fact that any homomorphism of Hopf algebras maps group-like elements to group-like elements.
\end{proof}

Next we prove that taking the universal Hopf algebra of a comodule algebra yields a functor. To start with, given an algebra $A$, we define the category $\mathcal C_{A}$  as follows:
\begin{enumerate}
\item the objects are pairs $(H,\, \rho)$ where $H$ is a Hopf algebra and $\rho \colon A \to A \otimes H$ is a right $H$-comodule algebra structure on $A$;

\item the morphisms between two objects $(H_{1},\, \rho_{1})$ and $(H_{2},\, \rho_{2})$ are coalgebra homomorphisms $\tau \colon C(\rho_{1}) \to C(\rho_{2})$ such that the following diagram is commutative:
\begin{equation}\label{functor}
\xymatrix{ A  \ar[r]^(0.4){\rho_{1}} \ar[rd]_{\rho_{2}}& A \otimes C(\rho_{1}) \ar[d]^{\id_A \otimes \tau} \\
& A\otimes C(\rho_{2})
}
\end{equation}
\end{enumerate}

\begin{theorem} If $\rho_i \colon A \to A \otimes H_i$, $i=1,2$, are comodule structures 
on $A$ and $\zeta_i \colon H_i^* \to \End_F(A)$ are the corresponding $H_i^*$-actions, then in $\mathcal C_{A}$ there exists at most one morphism from $(H_1,\, \rho_1)$
to $(H_2,\, \rho_2)$. Furthermore, this morphism exists if and only if $\zeta_2(H_2^*) \subseteq \zeta_1(H_1^*)$. In particular, $\mathcal C_{A}$ is a preorder (in the sense of~\cite[Chapter I, Section 2]{MacLaneCatWork})\footnote{Or, in another terminology, a thin category.}.
\end{theorem}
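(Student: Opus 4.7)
The plan is to argue uniqueness first by reducing commutativity of \eqref{functor} to a pointwise condition on a spanning set of $C(\rho_1)$, and then to treat the two implications of the equivalence separately, re-using the technique from the proof of Proposition~\ref{PropositionHcomodEquivCriterion}.

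First I would fix a basis $(a_\alpha)_\alpha$ of $A$ and write $\rho_i(a_\alpha)=\sum_\beta a_\beta\otimes h^{(i)}_{\beta\alpha}$ for $i=1,2$, so that by construction $C(\rho_i)=\langle h^{(i)}_{\alpha\beta}\mid \alpha,\beta\rangle_F$. If $\tau\colon C(\rho_1)\to C(\rho_2)$ is a morphism in $\mathcal C_A$, the commutativity of \eqref{functor} applied to each $a_\alpha$ forces $\tau(h^{(1)}_{\beta\alpha})=h^{(2)}_{\beta\alpha}$, so $\tau$ is uniquely determined on a spanning family of $C(\rho_1)$, proving uniqueness.

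For necessity, suppose such a $\tau$ exists. Using the equivalent comodule-map formulation of~\eqref{EqDiagramCatAComod} (already recorded in the remark following the definition of $\mathcal C^{H}_A$), we get $a_{[0]}\otimes a_{[1]}=a_{(0)}\otimes \tau(a_{(1)})$ for all $a\in A$, where $\rho_1(a)=a_{(0)}\otimes a_{(1)}$ and $\rho_2(a)=a_{[0]}\otimes a_{[1]}$. Since $\zeta_i$ factors through the restriction $H_i^*\twoheadrightarrow C(\rho_i)^*$, a direct computation shows that for any $h^*\in H_2^*$ there exists $k^*\in H_1^*$ (any extension of $h^*\circ \tau\in C(\rho_1)^*$) with $\zeta_2(h^*)=\zeta_1(k^*)$, giving $\zeta_2(H_2^*)\subseteq \zeta_1(H_1^*)$.

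For sufficiency, assume $\zeta_2(H_2^*)\subseteq\zeta_1(H_1^*)$. I would define $\tau$ on generators by $\tau(h^{(1)}_{\beta\alpha}):=h^{(2)}_{\beta\alpha}$ and show it is well defined by the same dependency trick used in the proof of Proposition~\ref{PropositionHcomodEquivCriterion}: given a relation $\sum_{\alpha,\beta} t_{\beta\alpha}h^{(1)}_{\beta\alpha}=0$, introduce linear functionals $\tau_\alpha(a_\beta)=t_{\beta\alpha}$ and evaluate at $\zeta_1(\lambda)$, then translate this into a vanishing condition involving $\zeta_2(\lambda')$ for any $\lambda'\in H_2^*$ via the inclusion $\zeta_2(H_2^*)\subseteq\zeta_1(H_1^*)$; this yields $\sum_{\alpha,\beta} t_{\beta\alpha}h^{(2)}_{\beta\alpha}=0$ and so $\tau$ is a well-defined linear map. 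The coalgebra identities~\eqref{EqDeltahAHcomod} shared by both families $(h^{(i)}_{\alpha\beta})$ make $\tau$ a coalgebra homomorphism, and the commutativity of~\eqref{functor} is immediate on basis elements.

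Finally, having shown that between any two objects there is at most one arrow, the category $\mathcal C_A$ is thin, hence a preorder in the sense of~\cite[Chapter I, Section 2]{MacLaneCatWork}. The only real technical point is the well-definedness of $\tau$ in the sufficiency part, but that is handled by the duality argument copied verbatim from Proposition~\ref{PropositionHcomodEquivCriterion}.
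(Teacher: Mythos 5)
Your proposal is correct and follows essentially the same route as the paper's proof: uniqueness via determination of $\tau$ on the spanning family $h_{\beta\alpha}$ of $C(\rho_1)$, sufficiency via the linear-dependency argument recycled from Proposition~\ref{PropositionHcomodEquivCriterion} (which indeed only needs the one-sided inclusion $\zeta_2(H_2^*)\subseteq\zeta_1(H_1^*)$), and necessity by pulling back functionals on $C(\rho_2)$ along $\tau$ to realize every $\zeta_2(h^*)$ as some $\zeta_1(k^*)$. No gaps.
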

\begin{proof}
As before, choose a basis $(a_\alpha)_\alpha$ in $A$ and let $\rho_1(a_\alpha)=\sum_{\beta} a_\beta \otimes h_{\beta\alpha}$ where $h_{\beta\alpha} \in H_1$ 
and $\rho_2(a_\alpha)=\sum_{\beta} a_\beta \otimes h'_{\beta\alpha}$
where $h'_{\beta\alpha} \in H_2$.

The proof of Proposition~\ref{PropositionHcomodEquivCriterion}
implies that if $\zeta_2(H_2^*) \subseteq \zeta_1(H_1^*)$, then if among $h_{\beta\alpha}$ there exist some
linear dependence, the same linear dependence holds among $h'_{\beta\alpha}$.
Therefore there exists a linear map $\tau \colon C(\rho_1) \to C(\rho_2)$
such that $\tau(h_{\beta\alpha})=h'_{\beta\alpha}$, making~\eqref{functor} commutative.
By~\eqref{EqDeltahAHcomod} $\tau$ is a coalgebra homomorphism.

Conversely, suppose~\eqref{functor} is commutative for some coalgebra homomorphism $\tau \colon C(\rho_1) \to C(\rho_2)$. Then $\tau(h_{\beta\alpha})=h'_{\beta\alpha}$ for all $\alpha$ and $\beta$.
Since $C(\rho_1)$ is the $F$-linear span of all $h_{\beta\alpha}$, such $\tau$ is unique
and there exists at most one morphism from $(H_1,\, \rho_1)$
to $(H_2,\, \rho_2)$, which is always surjective.
For every $f \in \zeta_2(H_2^*)$
there exists $\alpha \in C(\rho_2)^*$ such that
 we have $fa=(\id_A \otimes \alpha)\rho_2(a)$
for all $a\in A$.
The commutativity of~\eqref{functor} implies 
$fa=(\id_A \otimes \tau^*(\alpha))\rho_1(a)=\tau^*(\alpha)a$
where $\tau^*(\alpha) \in C(\rho_1)^*$.
In other words, $f \in \zeta_1(H_1^*)$
and $\zeta_2(H_2^*) \subseteq \zeta_1(H_1^*)$.
\end{proof}

If $\zeta_2(H_2^*) \subseteq \zeta_1(H_1^*)$, we say that $\rho_1$ is \textit{finer} than $\rho_2$
and $\rho_2$ is \textit{coarser} than $\rho_1$.
Note that Theorem~\ref{TheoremGradFinerCoarserCriterion} implies that this definition
agrees with the one for gradings. Again, $\id_A$ is an equivalence of $\rho_1$ and $\rho_2$
if and only if $\rho_1$ is both finer and coarser than $\rho_2$.
Furthermore, the proof of Theorem~\ref{TheoremUniversalHcomodExistence}
implies that the universal Hopf algebra of a comodule structure $\rho$ is universal not only among the structures equivalent to $\rho$, but also among all the structures that are coarser than $\rho$.

We claim that any morphism $\tau \colon (H_{1},\, \rho_{1}) \to (H_{2},\, \rho_{2})$ in $\mathcal C_{A}$ induces a Hopf algebra homomorphism between the corresponding universal Hopf algebras $L(C(\rho_{1}))/I_{1}$ and $L(C(\rho_{2}))/I_{2}$, respectively.

 To this end, consider $(a_{\alpha})$ to be a basis of $A$ over $F$ and let $C(\rho_{1})$, $C(\rho_{2})$ be the $F$-spans of all $h_{\alpha \beta}$, respectively $t_{\alpha \beta}$, where $\rho_{1}(a_{\alpha}) = \sum_{\beta} a_{\beta} \otimes h_{\beta \alpha}$ and $\rho_{2}(a_{\alpha}) = \sum_{\beta} a_{\beta} \otimes t_{\beta \alpha}$. Then, the coalgebra homomorphism $\tau \colon C(\rho_{1}) \to C(\rho_{2})$ determines a unique Hopf algebra homomorphism $\varphi \colon L(C(\rho_{1})) \to L(C(\rho_{2}))$ such that the following diagram is commutative:
\begin{equation*}\xymatrix{ C(\rho_{1})  \ar[r]^(0.45){\eta_{C(\rho_{1})}} \ar[d]_{\tau}& L(C(\rho_{1})) \ar[d]^{\varphi} \\
C(\rho_{2}) \ar[r]_(0.45){\eta_{C(\rho_{2})}} & L(C(\rho_{2}))
} \qquad {\rm
i.e.} \,\, \varphi \eta_{C(\rho_{1})} = \eta_{C(\rho_{2})} \tau.
\end{equation*}
Let $\pi_{1} \colon L(C(\rho_{1})) \to L(C(\rho_{1}))/I_{1}$ and $\pi_{2} \colon L(C(\rho_{2})) \to L(C(\rho_{2}))/I_{2}$ be the canonical projections. Now notice that by the commutativity of ~(\ref{functor}), we obtain $\tau(h_{\beta \alpha}) = t_{\beta \alpha}$ for all $\alpha$, $\beta$ which together with the commutativity of the above diagram implies by a straightforward computation that all generators ~(\ref{EqDefRelIUnivComod}) of $I_{1}$ belong to the kernel of $\pi_{2}  \varphi$. Therefore, there exists a unique Hopf algebra homomorphism $\overline{\tau} \colon L(C(\rho_{1}))/I_{1} \to L(C(\rho_{2}))/I_{2}$ such that the following diagram is commutative:
\begin{equation}\label{hopfmap}\xymatrix{ L(C(\rho_{1}))  \ar[d]_(0.45){\pi_{1}} \ar[r]^(0.45){\pi_{2}  \varphi} & L(C(\rho_{2}))/I_{2} \\
L(C(\rho_{1}))/I_{1} \ar[ur]_(0.45){\overline{\tau}}  & {} 
}  \qquad {\rm
i.e.} \,\, \overline{\tau}  \pi_{1} = \pi_{2}  \varphi.
\end{equation}

We have in fact defined a functor from $\mathcal C_{A}$ to the category of Hopf algebras:

\begin{theorem}\label{functorial1}
There exists a functor $F \colon \mathcal C_{A} \to \mathbf{Hopf}_F$ given as follows:
$$F(H, \rho) = L(C(\rho))/I\,\, {\rm and}\,\, F(\tau) = \overline{\tau}.$$
\end{theorem}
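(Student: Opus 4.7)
The plan is to verify the two functoriality axioms, namely preservation of identities and of composition, since the assignment of $F$ on objects and morphisms has essentially been carried out in the preceding paragraphs. The crucial point to exploit throughout is that $\overline{\tau}$ is the \emph{unique} Hopf algebra map making diagram~(\ref{hopfmap}) commute, this uniqueness being guaranteed by the surjectivity of the canonical projection $\pi_{1} \colon L(C(\rho_{1})) \to L(C(\rho_{1}))/I_{1}$.

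First I would check that $F$ preserves identities. If $(H,\rho)$ is an object of $\mathcal C_{A}$ and $\tau = \id_{C(\rho)}$, then by the universal property of $L$ the induced Hopf algebra map is $\varphi = \id_{L(C(\rho))}$. Consequently $\id_{L(C(\rho))/I}\, \pi = \pi = \pi \, \varphi$, so by uniqueness in~(\ref{hopfmap}) we obtain $\overline{\id} = \id_{L(C(\rho))/I}$, i.e.\ $F(\id_{(H,\rho)}) = \id_{F(H,\rho)}$.

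Next I would verify preservation of composition. Given composable morphisms $\tau_{1} \colon (H_{1},\rho_{1}) \to (H_{2},\rho_{2})$ and $\tau_{2} \colon (H_{2},\rho_{2}) \to (H_{3},\rho_{3})$ with associated Hopf algebra maps $\varphi_{i} \colon L(C(\rho_{i})) \to L(C(\rho_{i+1}))$, the key observation is that
$$\varphi_{2}\, \varphi_{1}\, \eta_{C(\rho_{1})} \;=\; \varphi_{2}\, \eta_{C(\rho_{2})}\, \tau_{1} \;=\; \eta_{C(\rho_{3})}\, \tau_{2} \tau_{1},$$
so by the universal property of $L$ the composite $\varphi_{2} \varphi_{1}$ coincides with the Hopf algebra map $\varphi_{\tau_{2} \tau_{1}}$ induced by $\tau_{2} \tau_{1}$. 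A short diagram chase using~(\ref{hopfmap}) for each $\tau_{i}$ then gives
$$\overline{\tau_{2}}\, \overline{\tau_{1}}\, \pi_{1} \;=\; \overline{\tau_{2}}\, \pi_{2}\, \varphi_{1} \;=\; \pi_{3}\, \varphi_{2}\, \varphi_{1} \;=\; \pi_{3}\, \varphi_{\tau_{2} \tau_{1}} \;=\; \overline{\tau_{2} \tau_{1}}\, \pi_{1},$$
and surjectivity of $\pi_{1}$ yields $\overline{\tau_{2} \tau_{1}} = \overline{\tau_{2}}\, \overline{\tau_{1}}$, as desired.

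The main obstacle is less a genuine difficulty than a bookkeeping point: one must also confirm that the composite $\tau_{2} \tau_{1}$, a priori a coalgebra map $C(\rho_{1}) \to C(\rho_{3})$, is really a morphism of $\mathcal C_{A}$, i.e.\ makes the analogue of diagram~(\ref{functor}) commute. This follows immediately by vertically pasting the two squares~(\ref{functor}) associated with $\tau_{1}$ and $\tau_{2}$. Once this is noted, the argument reduces entirely to an iterated use of uniqueness coming from the two universal properties at play, the adjunction unit $\eta$ and the quotient projections $\pi_{i}$, with no computational input required.
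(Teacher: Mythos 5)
Your proposal is correct and follows essentially the same route as the paper: identify $\varphi_{2}\varphi_{1}$ with the map induced by $\tau_{2}\tau_{1}$ via the universal property of $L$, then chase diagram~(\ref{hopfmap}) and cancel the surjection $\pi_{1}$ (the paper phrases this cancellation as uniqueness of the map satisfying its condition~(\ref{FC6}), which is the same thing). Your extra remarks on identity preservation and on $\tau_{2}\tau_{1}$ being a morphism of $\mathcal C_{A}$ are correct bookkeeping that the paper leaves implicit.
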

\begin{proof}
We only need to show that $F$ respects composition of morphisms. Indeed, consider $\tau_{1} \colon (H,\, \rho) \to (H_{1},\, \rho_{1})$ and $\tau_{2} \colon (H_{1},\, \rho_{1}) \to (H_{2},\, \rho_{2})$ two morphisms in $\mathcal C_{A}$. We obtain two unique Hopf algebra homomorphisms $\varphi_{1} \colon L(C(\rho)) \to L(C(\rho_{1}))$ and $\varphi_{2} \colon L(C(\rho_{1})) \to L(C(\rho_{2}))$ such that
\begin{equation}\label{FC1}
\varphi_{1} \eta_{C(\rho)} = \eta_{C(\rho_{1})}  \tau_{1}
\end{equation}
\begin{equation}\label{FC2}
\varphi_{2} \eta_{C(\rho_{1})} = \eta_{C(\rho_{2})}  \tau_{2}
\end{equation}
Furthermore, we have unique Hopf algebra homomorphisms $\overline{\tau_{1}} \colon L(C(\rho))/I \to L(C(\rho_{1}))/I_{1}$ and $\overline{\tau_{2}} \colon L(C(\rho_{1}))/I_{1} \to L(C(\rho_{2}))/I_{2}$ such that
\begin{equation}\label{FC3}
\overline{\tau_{1}} \pi = \pi_{1}  \varphi_{1}
\end{equation}
\begin{equation}\label{FC4}
\overline{\tau_{2}} \pi_{1} = \pi_{2}  \varphi_{2}
\end{equation}  
Similarly, there exist two unique Hopf algebra homomorphisms $\psi \colon L(C(\rho)) \to L(C(\rho_{2}))$ and $\overline{\tau_{2} \tau_{1}} \colon L(C(\rho))/I \to L(C(\rho_{2}))/I_{2}$ such that
\begin{equation}\label{FC5}
\psi \,\eta_{C(\rho)} = \eta_{C(\rho_{2})} \tau_{2} \tau_{1}
\end{equation}
\begin{equation}\label{FC6}
\overline{\tau_{2} \tau_{1}} \pi  = \pi_{2}  \psi
\end{equation}  
The proof will be finished once we show that $\overline{\tau_{2} \tau_{1}} = \overline{\tau_{2}}\,  \overline{\tau_{1}}$. First we prove that $\psi =  \varphi_{2}  \varphi_{1}$. 
Indeed, using~(\ref{FC1}) and~(\ref{FC2}) we obtain
\begin{equation*}
\varphi_{2} \, \varphi_{1} \eta_{C(\rho)}= \varphi_{2} \eta_{C(\rho_{1})} \tau_{1} =  \eta_{C(\rho_{2})} \tau_{2} \tau_{1}  
\end{equation*}
As $\psi$ is the unique Hopf algebra homomorphism for which~(\ref{FC5}) holds we obtain $\psi = \varphi_{2}  \varphi_{1}$. Furthermore, using~(\ref{FC4}) and~(\ref{FC3}) yields
\begin{equation*}
\pi_{2} \varphi_{2}  \varphi_{1} = \overline{\tau_{2}} \pi_{1} \varphi_{1} = \overline{\tau_{2}}\,  \overline{\tau_{1}} \pi 
\end{equation*}
Finally, the uniqueness of the Hopf algebra homomorphism for which~(\ref{FC6}) holds implies $\overline{\tau_{2} \tau_{1}} = \overline{\tau_{2}}\, \overline{\tau_{1}}$, as desired.   
\end{proof}

\begin{remark}\label{RemarkManin}
At this point we should recall that there is another construction in the literature related to the universal Hopf algebra of a comodule algebra, namely Manin's universal coacting Hopf algebra (see for instance \cite[Proposition 1.3.8, Remark 2.6.4]{Pareigis}). However, the latter construction is different from the one introduced in the present paper as, roughly speaking, it involves all possible coactions on a certain algebra not only those equivalent to a given one. More precisely, if $A$ is a given algebra, it can be easily seen that the universal coacting Hopf algebra of $A$ is precisely the initial object in the category whose objects are all comodule algebra structures on $A$ and the morphisms between two such objects $(H_{1},\,\rho_{1})$ and $(H_{2},\,\rho_{2})$ are Hopf algebra homomorphisms $f: H_{1} \to H_{2}$ such that the following diagram commutes:
\begin{equation*}
\xymatrix{ A  \ar[r]^(0.4){\rho_{1}} \ar[rd]_{\rho_{2}}& A \otimes H_{1} \ar[d]^{\id_A \otimes f} \\
& A\otimes H_{2}}
\end{equation*}
In other words, our construction can be considered as a refinement of Manin's as we get more information
on each class of equivalent coactions. Furthermore, note that Manin's universal coacting Hopf algebra was shown to exist only for finite dimensional algebras while our construction can be performed for any arbitrary algebra. 
\end{remark}

\subsection{$H$-comodule Hopf--Galois extensions}
\label{SectionHcomodHopfGalois}

Let $H$ be a Hopf algebra, let $A$ be a nonzero unital $H$-comodule algebra and let $\rho \colon A \to A\otimes H$ be its comodule map. 
Denote by $A^{\text{co}H}$ the subalgebra of coinvariants, i.e. 
$A^{\text{co}H} := \lbrace a\in A \mid \rho(a)=a\otimes 1_H \rbrace$.
Recall that $A$ is called a \textit{Hopf--Galois extension} of $A^{\text{co}H}$
if the linear map $\mathsf{can} \colon A \mathbin{\otimes_{A^{\text{co}H}}} A
\to A \mathbin{\otimes} H$ defined below is bijective: $$\mathsf{can}(a\otimes b) := 
ab_{(0)}\otimes b_{(1)}.$$
Our next result computes the universal Hopf algebra of a Hopf-Galois extension. 

\begin{theorem}\label{TheoremHComodHopfGaloisUnivHopfAlg}
Let $A/A^{\text{co}H}$ be a Hopf--Galois extension. Then $(H,\rho)$ is the universal Hopf algebra of $\rho$.
\end{theorem}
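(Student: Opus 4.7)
My plan for this theorem is to recognize that there are two natural stages: identifying the support coalgebra of $\rho$, and then verifying the universal property of the initial object of $\mathcal{C}_A^H$.

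First I would show that $C(\rho) = H$. Since $\mathsf{can}(a \otimes b) = a b_{(0)} \otimes b_{(1)}$ and $b_{(1)} \in C(\rho)$ for every $b \in A$, the image of $\mathsf{can}$ is contained in $A \otimes C(\rho)$. Surjectivity of $\mathsf{can}$ onto $A \otimes H$ therefore forces $C(\rho) = H$. This step uses only the surjective half of the Hopf--Galois condition.

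Next, given any object $(H_1, \rho_1) \in \mathcal{C}_A^H$, I would invoke Proposition~\ref{PropositionHcomodEquivCriterion} to obtain a coalgebra isomorphism $\tau_0 \colon C(\rho) \to C(\rho_1)$ with $(\id_A \otimes \tau_0)\rho = \rho_1$, and compose with the inclusion $C(\rho_1) \hookrightarrow H_1$ to produce a coalgebra map $\tau \colon H \to H_1$ satisfying $(\id_A \otimes \tau)\rho = \rho_1$. Unitality $\tau(1_H) = 1_{H_1}$ follows from $\rho(1_A) = 1_A \otimes 1_H$, $\rho_1(1_A) = 1_A \otimes 1_{H_1}$ (Theorem~\ref{TheoremHopfUnivComodUnitality}), and the injectivity of $\tau$ on $H = C(\rho)$.

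The crux is showing that $\tau$ is multiplicative. Since both $\rho$ and $\rho_1 = (\id_A \otimes \tau)\rho$ are algebra maps, comparing $\rho_1(ab) = \rho_1(a)\rho_1(b)$ yields
\begin{equation*}
a_{(0)} b_{(0)} \otimes \bigl( \tau(a_{(1)} b_{(1)}) - \tau(a_{(1)})\tau(b_{(1)}) \bigr) = 0 \quad \text{in } A \otimes H_1, \text{ for all } a, b \in A.
\end{equation*}
To promote this ``entangled'' identity to $\tau(hk) = \tau(h)\tau(k)$ for arbitrary $h, k \in H$, I would use the full bijectivity of $\mathsf{can}$: first write $1_A \otimes h = \sum_i x_i y_{i,(0)} \otimes y_{i,(1)}$ and $1_A \otimes k = \sum_j u_j v_{j,(0)} \otimes v_{j,(1)}$ via $\mathsf{can}^{-1}$, then expand $1_A \otimes hk = (1_A \otimes h)(1_A \otimes k)$ in $A \otimes H$, and apply $\id_A \otimes \tau$. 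Using the displayed relation with $a = y_i$, $b = v_j$ (together with $A$-linearity on the left) shows that the two resulting expressions for $1_A \otimes \tau(hk)$ and $1_A \otimes \tau(h)\tau(k)$ coincide, hence $\tau(hk) = \tau(h)\tau(k)$. Since $\tau$ is then a unital bialgebra map between Hopf algebras, it automatically preserves antipodes.

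Uniqueness is immediate: any Hopf algebra map $\tau' \colon H \to H_1$ giving a morphism in $\mathcal{C}_A^H$ must satisfy $(\id_A \otimes \tau')\rho = \rho_1$, hence agree with $\tau_0$ on $C(\rho) = H$, and therefore $\tau' = \tau$. The main obstacle in this plan is the multiplicativity argument: eliminating the $a_{(0)} b_{(0)}$ factor requires genuinely exploiting the invertibility of $\mathsf{can}$, and doing so cleanly amounts to a careful bookkeeping exercise with iterated canonical maps.
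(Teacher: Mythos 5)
Your overall strategy coincides with the paper's: surjectivity of $\mathsf{can}$ forces $C(\rho)=H$, Proposition~\ref{PropositionHcomodEquivCriterion} then produces the unique coalgebra map $\tau\colon H\to H_1$ with $(\id_A\otimes\tau)\rho=\rho_1$, unitality and uniqueness are handled exactly as you describe, and the whole weight of the proof falls on multiplicativity of $\tau$. (One small remark: only surjectivity of $\mathsf{can}$ is ever used, not bijectivity; the paper notes this explicitly.)

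The multiplicativity step as you describe it, however, has a gap. Writing $1_A\otimes h=\sum_i x_iy_{i(0)}\otimes y_{i(1)}$ and $1_A\otimes k=\sum_j u_jv_{j(0)}\otimes v_{j(1)}$ and expanding, you get
$$1_A\otimes hk=\sum_{i,j}x_iy_{i(0)}\,u_jv_{j(0)}\otimes y_{i(1)}v_{j(1)},$$
so after applying $\id_A\otimes\tau$ you must kill $\sum_{i,j}x_iy_{i(0)}u_jv_{j(0)}\otimes\bigl(\tau(y_{i(1)}v_{j(1)})-\tau(y_{i(1)})\tau(v_{j(1)})\bigr)$. The relation coming from $\rho_1(ab)=\rho_1(a)\rho_1(b)$ with $a=y_i$, $b=v_j$, even after left multiplication by $x_i$, only yields the vanishing of $\sum_i x_iy_{i(0)}v_{j(0)}\otimes(\cdots)$; the factor $u_j$ sits \emph{between} $y_{i(0)}$ and $v_{j(0)}$, and inserting it there is not an operation you can apply to both sides of an identity in $A\otimes H_1$ (it is not left multiplication on the first leg, and it does not factor through the product $y_{i(0)}v_{j(0)}$). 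This is exactly why the paper proceeds in two stages: it first upgrades the ``entangled'' identity to the decoupled one
$$a\,b_{(0)}\otimes\tau(hb_{(1)})=a\,b_{(0)}\otimes\tau(h)\tau(b_{(1)})\qquad(a,b\in A,\ h\in H),$$
using one application of surjectivity of $\mathsf{can}$ to $a\otimes h_\beta$ together with the fact that $\sum_i a_ib_{i(0)}\otimes b_{i(1)}=a\otimes h_\beta$ holds as an identity in $A\otimes H$; only then does a second application of surjectivity, to $1_A\otimes h_\gamma$, remove the remaining $b_{(0)}$ and give $\tau(hh_\gamma)=\tau(h)\tau(h_\gamma)$. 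Your plan names the right ingredients, but the single application of the entangled relation with $a=y_i$, $b=v_j$ that you propose would not close the argument; the intermediate decoupled identity is the missing step.
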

\begin{proof}
Note that the surjectivity of $\mathsf{can}$ implies $C(\rho)=H$. Hence
for every Hopf algebra $H_1$ and every $H_1$-comodule structure $\rho_1$ on $A$
equivalent to $\rho$, there exists a unique coalgebra homomorphism $\tau \colon H \to H_1$
such that
\begin{equation*}\xymatrix{ A  \ar[r]^(0.4){\rho} \ar[rd]_{\rho_1}& A \otimes H \ar[d]^{\id_A \otimes \tau} \\
& A\otimes H_1}
\end{equation*}

The only thing left to prove now is that $\tau$ is a Hopf algebra homomorphism.

First, since by Theorem~\ref{TheoremHopfUnivComodUnitality} unital comodule structures
can be equivalent only to unital comodule structures, $\rho_1(1_A)=1_A \otimes 1_{H_1}$.
At the same time we have $\rho_1(1_A)=(\id_A \otimes \tau)\rho(1_A)=1_A \otimes \tau(1_H)$.
Hence $\tau(1_H)=1_{H_1}$.

Second, for every $a,b\in A$ we have \begin{equation}\label{EqHopfGaloisPart0}a_{(0)}b_{(0)}\otimes \tau(a_{(1)}b_{(1)}) = \rho_1(ab)=
\rho_1(a)\rho_1(b)=a_{(0)}b_{(0)}\otimes \tau(a_{(1)})\tau(b_{(1)}).\end{equation}
We claim that
\begin{equation}\label{EqHopfGaloisPart1} a b_{(0)}\otimes \tau(h b_{(1)}) = a b_{(0)}\otimes \tau(h) \tau(b_{(1)})
\text{ for }a, b\in A,\ h\in H.\end{equation}
Choose a basis $(h_\alpha)_\alpha$ in $H$ and fix some basis element $h_\beta$.
The surjectivity of $\mathsf{can}$ implies that there exist
$a_i, b_i \in A$ such that $a \otimes h_\beta = \sum_i a_i b_{i(0)} \otimes b_{i(1)}$.
Note that
$\rho(b_i)= \sum_\alpha b_{i\alpha} \otimes h_\alpha$ for some $b_{i\alpha} \in A$
where for each $i$ only a finite number of $b_{i\alpha}$ is non-zero.
Hence $a \otimes h_\beta = \sum_{i,\alpha} a_i b_{i\alpha} \otimes h_\alpha$
and \begin{equation}\label{EqHopfGaloisSokr}\sum_i a_i b_{i\alpha} = \left\lbrace\begin{array}{rrr} a & \text{ if } &  \alpha = \beta, \\ 
0 & \text{ if } &  \alpha \ne \beta.  \end{array}\right.\end{equation}
Thus
\begin{equation*}\begin{split}  a b_{(0)}\otimes \tau(h_\beta) \tau(b_{(1)})
= \sum_{i,\alpha} a_i b_{i\alpha} b_{(0)} \otimes \tau(h_\alpha) \tau(b_{(1)})
 \\ \stackrel{(\ref{EqHopfGaloisPart0})}{=}
\sum_{i,\alpha} a_i b_{i\alpha} b_{(0)} \otimes \tau(h_\alpha b_{(1)})
\stackrel{(\ref{EqHopfGaloisSokr})}{=} 
a b_{(0)} \otimes \tau(h_\beta b_{(1)}).
\end{split}\end{equation*}
In other words, we have proved~(\ref{EqHopfGaloisPart1}) in the case $h=h_\beta$.
Since $\beta$ was an arbitrary index and both sides of~(\ref{EqHopfGaloisPart1})
are linear in $h$, (\ref{EqHopfGaloisPart1}) is proved for arbitrary $h$.

Fix now arbitrary $h\in H$ and some basis element $h_\gamma$. Again the surjectivity of $\mathsf{can}$ implies that there exists
$c_i, d_i \in A$ such that $1_A \otimes h_\gamma = \sum_i c_i d_{i(0)} \otimes d_{i(1)}$.
We can rewrite $\rho(d_i)=\sum_{\alpha} d_{i\alpha} \otimes h_\alpha$
for some $d_{i\alpha} \in A$. Then $1_A \otimes h_\gamma = \sum_{i,\alpha} c_i d_{i\alpha} \otimes h_\alpha$
and \begin{equation}\label{EqHopfGaloisSokr2}\sum_i c_i d_{i\alpha} = \left\lbrace\begin{array}{rrr} 1_A & \text{ if } &  \alpha = \gamma, \\ 
0 & \text{ if } &  \alpha \ne \gamma.  \end{array}\right.\end{equation}
Then \begin{equation*}\begin{split} 1_A \otimes \tau(h)\tau(h_\gamma)= \sum_{i,\alpha} c_i d_{i\alpha} \otimes \tau(h)\tau(h_\alpha)
\stackrel{(\ref{EqHopfGaloisPart1})}{=} \sum_{i,\alpha} c_i d_{i\alpha} \otimes \tau(h h_\alpha)
\stackrel{(\ref{EqHopfGaloisSokr2})}{=} 1_A \otimes \tau(h h_\gamma).
\end{split}\end{equation*}
Hence $\tau(h h_\gamma)=\tau(h)\tau(h_\gamma)$. Since $\gamma$ was an arbitrary index, $\tau$ is a bialgebra homomorphism and therefore a Hopf algebra homomorphism. Thus $(H,\rho)$ is the universal Hopf algebra of~$\rho$.
\end{proof}

\begin{remark}
In the proof of Theorem~\ref{TheoremHComodHopfGaloisUnivHopfAlg} we have used only the surjectivity of $\mathsf{can}$.
\end{remark}

If we consider the standard $G$-grading on the group algebra $FG$ of a group $G$,
then the universal group of this grading is isomorphic to $G$. In the case of a comodule structure
we can obtain a similar result:

\begin{corollary}\label{CorollaryUniversalHopfOfCoactionOnItselfIsItself}
Let $H$ be a Hopf algebra. Then the universal Hopf algebra
of the $H$-comodule algebra structure on $H$ defined by the comultiplication $\Delta \colon H \to H \otimes 
H$ is again  $(H, \Delta)$.
\end{corollary}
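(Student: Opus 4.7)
The plan is to derive this as an immediate consequence of Theorem~\ref{TheoremHComodHopfGaloisUnivHopfAlg} by showing that $H$, equipped with the right coaction $\Delta \colon H \to H \otimes H$, is a Hopf--Galois extension of its coinvariants. The only non-trivial input needed (by the remark following that theorem) is the surjectivity of the canonical map, so this reduces the corollary to a single formula involving the antipode.

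First I would identify the coinvariants. An element $a \in H$ satisfies $\Delta(a) = a \otimes 1_H$ if and only if, applying $\varepsilon \otimes \id_H$ and using the counit axiom, $a = \varepsilon(a) 1_H$. Hence $H^{\mathrm{co} H} = F \cdot 1_H$, and in particular $H \otimes_{H^{\mathrm{co} H}} H = H \otimes H$.

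Next I would check that the canonical map
\[
\mathsf{can} \colon H \otimes H \to H \otimes H, \qquad \mathsf{can}(a \otimes b) = a b_{(1)} \otimes b_{(2)},
\]
is bijective (in fact only surjectivity is needed). The candidate inverse is $x \otimes y \mapsto x S(y_{(1)}) \otimes y_{(2)}$. Checking surjectivity on a pure tensor $x \otimes y$:
\[
\mathsf{can}\bigl(x S(y_{(1)}) \otimes y_{(2)}\bigr) = x S(y_{(1)}) y_{(2)} \otimes y_{(3)} = x \varepsilon(y_{(1)}) \otimes y_{(2)} = x \otimes y,
\]
where we used coassociativity and the antipode axiom. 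Thus $\mathsf{can}$ is surjective (and a symmetric computation shows that the two composites are identities, so $H/F$ is indeed Hopf--Galois).

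With the Hopf--Galois property established, Theorem~\ref{TheoremHComodHopfGaloisUnivHopfAlg} (or the stronger version stated in the remark after its proof) applies directly and identifies $(H, \Delta)$ as the universal Hopf algebra of the comodule structure $\Delta$ on $H$. There is no substantial obstacle here beyond recalling the standard antipode formula for $\mathsf{can}^{-1}$; the whole argument is a one-line verification once the right theorem is invoked.
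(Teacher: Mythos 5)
Your proof is correct and follows essentially the same route as the paper: the paper also identifies $H^{\mathrm{co}H}\cong F$ and invokes Theorem~\ref{TheoremHComodHopfGaloisUnivHopfAlg}, the only difference being that it cites a textbook example for the Hopf--Galois property of $H/F$ where you verify it directly via the standard inverse $x\otimes y\mapsto xS(y_{(1)})\otimes y_{(2)}$. Your computations (the identification of the coinvariants and the check that $\mathsf{can}$ is surjective, which is all the theorem's proof actually uses) are accurate.
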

\begin{proof}
We have $H^{\text{co}H}  \cong F$ and $H/F$ is a Hopf--Galois extension by \cite[Examples 6.4.8, 1)]{Danara}.
The desired conclusion now follows from Theorem~\ref{TheoremHComodHopfGaloisUnivHopfAlg}.
\end{proof}

\begin{example} Let $H$ be a Hopf algebra and let $A$ be a unital $H$-module algebra. We denote by $A \# H$ the corresponding \textit{smash product}, i.e. $A \# H = A \otimes H$ as a vector space with multiplication given as follows:
$$
(a \# h)(b \# g) = a(h_{(1)} b) \# h_{(2)} g
$$
where we denote the element $a \otimes h \in A \otimes H$ by $a \# h$. Then, we have an $H$-comodule algebra structure on $A \# H$ given by:
$$
\rho\colon A \# H \to (A \# H) \otimes H,\,\,\, \rho(a \# h) = a \# h_{(1)} \otimes h_{(2)}
$$
Then $(A \# H)^{\mathrm{co}H} \cong A$ and $A \# H/A$ is a Hopf--Galois extension by \cite[Examples 6.4.8, 2)]{Danara}.
Now Theorem~\ref{TheoremHComodHopfGaloisUnivHopfAlg} implies that the universal Hopf algebra of $\rho$ is $(H,\, \id_{A} \otimes \Delta)$. 
\end{example}

\section{Module structures on algebras}

\subsection{Support equivalence of module structures on algebras}
\label{SectionHmodule}

Analogously, we can introduce the notion of equivalence of module structures on algebras.

\begin{definition}
Let $A_i$ be $H_i$-module algebras for Hopf algebras $H_i$, $i=1,2$. We say that an isomorphism
$\varphi \colon A_1 \mathrel{\widetilde\to} A_2$ of algebras is a \textit{support equivalence
of module algebra structures} on $A_1$ and $A_2$ if 
\begin{equation}\label{EqImagesOfHiCoincide}
\tilde\varphi\Bigl(\zeta_1\bigl(H_1 \bigr)\Bigr)=\zeta_2\bigl(H_2\bigr)
\end{equation}
where $\zeta_i$ is the module algebra structure on $A_{i}$ and the isomorphism $\tilde\varphi \colon \End_{F}(A_1) \mathrel{\widetilde\to} \End_{F}(A_2)$ is defined by the conjugation by $\varphi$.
In this case we call module algebra structures on $A_1$ and $A_2$
\textit{support equivalent via the isomorphism $\varphi$} and, as in the comodule algebra case, we will say  just \textit{equivalent} for short.\end{definition}

It is easy to see that each equivalence of module algebra structures maps $H_1$-submodules to $H_2$-submodules.
In Lemma~\ref{LemmaHEquivCodimTheSame} below we show that one can identify the corresponding relatively free $H_1$- and $H_2$-module algebras and codimensions of polynomial $H$-identities for equivalent algebras coincide.

As in the previous sections, we can restrict our consideration to the case when $A_1=A_2$
and $\varphi$ is an identity map. 

Let $A$ be a left $H$-module algebra and $\zeta \colon H \to  \End_{F}(A)$ the corresponding module map.
Denote by $\tilde x$ the image of $x\in H$ in $H/\ker \zeta$.

 Then the map $\hat \zeta \colon H/{\rm ker}\, \zeta \to \End_{F}(A)$ 
where 
$$
\hat \zeta(\tilde x) := \zeta(x)
$$  
for all $x\in H$,
defines on $A$ a structure of an $H/\ker \zeta$-module. Notice that $\hat \zeta$ is obviously injective and, moreover, we have $\zeta(H) = \hat \zeta(H/{\rm ker}\, \zeta)$.
Proposition~\ref{PropositionHmodEquivCriterion} is an analog of Proposition~\ref{PropositionHcomodEquivCriterion} for module algebras.

\begin{proposition}\label{PropositionHmodEquivCriterion}
Let $A_i$ be $H_i$-module algebras for Hopf algebras $H_i$, $i=1,2$. Then an isomorphism
$\varphi \colon A_1 \mathrel{\widetilde\to} A_2$ of algebras is an equivalence of module algebra structures $\zeta_i \colon H_i \to \End_{F}(A_i)$, $i=1,2$, if and only if there exists an isomorphism $\lambda: H_{1}/{\rm ker}\, \zeta_{1} \mathrel{\widetilde\to} H_{2}/{\rm ker}\, \zeta_{2}$ of algebras such that the following diagram is commutative:
\begin{equation}
\xymatrix{ H_{1}/{\rm ker}\, \zeta_{1} \ar[d]_{\lambda} \ar[r]^-{\hat \zeta_{1}} & \End_{F}(A_1) \ar@<-1ex>[d]^{\tilde\varphi} \\
H_{2}/{\rm ker}\, \zeta_{2} \ar[r]_-{\hat \zeta_{2}} & \End_{F}(A_2)}
\end{equation}
\end{proposition}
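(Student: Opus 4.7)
The plan is to prove both directions by a direct diagram chase, using the injectivity of the induced maps $\hat\zeta_i \colon H_i/\ker\zeta_i \hookrightarrow \End_F(A_i)$ on quotients together with the fact that the conjugation $\tilde\varphi$ is an algebra isomorphism $\End_F(A_1) \mathrel{\widetilde\to} \End_F(A_2)$.

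For the forward direction, I would assume the support equivalence condition $\tilde\varphi\bigl(\zeta_1(H_1)\bigr)=\zeta_2(H_2)$ and construct $\lambda$ as the composition
\[
\lambda := \hat\zeta_2^{-1}\circ \tilde\varphi \circ \hat\zeta_1 \colon H_1/\ker\zeta_1 \longrightarrow H_2/\ker\zeta_2.
\]
This is well defined because $\hat\zeta_1(H_1/\ker\zeta_1)=\zeta_1(H_1)$, so by hypothesis $\tilde\varphi\bigl(\hat\zeta_1(H_1/\ker\zeta_1)\bigr)=\zeta_2(H_2)=\hat\zeta_2(H_2/\ker\zeta_2)$, and $\hat\zeta_2$ is injective onto its image. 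Since each $\hat\zeta_i$ is an injective algebra homomorphism and $\tilde\varphi$ is an algebra isomorphism, $\lambda$ is an algebra isomorphism, and the required diagram commutes by construction.

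For the converse, assuming such $\lambda$ exists making the square commute, I would read off $\tilde\varphi\circ \hat\zeta_1=\hat\zeta_2\circ \lambda$, take images on both sides, and use surjectivity of $\lambda$ together with $\hat\zeta_i(H_i/\ker\zeta_i)=\zeta_i(H_i)$ to conclude
\[
\tilde\varphi\bigl(\zeta_1(H_1)\bigr)=\hat\zeta_2\bigl(\lambda(H_1/\ker\zeta_1)\bigr)=\hat\zeta_2(H_2/\ker\zeta_2)=\zeta_2(H_2),
\]
which is exactly~(\ref{EqImagesOfHiCoincide}).

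There is no real obstacle here; the only minor subtlety is making sure $\tilde\varphi$ is treated as an algebra isomorphism (so that the restriction-corestriction to the images of $\hat\zeta_1$ and $\hat\zeta_2$ remains an algebra map, hence $\lambda$ is automatically multiplicative, unital, and bijective). Unlike the comodule analog (Proposition~\ref{PropositionHcomodEquivCriterion}), no coalgebra compatibility needs to be checked and no basis argument with structure constants is required, because the module case works on the level of algebra maps from $H$ directly into $\End_F(A)$ rather than coefficient expansions in a fixed basis of $A$.
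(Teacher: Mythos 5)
Your proof is correct and follows essentially the same route as the paper: both directions amount to identifying $H_i/\ker\zeta_i$ with $\zeta_i(H_i)$ via the injective maps $\hat\zeta_i$ and taking $\lambda$ to be the (co)restriction of $\tilde\varphi$, respectively reading off the equality of images from surjectivity of $\lambda$. No further comment is needed.
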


\begin{proof}
Suppose there exists $\lambda: H_{1}/{\rm ker}\, \zeta_{1} \mathrel{\widetilde\to} H_{2}/{\rm ker}\, \zeta_{2}$ an isomorphism of algebras such that diagram~(\ref{PropositionHmodEquivCriterion}) is commutative. Hence, as $\lambda$ is in particular surjective, we obtain
$$
\tilde\varphi\Bigl(\zeta_1\bigl(H_1 \bigr)\Bigr) = \tilde\varphi\Bigl(\hat \zeta_1\bigl(H_{1}/{\rm ker}\, \zeta_{1} \bigr)\Bigr) = \hat \zeta_{2}\Bigl(\lambda\bigl(H_{1}/{\rm ker}\, \zeta_{1} \bigr)\Bigr) = \hat \zeta_{2}\bigl(H_{2}/{\rm ker}\, \zeta_{2} \bigr) = \zeta_{2}(H_{2}) 
$$
as desired. Therefore, the module algebra structures on $A_1$ and $A_2$
are equivalent via the isomorphism $\varphi$. 

Conversely, assume now that the isomorphism of algebras $\varphi \colon A_1 \mathrel{\widetilde\to} A_2$ is an equivalence of module algebra structures on $A_1$ and $A_2$. 
Then if we identify $H_1/\ker \zeta_1$ with $\zeta_1(H_1)$ and $H_2/\ker \zeta_2$ with $\zeta_2(H_2)$,
we can take $\lambda$ to be the restriction of $\tilde\varphi$ on $\zeta_1(H_1)$.
In other words, $\lambda(\tilde x) = \tilde y_{x}$, where $y_{x} \in H_{2}$ such that $\tilde\varphi\bigl(\hat \zeta_{1}(\tilde x)\bigr) = \hat\zeta_{2}(\tilde y_{x})$. Then $\lambda$ is a well-defined algebra isomorphism which makes diagram~(\ref{PropositionHmodEquivCriterion}) commutative. 
\end{proof}

Note that if an $H$-module algebra $A$ is unital then the identity element $1_A$ is a common eigenvector for all operators from $H$
and it retains this property for all equivalent module algebra structures. The proposition below
implies that if the original module algebra structure is unital, then all module algebra structures
equivalent to it are unital too.

\begin{proposition}\label{PropositionHopfUnivModUnitality} Let $A$ be an $H$-module algebra for some Hopf algebra $H$. Suppose there exists
the identity element $1_A \in A$ such that $1_A$ is a common eigenvector for all operators from $H$.
Then $A$ is a unital $H$-module algebra.
\end{proposition}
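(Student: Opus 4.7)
The plan is to show that the linear form $\lambda\colon H\to F$ defined by $h\cdot 1_A = \lambda(h)\,1_A$ coincides with the counit $\varepsilon$; this will be extracted from the module axiom, the compatibility~\eqref{EqModCompat}, and the antipode.

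First, I would verify that $\lambda$ is an algebra homomorphism $H\to F$. The associativity of the action gives $\lambda(hg)\,1_A=(hg)\cdot 1_A=h\cdot(g\cdot 1_A)=\lambda(g)\lambda(h)\,1_A$, so $\lambda(hg)=\lambda(h)\lambda(g)$, and the unit axiom for modules yields $1_A = 1_H\cdot 1_A=\lambda(1_H)\,1_A$, hence $\lambda(1_H)=1$. In particular, $\lambda$ is a character of $H$.

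Next, I would apply the compatibility~\eqref{EqModCompat} to the pair $(1_A,1_A)$. Since $1_A\cdot 1_A = 1_A$,
\[
\lambda(h)\,1_A \;=\; h(1_A\cdot 1_A) \;=\; (h_{(1)} 1_A)(h_{(2)} 1_A) \;=\; \lambda(h_{(1)})\lambda(h_{(2)})\,1_A,
\]
and as $1_A\ne 0$ this gives $\lambda(h) = \lambda(h_{(1)})\lambda(h_{(2)})$ for all $h\in H$, i.e.\ $\lambda*\lambda = \lambda$ in the convolution algebra $H^*$.

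Finally, I would use the antipode to cancel. Any character $\lambda\colon H\to F$ admits $\lambda\circ S$ as a two-sided convolution inverse: using that $\lambda$ is multiplicative,
\[
(\lambda*(\lambda\circ S))(h) \;=\; \lambda\bigl(h_{(1)} S(h_{(2)})\bigr) \;=\; \lambda\bigl(\varepsilon(h)1_H\bigr) \;=\; \varepsilon(h),
\]
and symmetrically for the other order. Convolving the identity $\lambda*\lambda = \lambda$ on the right by $\lambda\circ S$ then yields $\lambda = \varepsilon$, which is precisely the unitality condition $h\cdot 1_A = \varepsilon(h)\,1_A$. The only conceptual step is to recognize that the module-algebra axiom evaluated on the pair $(1_A,1_A)$ forces $\lambda$ to be a convolution idempotent; once this is seen, convolution-invertibility of characters via the antipode turns cancellation into the identification $\lambda=\varepsilon$.
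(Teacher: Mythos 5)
Your proposal is correct and follows essentially the same route as the paper: both establish that $\lambda$ is a character with $\lambda(1_H)=1$, derive $\lambda*\lambda=\lambda$ from the module-algebra axiom on $(1_A,1_A)$, and cancel using the antipode (the paper carries out the convolution-inverse cancellation explicitly via the identity $\varepsilon(h_{(2)})1_H=h_{(2)}S(h_{(3)})$, which is exactly your ``convolve on the right by $\lambda\circ S$'' step written out by hand).
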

\begin{proof}
Denote by $\lambda \in H^*$ the linear function such that $h1_A = \lambda(h)1_A$. Since $A$ is an $H$-module algebra, we have $\lambda(h_1 h_2)=\lambda(h_1)\lambda(h_2)$ and $\lambda(h)=\lambda(h_{(1)})\lambda(h_{(2)})$
for all $h,h_1,h_2 \in H$. Moreover, $\lambda(1_H)=1_F$.
 Hence \begin{equation*}\begin{split}
\lambda(h) =\lambda(h_{(1)})\varepsilon(h_{(2)})\lambda(1_H)=
  \lambda(h_{(1)})\lambda(h_{(2)})\lambda(Sh_{(3)}) \\=
 \lambda(h_{(1)})\lambda(Sh_{(2)})  =\lambda(h_{(1)}(Sh_{(2)}))
 =\varepsilon(h)\lambda(1_H)=\varepsilon(h)\end{split}\end{equation*}
 and $A$ is a unital $H$-module algebra.
\end{proof}

It is well known that if $H$ is a finite dimensional Hopf algebra, then
$H^*$ is a Hopf algebra too and the notions of an $H$-module and $H^*$-comodule
algebras coincide (\cite[Proposition 6.2.4]{Danara}).

\begin{proposition}
Let $\rho_1 \colon A \to A \otimes H_1$ and $\rho_2 \colon A \to A \otimes H_2$
be two comodule structures on an algebra $A$
where $H_1$ and $H_2$ are finite dimensional Hopf algebras.
Let $\zeta_i \colon H_i^* \to \End_F(A)$, $i=1,2$, be the corresponding homomorphisms of algebras. Then $\rho_1$ and $\rho_2$ are equivalent
comodule structures if and only if $\zeta_1$ and $\zeta_2$ are equivalent
module structures.
\end{proposition}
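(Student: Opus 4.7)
The plan is to observe that the statement is essentially a tautology once one unfolds the definitions, thanks to the precise correspondence between comodule and module structures in finite dimensions recalled in the preliminaries.

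First, I would recall explicitly that for a finite dimensional Hopf algebra $H$, the map $\zeta \colon H^* \to \End_F(A)$ associated with an $H$-comodule algebra structure $\rho(a) = a_{(0)} \otimes a_{(1)}$ on $A$ is given by $\zeta(h^*)(a) = h^*(a_{(1)})\, a_{(0)}$; under the identification of $H$-comodule algebra structures with $H^*$-module algebra structures (\cite[Proposition 6.2.4]{Danara}, used just before the proposition), this \emph{same} map $\zeta$ is precisely the module algebra structure $H^* \to \End_F(A)$ attached to $A$ as an $H^*$-module algebra. Therefore, for $i=1,2$, the homomorphism $\zeta_i \colon H_i^* \to \End_F(A)$ appearing in the comodule formulation of equivalence coincides with the homomorphism defining the $H_i^*$-module algebra structure on $A$.

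Next, I would compare the two defining conditions. By the definition of support equivalence for comodule algebras, $\rho_1$ and $\rho_2$ are equivalent via $\varphi \colon A \mathrel{\widetilde\to} A$ precisely when
\[
\tilde\varphi\Bigl(\zeta_1\bigl(H_1^*\bigr)\Bigr) = \zeta_2\bigl(H_2^*\bigr),
\]
and by the definition of support equivalence for module algebras applied to $A$ as an $H_i^*$-module algebra, $\zeta_1$ and $\zeta_2$ are equivalent via $\varphi$ exactly under the same equality (where we substitute $H_i^*$ for the ``$H_i$'' of equation~(\ref{EqImagesOfHiCoincide})). By the identification of the $\zeta_i$'s carried out in the first step, these two conditions are literally the same equation in $\End_F(A)$.

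Hence the equivalence of the two notions is immediate. There is no real obstacle to this proof; the only substantive point is the observation (already built into the notation of the paper) that the algebra map $\zeta_i \colon H_i^* \to \End_F(A)$ coming from $\rho_i$ and the module structure map for $A$ as an $H_i^*$-module algebra are one and the same homomorphism. Once this is made explicit, the ``if and only if'' is a direct comparison of the two definitions.
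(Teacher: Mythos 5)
Your proposal is correct and matches the paper's approach: the paper's proof is literally ``Follows directly from the definitions,'' and your write-up simply makes explicit why, namely that under the finite-dimensional correspondence the map $\zeta_i \colon H_i^* \to \End_F(A)$ attached to the coaction $\rho_i$ is the very same homomorphism defining the $H_i^*$-module algebra structure, so the two equivalence conditions coincide verbatim. Nothing further is needed.
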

\begin{proof}
Follows directly from the definitions.
\end{proof}

\subsection{Universal Hopf algebra of a module algebra structure}\label{univmodalg}

Analogously to the case of comodule algebras, if $A$ is an $H$-module algebra for a Hopf algebra $H$ and $\zeta \colon H \to \End_F(A)$
is the corresponding algebra homomorphism,
one can consider the category ${}_{H}\mathcal C_A$ where 
\begin{enumerate}
\item the objects are
$H_1$-module algebra structures on the algebra $A$ for arbitrary
Hopf algebras $H_1$ over $F$ such that $\zeta_1(H_1)=\zeta(H)$
where $\zeta_1 \colon H_1 \to \End_F(A)$
is the algebra homomorphism corresponding to the $H_1$-module algebra structure on $A$;
\item the morphisms from an $H_1$-module algebra structure on $A$
with the corresponding homomorphism $\zeta_1$
to an $H_2$-module algebra structure with the corresponding homomorphism $\zeta_2$
are all Hopf algebra homomorphisms $\tau \colon H_1 \to H_2$
such that the following diagram is commutative:
$$\xymatrix{ \End_{F}(A)   & H_1 \ar[l]_(0.3){\zeta_1} \ar[d]^\tau \\
& \ar[lu]^{\zeta_2}H_2
}
$$
\end{enumerate}

Recall that by $R \colon  \mathbf{Alg}_F \to \mathbf{Hopf}_F$ 
we denote the right adjoint functor for the forgetful functor $U\colon \mathbf{Hopf}_F \to \mathbf{Alg}_F$. The counit of this adjunction is denoted by $\mu\colon UR \Rightarrow \id_{\mathbf{Alg}_F} $.

Suppose $\zeta_1 \colon H_1 \to \End_F(A)$ is a structure of an $H_1$-module algebra on $A$ that is equivalent to $\zeta$. Then $\zeta_1(H_1)=\zeta(H)$
and there exists a unique Hopf algebra homomorphism
$\varphi_1 \colon H_1 \to R(\zeta(H))$ such that the following diagram commutes:
\begin{equation*}
\xymatrix{ H_1 \ar[rd]_{\zeta_1} \ar[r]^-{\varphi_1} & R(\zeta(H)) \ar[d]^{\tilde\mu} \\
 & \zeta(H)}
\end{equation*}
where we denote $\tilde\mu = \mu_{\zeta(H)}$.

Now consider a subalgebra $H_\zeta$ of $R(\zeta(H))$ generated by $\varphi_1(H_1)$ for all such structures
$\zeta_1$. Obviously, $H_\zeta$ is a Hopf algebra. Let $\psi_\zeta := \tilde\mu\bigl|_{H_\zeta}$.

\begin{theorem}\label{TheoremFinExistenceHMod} The homomorphism $\psi_\zeta$ defines on $A$ a structure
of an $H_\zeta$-module algebra which is the terminal object
in ${}_{H}\mathcal C_A$.
\end{theorem}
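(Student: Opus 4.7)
The plan is to verify three things in turn: first, that $\psi_\zeta$ really makes $A$ into an $H_\zeta$-\emph{module algebra}, not merely an $H_\zeta$-module; second, that $\psi_\zeta(H_\zeta) = \zeta(H)$, so $(H_\zeta,\psi_\zeta)$ is an object of ${}_{H}\mathcal{C}_A$; and third, its terminality there.

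For the first claim, I would verify the compatibility~\eqref{EqModCompat} by generation on $H_\zeta$. On an element of the form $\varphi_1(h)$, with $h \in H_1$ coming from some object $(H_1,\zeta_1)$ of ${}_{H}\mathcal{C}_A$, the fact that $\varphi_1$ is a Hopf algebra homomorphism gives $\Delta(\varphi_1(h)) = \varphi_1(h_{(1)}) \otimes \varphi_1(h_{(2)})$, and the identity $\psi_\zeta \circ \varphi_1 = \zeta_1$ reduces the compatibility to the fact that $A$ is already an $H_1$-module algebra via $\zeta_1$. The subset of $H_\zeta$ on which~\eqref{EqModCompat} holds is closed under products: if $x$ and $y$ both satisfy it then, using $\Delta(xy)=x_{(1)}y_{(1)} \otimes x_{(2)}y_{(2)}$ together with $\psi_\zeta$ being an algebra homomorphism, one computes $(xy)(ab) = x\bigl((y_{(1)}a)(y_{(2)}b)\bigr) = (x_{(1)}y_{(1)}a)(x_{(2)}y_{(2)}b) = \bigl((xy)_{(1)}a\bigr)\bigl((xy)_{(2)}b\bigr)$. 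Since $H_\zeta$ is generated as an algebra by the union of the images $\varphi_1(H_1)$, the compatibility propagates to all of $H_\zeta$.

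The second claim is immediate from the construction: $\psi_\zeta(H_\zeta) \subseteq \zeta(H)$ because $\tilde\mu$ takes values in $\zeta(H)$, and the reverse inclusion follows by specialising the construction to the object $(H,\zeta)$ itself. Indeed, the associated canonical Hopf algebra map $\varphi \colon H \to R(\zeta(H))$ satisfies $\tilde\mu \circ \varphi = \zeta$ and $\varphi(H) \subseteq H_\zeta$, hence $\zeta(H) = \psi_\zeta(\varphi(H)) \subseteq \psi_\zeta(H_\zeta)$.

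For terminality, let $(H_1,\zeta_1)$ be any object of ${}_{H}\mathcal{C}_A$ and let $\iota \colon H_\zeta \hookrightarrow R(\zeta(H))$ denote the Hopf algebra inclusion. Since $\varphi_1(H_1) \subseteq H_\zeta$ by the very definition of $H_\zeta$, there is a unique Hopf algebra homomorphism $\bar\varphi_1 \colon H_1 \to H_\zeta$ with $\iota \circ \bar\varphi_1 = \varphi_1$; then $\psi_\zeta \circ \bar\varphi_1 = \tilde\mu \circ \varphi_1 = \zeta_1$, so $\bar\varphi_1$ is a morphism in ${}_{H}\mathcal{C}_A$. For uniqueness, given any candidate $\tau \colon H_1 \to H_\zeta$ satisfying $\psi_\zeta \circ \tau = \zeta_1$, one postcomposes with $\iota$ to obtain a Hopf algebra map $\iota \circ \tau \colon H_1 \to R(\zeta(H))$ with $\tilde\mu \circ (\iota \circ \tau) = \zeta_1$; the universal property of the adjunction $U \dashv R$ then forces $\iota \circ \tau = \varphi_1 = \iota \circ \bar\varphi_1$, and injectivity of $\iota$ yields $\tau = \bar\varphi_1$. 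The main obstacle is the first claim: as $\tilde\mu$ is only an algebra (not a Hopf algebra) homomorphism, there is a priori no reason its restriction to an arbitrary subalgebra of $R(\zeta(H))$ should respect comultiplication-mediated identities on $A$, and one really needs to exploit the Hopf subalgebra structure contributed by each individual $\varphi_1(H_1)$ to handle the generators before extending by closure under products.
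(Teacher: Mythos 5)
Your proposal is correct and follows essentially the same route as the paper: the module algebra axiom is checked on algebra generators $\varphi_1(h)$ using that each $\varphi_1$ is a coalgebra map and then propagated to products (the paper does the identical computation directly on $\varphi_1(h_1)\cdots\varphi_n(h_n)$), and terminality is read off from the universal property of $R$ together with the inclusion $H_\zeta\subseteq R(\zeta(H))$, which is exactly what the paper's closing sentence asserts. Your explicit verification that $\psi_\zeta(H_\zeta)=\zeta(H)$ and the uniqueness argument via injectivity of $\iota$ merely spell out details the paper leaves implicit.
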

\begin{proof}
An arbitrary element $h_0$ of $H_\zeta$ can be presented as a linear combination
of elements $\varphi_1(h_1)\cdots\varphi_n(h_n)$ where $n\in\mathbb N$, $h_i\in H_i$,
$H_i$ are Hopf algebras, $\varphi_i \colon H_i \to R(\zeta(H))$
 are homomorphisms of Hopf algebras, and each homomorphism $\tilde\mu \varphi_i$  defines on $A$ a structure of an $H_i$-module algebra equivalent to $\zeta$.
 Hence \begin{equation*}\begin{split}h_0(ab)=\varphi_1(h_1)\cdots\varphi_n(h_n)(ab)=h_1(h_2(\ldots h_n(ab)\ldots))
 \\=(h_{1(1)}h_{2(1)}\ldots h_{n(1)} a)
(h_{1(2)}h_{2(2)}\ldots h_{n(2)} b) \\= 
(\varphi_1(h_{1(1)})\varphi_2(h_{2(1)})\ldots \varphi_n(h_{n(1)}) a)
(\varphi_1(h_{1(2)})\varphi_2(h_{2(2)})\ldots \varphi_n(h_{n(2)}) b)
\\=\left(\left(\varphi_1(h_1)\cdots\varphi_n(h_n)\right)_{(1)}a\right)
\left(\left(\varphi_1(h_1)\cdots\varphi_n(h_n)\right)_{(2)}b\right)
\text{ for all }a,b\in A.\end{split}\end{equation*}
Note that the comultiplication in the middle is calculated each time in the corresponding Hopf algebra $H_i$.
Also we have used that each $\varphi_i$ is, in particular, a homomorphism of coalgebras.
Therefore $A$ is an $H_\zeta$-module algebra.

Now the choice of $H_\zeta \subseteq R(\zeta(H))$ implies that $(H_\zeta, \psi_\zeta)$ is the terminal object of ${}_H\mathcal C_A$.
\end{proof}

We call $(H_\zeta, \psi_\zeta)$ the \textit{universal Hopf algebra} of $\zeta$.

If $\zeta_1$ and $\zeta_2$ are two module structures on $A$
and $\zeta_2(H_2) \subseteq \zeta_1(H_1)$, we say that $\zeta_1$ is \textit{finer} than $\zeta_2$
and $\zeta_2$ is \textit{coarser} than $\zeta_1$.
Again, $\id_A$ is an equivalence of $\zeta_1$ and $\zeta_2$
if and only if $\zeta_1$ is both finer and coarser than $\zeta_2$.
 Note that 
we could define $H_\zeta$ as a subalgebra of $R(\zeta(H))$
generated by the images of all Hopf algebras whose action is coarser than $\zeta$.
Then the action of $H_\zeta$ would still be equivalent to $\zeta$, but
the proof of Theorem~\ref{TheoremFinExistenceHMod} would imply that
$H_\zeta$ is universal not only among module structures equivalent to $\zeta$, but also
among all the module structures coarser than $\zeta$. The uniqueness of $H_\zeta$
implies that the original $H_\zeta$ satisfies this property too, i.e. 
$H_\zeta$ is universal among all the module structures coarser than $\zeta$.

As in the case of comodule algebra structures, we will see that taking the universal Hopf algebra of a module algebra yields a functor. Indeed, given an algebra $A$ we define the category ${}_{A}\mathcal C$ as follows:
\begin{enumerate}
\item the objects are pairs $(H,\, \zeta)$ where $H$ is a Hopf algebra and $\zeta \colon H \to \End_{F}(A)$ is a left $H$-module algebra structure on $A$;

\item the morphisms between two objects $(H,\, \zeta)$ and $(H',\, \zeta')$ are algebra homomorphisms $\lambda
\colon \zeta(H) \to \zeta'(H')$ such that the following diagram is commutative:
\begin{equation*}
\xymatrix{ \zeta(H)  \ar[d]_{\lambda}  \ar[r]^{\hat\zeta} & \End_{F}(A)\\
\zeta'(H') \ar[ur]_-{\hat\zeta'}  & {}}
\end{equation*}
\end{enumerate} 
(Here $\hat\zeta$ and $\hat\zeta'$ are natural embeddings.)

Note that the existence of an arrow from  $(H,\, \zeta)$ to $(H',\, \zeta')$
just means that $\zeta$ is coarser than $\zeta'$ and ${}_{A}\mathcal C$ is just the preorder
of all module structures on $A$ with respect to the relation ``coarser/finer''.

Moreover, any morphism $\lambda
\colon \zeta(H) \to \zeta'(H')$ in ${}_{A}\mathcal C$ induces a unique Hopf algebra homomorphism $\overline{\lambda} \colon R(\zeta(H)) \to R(\zeta'(H'))$ such that the following diagram commutes:
\begin{equation*}
\xymatrix{ R(\zeta(H))  \ar[d]_{\overline{\lambda}}  \ar[rr]^{\mu_{\zeta(H)}} & {}& {\zeta(H)} \ar[d]_\lambda\\
R(\zeta'(H'))\ar[rr]_-{\mu_{\zeta'(H')}}  & {} &{\zeta'(H')}}
\end{equation*}

We claim that $\overline{\lambda}_{\big|H_\zeta} \subseteq H'_{\zeta'}$. 
Indeed, the existence of the arrow $\lambda$ implies that $\zeta'$ is coarser than $\zeta$.
Hence, if an $H_1$-module structure $\zeta_1$ is coarser than $\zeta$,
then $\zeta_1$ is coarser than $\zeta'$ too and $\bar\lambda$ maps the image of $H_1$
in $R(\zeta(H))$ to $H'_{\zeta'}$.

We can now define the desired functor:
\begin{theorem}\label{functorial2}
There exists a functor $G \colon {}_{A}\mathcal C \to  \mathbf{Hopf}_F$ given as follows:
$$
G(H,\, \zeta) = H_\zeta \,\, {\rm and}\,\, G(\lambda) = \overline{\lambda}_{\big|H_\zeta}.
$$
\end{theorem}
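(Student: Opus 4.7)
The plan is to derive functoriality directly from the universal property that defines $\overline{\lambda}$, i.e.\ from the fact that $R$ is right adjoint to the forgetful functor $U\colon \mathbf{Hopf}_F \to \mathbf{Alg}_F$ with counit $\mu$. All three things I need to verify --- that $G(\lambda)$ is a well-defined Hopf algebra map $H_\zeta \to H'_{\zeta'}$, preservation of identities, and preservation of composition --- will reduce to the uniqueness of the lift of an algebra homomorphism $B \to UR(C)$ along $\mu_C$ to a Hopf algebra homomorphism $R(B) \to R(C)$ provided by the adjunction.

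First I would unpack the definition. Given a morphism $\lambda\colon \zeta(H) \to \zeta'(H')$ in ${}_{A}\mathcal C$, the adjunction produces a unique Hopf algebra homomorphism $\overline{\lambda}\colon R(\zeta(H)) \to R(\zeta'(H'))$ satisfying $\mu_{\zeta'(H')}\circ \overline{\lambda} = \lambda\circ \mu_{\zeta(H)}$. The paragraph immediately preceding the theorem already establishes that $\overline{\lambda}\bigl(H_\zeta\bigr) \subseteq H'_{\zeta'}$, using that any $H_1$-module structure coarser than $\zeta$ is automatically coarser than $\zeta'$ (since the existence of the arrow $\lambda$ in ${}_{A}\mathcal C$ is equivalent to $\zeta'$ being coarser than $\zeta$). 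Hence $G(\lambda) := \overline{\lambda}\bigr|_{H_\zeta}$ is a well-defined Hopf algebra homomorphism $H_\zeta \to H'_{\zeta'}$, and this is the object on which the remaining two verifications rest.

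Next I would check $G\bigl(\id_{(H,\zeta)}\bigr) = \id_{H_\zeta}$. The identity morphism on $(H,\zeta)$ in ${}_{A}\mathcal C$ is $\id_{\zeta(H)}$, and $\id_{R(\zeta(H))}$ trivially satisfies the defining equation $\mu_{\zeta(H)}\circ \id_{R(\zeta(H))} = \id_{\zeta(H)}\circ \mu_{\zeta(H)}$, so by uniqueness $\overline{\id_{\zeta(H)}} = \id_{R(\zeta(H))}$; restricting to $H_\zeta$ gives the identity there. For composition, given composable arrows $\lambda\colon (H,\zeta)\to (H',\zeta')$ and $\lambda'\colon (H',\zeta')\to (H'',\zeta'')$, both $\overline{\lambda'\circ\lambda}$ and $\overline{\lambda'}\circ\overline{\lambda}$ are Hopf algebra homomorphisms $R(\zeta(H))\to R(\zeta''(H''))$ rendering the diagram
\begin{equation*}
\xymatrix{ R(\zeta(H)) \ar[d] \ar[r]^{\mu_{\zeta(H)}} & \zeta(H) \ar[d]^{\lambda'\circ\lambda} \\
R(\zeta''(H'')) \ar[r]_{\mu_{\zeta''(H'')}} & \zeta''(H'')}
\end{equation*}
commutative; indeed, for $\overline{\lambda'}\circ\overline{\lambda}$ this follows by vertically stacking the two naturality squares defining $\overline{\lambda}$ and $\overline{\lambda'}$. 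The uniqueness clause in the adjunction then forces $\overline{\lambda'\circ\lambda} = \overline{\lambda'}\circ\overline{\lambda}$, and restriction to $H_\zeta$ (using that $\overline{\lambda}$ sends $H_\zeta$ into $H'_{\zeta'}$, so that precomposition with $\overline{\lambda'}\bigr|_{H'_{\zeta'}}$ makes sense) yields $G(\lambda'\circ \lambda) = G(\lambda')\circ G(\lambda)$.

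The main obstacle is essentially only bookkeeping: one has to ensure that the restrictions make sense at each stage, i.e.\ that $\overline{\lambda}$ genuinely maps $H_\zeta$ into $H'_{\zeta'}$ (not merely into $R(\zeta'(H'))$) so that the restricted composition $G(\lambda')\circ G(\lambda)$ is defined and equals the restriction of $\overline{\lambda'}\circ\overline{\lambda}$. This has already been settled in the discussion preceding the theorem, so the proof itself amounts to invoking uniqueness of the adjoint lift twice and restricting.
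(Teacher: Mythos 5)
Your proposal is correct and follows essentially the same route as the paper: both arguments rest on the uniqueness clause of the adjoint lift $\overline{\lambda}$ (stacking the two defining squares to identify $\overline{\lambda'}\circ\overline{\lambda}$ with $\overline{\lambda'\circ\lambda}$) and on the observation, already made before the theorem, that $\overline{\lambda}$ carries $H_\zeta$ into $H'_{\zeta'}$. The only cosmetic difference is that you spell out the preservation of identities, which the paper leaves implicit.
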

\begin{proof}
We only need to prove that $G$ respects composition of morphisms. To this end, let $(H,\, \zeta)$, $(H',\, \zeta')$ and $(H'',\, \zeta'')$ be objects in ${}_{A}\mathcal C$ and $\lambda_{1}: \zeta(H) \to
\zeta'(H')$, $\lambda_{2}: \zeta'(H') \to \zeta''(H'')$  two morphisms in ${}_{A}\mathcal C$. Then $G(\lambda_{1}) = \overline{\lambda_{1}}_{\big|H_\zeta}$ and $G(\lambda_{2}) = \overline{\lambda_{2}}_{\big|H'_{\zeta'}}$ where $ \overline{\lambda_{1}}$ and $ \overline{\lambda}_{2}$ are the unique Hopf algebra homomorphisms such that the following diagrams commute: 
\begin{equation}\label{functor4}
\xymatrix{ R(\zeta(H))  \ar[d]_{\overline{\lambda_{1}}}  \ar[rr]^{\mu_{\zeta(H)}} & {} & \zeta(H) \ar[d]^{\lambda_1}\\
R(\zeta'(H'))\ar[rr]_-{\mu_{\zeta'(H')}}  & {} &{\zeta'(H')}}\qquad \xymatrix{ R(\zeta'(H'))  \ar[d]_{\overline{\lambda_{2}}}  \ar[rr]^{\mu_{\zeta'(H')}} & {} & \zeta'(H') \ar[d]^{\lambda_2}\\
R(\zeta''(H''))\ar[rr]_-{\mu_{\zeta''(H'')}}  & {} &{\zeta''(H'')}}
\end{equation}
Moreover,  $G(\lambda_{2} \lambda_{1}) = \overline{\lambda_{2} \lambda_{1}}_{\big|H_\zeta}$ where $\overline{\lambda_{2} \lambda_{1}}$ is the unique Hopf algebra homomorphism which makes the following diagram commutative:
$$
\xymatrix{ R(\zeta(H))  \ar[d]_{\overline{\lambda_{2} \lambda_{1}}}  \ar[rr]^{\mu_{\zeta(H)}} & {} & \zeta(H)
\ar[d]^{\lambda_2 \lambda_1}\\
R(\zeta''(H''))\ar[rr]_-{\mu_{\zeta''(H'')}}  & {} &{\zeta''(H'')}}\
$$
Using~(\ref{functor4}) one can easily check that $\overline{\lambda_{2}} \, \overline{\lambda_{1}}$ makes the above diagram commutative as well and therefore we obtain $G(\lambda_{2} \lambda_{1}) = G(\lambda_{2})G(\lambda_{1})$. This finishes the proof.
\end{proof}

\begin{remark}\label{RemarkManin2}
Inspired by Manin's universal coacting Hopf algebra of an algebra, we can construct the universal acting Hopf algebra of an algebra. More precisely, any given algebra $A$ can be endowed with a $M(A,\,A)$-module algebra structure $\theta$, where $M(A, A)$ is the universal measuring bialgebra of $A$ (see \cite[Chapter VII]{Sw}). By \cite[Theorem 3.1]{CH}, there exists a Hopf algebra $H_{*}(M(A, A))$ together with a bialgebra homomorphism $\beta \colon H_{*}(M(A, A)) \to M(A, A)$ such that for any other bialgebra homomorphism $f\colon H \to M(A, A)$ from a Hopf algebra $H$ to $M(A, A)$ there exists a unique Hopf algebra homomorphism $g\colon H \to H_{*}(M(A, A))$ which makes the following diagram commutative:
\begin{equation*}
\xymatrix{ {H_{*}(M(A, A))}\ar[r]^-(0.4){\beta} & M(A, A) \\
H \ar[u]^{g} \ar[ur]_-{f} & {}}
\end{equation*}
Thus $(H_{*}(M(A,\,A)),\, \theta\,(\beta \otimes 1_{A}))$ is the terminal object in the category whose objects are all module algebra structures on $A$ and the morphisms between two such objects $(H_{1},\, \psi_{1})$ and $(H_{2},\, \psi_{2})$ are Hopf algebra homomorphisms $f \colon H_{1} \to H_{2}$ such that the following diagram is commutative: 
\begin{equation*}
\xymatrix{ {H_{1}\otimes A}\ar[r]^-(0.4){\psi_{1}}\ar[d]_{f \otimes 1_{A}} & A \\
{H_{2} \otimes A}\ar[ur]_-{\psi_{2}} & {}}
\end{equation*}
We call $(H_{*}(M(A,\,A)),\, \theta\,(\beta \otimes 1_{A}))$ the \textit{universal acting Hopf algebra of the algebra $A$}. For further details as well as the construction of the (co)universal acting Hopf algebra of a coalgebra we refer to \cite{AGV}.
\end{remark}

Theorem~\ref{TheoremUniversalHopfOfActionOnHStarIsItself} below
provides an analog of Corollary~\ref{CorollaryUniversalHopfOfCoactionOnItselfIsItself} for module structures. 

\begin{theorem}\label{TheoremUniversalHopfOfActionOnHStarIsItself}
Let $H$ be a Hopf algebra. Denote by $\zeta \colon H \to \End_F(H^*)$ the homomorphism
defined by $(\zeta(h)\lambda)(t):=\lambda(th)$ for all $h,t\in H$, $\lambda\in H^*$.
Then  $\zeta$ is a unital $H$-module structure on the algebra $H^*$ and
the universal Hopf algebra of $\zeta$ is again $(H, \zeta)$.
\end{theorem}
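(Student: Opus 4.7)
The plan is first to verify that $\zeta$ defines a unital $H$-module algebra structure on $H^*$, and then to establish that $(H,\zeta)$ is the terminal object of ${}_H\mathcal{C}_{H^*}$ by showing that, for every equivalent object $(H_1,\zeta_1)$, there is a unique Hopf algebra homomorphism $\tau\colon H_1\to H$ with $\zeta\circ\tau=\zeta_1$.

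The check that $\zeta$ is an algebra homomorphism and that the convolution product on $H^*$ satisfies the module algebra compatibility~\eqref{EqModCompat} reduces to straightforward Sweedler-notation manipulations: the identity $\Delta_H(th)=t_{(1)}h_{(1)}\otimes t_{(2)}h_{(2)}$ yields $(\zeta(h)(\lambda\mu))(t)=\bigl((\zeta(h_{(1)})\lambda)(\zeta(h_{(2)})\mu)\bigr)(t)$, while $\varepsilon_H(th)=\varepsilon_H(t)\varepsilon_H(h)$ gives $\zeta(h)\varepsilon_H=\varepsilon_H(h)\varepsilon_H$, so $\varepsilon_H\in H^*$ is a common eigenvector for $\zeta(H)$. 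Hence $\zeta$ makes $H^*$ a unital $H$-module algebra.

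Next I would show that $\zeta$ is injective: if $\zeta(h)=0$, then evaluating $\zeta(h)\lambda$ at $t=1_H$ gives $\lambda(h)=0$ for every $\lambda\in H^*$, forcing $h=0$ since $H^*$ separates the points of $H$. Consequently, for any object $(H_1,\zeta_1)$ of ${}_H\mathcal{C}_{H^*}$, the equality $\zeta_1(H_1)=\zeta(H)$ determines a unique linear map $\tau\colon H_1\to H$ by $\zeta\circ\tau=\zeta_1$. Multiplicativity and unitality of $\tau$ then follow from the corresponding properties of $\zeta_1$ together with the fact that $\zeta$ is an injective algebra homomorphism. For the counit, Proposition~\ref{PropositionHopfUnivModUnitality} ensures that $\zeta_1$ is itself unital, whence $\zeta_1(x)\varepsilon_H=\varepsilon_{H_1}(x)\varepsilon_H$; comparing this with $\zeta(\tau(x))\varepsilon_H=\varepsilon_H(\tau(x))\varepsilon_H$ yields $\varepsilon_H\circ\tau=\varepsilon_{H_1}$.

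The main obstacle is verifying comultiplicativity $\Delta_H\circ\tau=(\tau\otimes\tau)\circ\Delta_{H_1}$. My plan is to apply the module algebra axiom for $\zeta_1$ and then evaluate at $1_H$: for all $\lambda,\mu\in H^*$ and $x\in H_1$,
\[
\bigl(\zeta_1(x)(\lambda\mu)\bigr)(1_H)=\bigl((\zeta_1(x_{(1)})\lambda)(\zeta_1(x_{(2)})\mu)\bigr)(1_H).
\]
Because $\Delta_H(1_H)=1_H\otimes 1_H$, the right-hand side collapses to $\lambda(\tau(x_{(1)}))\mu(\tau(x_{(2)}))$, while the left-hand side rewrites as $(\lambda\mu)(\tau(x))=\lambda(\tau(x)_{(1)})\mu(\tau(x)_{(2)})$. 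Since $\lambda,\mu$ are arbitrary and $H^*\otimes H^*$ separates $H\otimes H$, this forces $\Delta_H(\tau(x))=(\tau\otimes\tau)\Delta_{H_1}(x)$. Compatibility with the antipode is automatic for bialgebra maps between Hopf algebras, and uniqueness of $\tau$ is immediate from the injectivity of $\zeta$. Thus $(H,\zeta)$ is the terminal object of ${}_H\mathcal{C}_{H^*}$, i.e. the universal Hopf algebra of $\zeta$.
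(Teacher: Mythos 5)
Your proposal is correct and follows essentially the same route as the paper's proof: establish that $\zeta$ is an injective unital module algebra structure, obtain $\tau$ as the unique algebra map with $\zeta\tau=\zeta_1$ (using Proposition~\ref{PropositionHopfUnivModUnitality} for unitality of $\zeta_1$), and then derive counit- and comultiplication-compatibility by evaluating the module algebra axiom at $1_H$ and using that $H^*$ (respectively $H^*\otimes H^*$) separates points. The paper phrases the key identity as $(\zeta_1(h)\lambda)(t)=\lambda(t\tau(h))$ and specializes $t=1_H$, which is exactly your computation.
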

\begin{proof}
Indeed, if $\lambda,\mu \in H^*$, we have \begin{equation*}\begin{split}(\zeta(h)(\lambda\mu))(t)
=(\lambda\mu)(th) =\lambda(t_{(1)} h_{(1)}) \mu(t_{(2)} h_{(2)})
\\ = (h_{(1)}\lambda)(t_{(1)})(h_{(2)}\mu)(t_{(2)})=((h_{(1)}\lambda)(h_{(2)}\mu))(t)
\end{split}\end{equation*}
for all $h,t\in H$ and $h\varepsilon = \varepsilon(h)\varepsilon$. Hence 
$\zeta$ is a unital $H$-module structure on the algebra $H^*$.

Consider a $H_1$-module structure $\zeta_1 \colon H_1 \to \End_F(H^*)$ such
that $\zeta_1(H_1)=\zeta(H)$. Then Proposition~\ref{PropositionHopfUnivModUnitality}
implies that $\zeta_1$ is a unital module structure.
Note that $\zeta$ is an embedding.
Hence if we restrict the codomain $\End_{F}(H^*)$ of $\zeta$ and $\zeta_1$ to $\zeta_1(H_1)=\zeta(H)$,
the map $\zeta$ becomes an isomorphism between $H$ and $\zeta(H)$
and there exists exactly one unital homomorphism $\tau \colon H_1 \to H$ of algebras
such that the diagram below is commutative:
$$\xymatrix{ \End_{F}(H^*)   & H_1 \ar[l]_(0.3){\zeta_1} \ar[d]^\tau \\
& \ar[lu]^{\zeta}H
}
$$
This map $\tau$ satisfies the following equality:
\begin{equation}\label{EqTauTHEqualityHCoactsOnHStar}(\zeta_1(h)\lambda)(t)=\lambda(t\tau(h))
\end{equation}
for all $h\in H_1$, $t\in H$, $\lambda \in H^*$.
In order to show that $\tau$ is a homomorphism of Hopf algebras, it is enough
to show that $\tau$ is a homomorphism of coalgebras.
Substituting in~(\ref{EqTauTHEqualityHCoactsOnHStar}) $t=1_H$ and $\lambda=\varepsilon_H$, we get 
$\varepsilon_H(\tau(h))=(\zeta_1(h)\varepsilon_H)(1_H)=\varepsilon_{H_1}(h)\varepsilon_H(1_H)=\varepsilon_{H_1}(h)$.
Considering arbitrary $\lambda_1,\lambda_2 \in H^*$ and using~(\ref{EqTauTHEqualityHCoactsOnHStar})
once again,
we obtain \begin{equation*}\begin{split}\lambda_1(\tau(h_{(1)}))\lambda_2(\tau(h_{(2)}))=
(\zeta_1(h_{(1)})\lambda_1)(1_H)(\zeta_1(h_{(2)})\lambda_2)(1_H)
\\ = (\zeta_1(h_{(1)})\lambda_1 \otimes \zeta_1(h_{(2)})\lambda_2)(1_H \otimes 1_H)
= ((\zeta_1(h_{(1)})\lambda_1)(\zeta_1(h_{(2)})\lambda_2))(1_H)
\\ =(\zeta_1(h)(\lambda_1\lambda_2))(1_H)=(\lambda_1\lambda_2)(\tau(h))
=\lambda_1(\tau(h)_{(1)})\lambda_2(\tau(h)_{(2)}).\end{split}\end{equation*}
Since $\lambda_1,\lambda_2 \in H^*$ were arbitrary,
we get $\tau(h_{(1)}) \otimes \tau(h_{(2)}) = \tau(h)_{(1)}\otimes \tau(h)_{(2)}$
for all $h\in H$ and $\tau$ is indeed a homomorphism of coalgebras.
Therefore, $(H, \zeta)$ is the universal Hopf algebra of $\zeta$.
\end{proof}

\subsection{$H$-module Hopf--Galois extensions}
\label{SectionHmodHopfGalois}

In the sequel we prove a result analogous to Theorem~\ref{TheoremHComodHopfGaloisUnivHopfAlg} for $H$-module algebras.
Let $A$ be a nonzero unital $H$-module algebra for a Hopf algebra $H$ and consider $\zeta \colon H \to \End_F(A)$ to
be the corresponding $H$-action on $A$. Denote
by $A^H$ its subalgebra of invariants, i.e. 
$A^H := \lbrace a\in A \mid ha=\varepsilon(h)a \text{ for all }
h\in H \rbrace$. The algebra $A$ is a \textit{Hopf--Galois extension} of $A^H$
if the linear map $\mathsf{can} \colon A \mathbin{\otimes_{A^H}} A
\to \Hom_F(H, A)$ defined by: $$\mathsf{can}(a\otimes b)(h) := 
a(hb),$$ is injective and has a dense image in the \textit{finite topology}
(i.e. the compact-open topology on $\Hom_F(H, A)$ defined for the discrete topologies on $H$ and $A$).

\begin{theorem}\label{TheoremHModHopfGaloisUnivHopfAlg}
Let $A/A^H$ be a Hopf--Galois extension. Then $(H,\zeta)$ is the universal Hopf algebra of $\zeta$.
\end{theorem}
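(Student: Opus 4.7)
The strategy is to mirror the proof of Theorem~\ref{TheoremHComodHopfGaloisUnivHopfAlg}, substituting density of $\mathsf{can}$ in the finite topology for the surjectivity used in the comodule case. First I would establish that $\zeta\colon H\to\End_F(A)$ is injective. Indeed, if $\zeta(h)=0$ then $\mathsf{can}(a\otimes b)(h)=a(hb)=0$ for all $a,b$, so every element in the image of $\mathsf{can}$ vanishes at $h$; but density provides elements of the image taking any prescribed value at $h$ (say $1_A$, using any $f\in\Hom_F(H,A)$ with $f(h)=1_A$), forcing $h=0$. Consequently $H$ may be identified with $\zeta(H)\subseteq\End_F(A)$.

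Given any $(H_1,\zeta_1)\in{}_H\mathcal{C}_A$, the assumption $\zeta_1(H_1)=\zeta(H)$ combined with injectivity of $\zeta$ produces a unique algebra homomorphism $\tau\colon H_1\to H$ with $\zeta\circ\tau=\zeta_1$, namely $\tau:=\zeta^{-1}\circ\zeta_1$. Since any bialgebra morphism between Hopf algebras is automatically a Hopf algebra morphism, it remains to check that $\tau$ is a coalgebra homomorphism.

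For the counit, since $A$ is a unital $H$-module algebra, $1_A\in A^H$ is a common eigenvector of all operators in $\zeta(H)=\zeta_1(H_1)$; by Proposition~\ref{PropositionHopfUnivModUnitality}, $A$ is therefore also a unital $H_1$-module algebra. Evaluating $\zeta_1(h_1)(1_A)=\zeta(\tau(h_1))(1_A)$ in two ways yields $\varepsilon_{H_1}(h_1)\,1_A=\varepsilon_H(\tau(h_1))\,1_A$, i.e.\ $\varepsilon_H\circ\tau=\varepsilon_{H_1}$. For comultiplication, applying the $H_1$-module algebra identity to $ab$ and translating through $\zeta\circ\tau=\zeta_1$, then comparing with the $H$-module algebra identity for $\tau(h)$, yields
$$\sum_k \zeta(z'_k)(a)\cdot\zeta(z''_k)(b)=0 \quad\text{for all } a,b\in A,$$
where $\sum_k z'_k\otimes z''_k:=(\tau\otimes\tau)\Delta(h)-\Delta(\tau(h))\in H\otimes H$. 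The goal is to conclude $z=0$. Assuming without loss of generality that the $z'_k$ are linearly independent in $H$, I would fix an index $k_0$ and use density of $\mathsf{can}$ applied to the finite family $z'_1,\dots,z'_n$ to produce $c_j,d_j\in A$ with $\sum_j c_j(z'_k d_j)=\delta_{k,k_0}\,1_A$. Substituting $a=d_j$ in the displayed identity, multiplying on the left by $c_j$, summing over $j$, and using associativity in $A$ collapses the expression to $z''_{k_0}b=0$ for every $b\in A$; injectivity of $\zeta$ then forces $z''_{k_0}=0$, and since $k_0$ was arbitrary, $z=0$.

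The main obstacle is this final extraction. In the comodule proof, bijectivity of $\mathsf{can}$ allowed the author to pull out a single basis element of $H$ with an exact identity of type~(\ref{EqHopfGaloisSokr}); here only density is available, so the separation has to be achieved approximately against a chosen linearly independent selection of the $z'_k$, and the key trick is to exploit the associativity of $A$ together with the left-multiplication by $c_j$ to convert the density datum into the desired Kronecker-type resolution. Everything else is a direct transcription of the comodule argument.
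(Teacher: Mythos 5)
Your proposal is correct and follows essentially the same route as the paper's proof: injectivity of $\zeta$ from density of $\mathsf{can}$, the counit via Proposition~\ref{PropositionHopfUnivModUnitality}, and the comultiplication by using density to manufacture a Kronecker-type resolution $\sum_j c_j(z'_k d_j)=\delta_{k,k_0}1_A$ against a finite linearly independent family, then left-multiplying, using associativity, and invoking $\ker\zeta=0$. The only cosmetic difference is that the paper states the extraction step as a lemma about coefficients $\lambda_{\alpha\beta}$ relative to a fixed basis of $H$, whereas you phrase it for the tensor $(\tau\otimes\tau)\Delta(h)-\Delta(\tau(h))$ with linearly independent first legs; these are equivalent.
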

\begin{proof}
The density of the image of $\mathsf{can}$ implies that for every $h\ne 0$
there exist $a,b\in A$ such that $a(hb)\ne 0$. In other words, $\ker\zeta=0$.
Hence for every Hopf algebra $H_1$ and every $H_1$-module structure $\zeta_1$ on $A$ equivalent to $\zeta$ via $\id_A$, there exists a unique algebra homomorphism $\tau \colon H_1 \to H$
such that the diagram below is commutative:
\begin{equation*}\xymatrix{ H  \ar[r]^(0.35){\zeta} & 
\End_F(A)  \\
H_1 \ar[ru]_(0.4){\zeta_1} \ar[u]^{\tau} & }
\end{equation*}

The only thing we need to prove now is that $\tau$ is a Hopf algebra homomorphism.

First, since $A$ is a unital $H$-module algebra,
by Proposition~\ref{PropositionHopfUnivModUnitality},
$A$ is a unital $H_1$-module algebra too.
Hence
 $\varepsilon(h)
1_A = h1_A=\tau(h)1_A=\varepsilon(\tau(h))1_A$
for all $h\in H_1$. Therefore $\varepsilon(h)= \varepsilon(\tau(h))$.

Let $(h_\alpha)_\alpha$ be a basis in $H$.
We claim that 
for every $\lambda_{\alpha\beta}\in F$
such that only a finite number of them is nonzero,
the condition \begin{equation}\label{EqLambdaAlphaBetaHopfGaloisModUniv}\sum_{\alpha,\beta} \lambda_{\alpha\beta}(h_\alpha a)
(h_\beta b) = 0\text{ for all }a,b\in A\end{equation}
implies $\lambda_{\alpha\beta}=0$ for all $\alpha,\beta$.

Indeed, suppose~\eqref{EqLambdaAlphaBetaHopfGaloisModUniv}
holds.
Let $\Lambda$ be a finite set of indices such that $\lambda_{\alpha\beta}=0$
unless $\alpha,\beta \in \Lambda$.
 The density of the image of $\mathsf{can}$ implies that for every $\gamma$ there exist $a_{\gamma i},b_{\gamma i} \in A$
such that $$\sum_i a_{\gamma i}(h_\alpha b_{\gamma i})=\left\lbrace
\begin{array}{crl} 1_A & \text{ if } &  \alpha = \gamma,\\
0 & \text{ if } & \alpha \ne \gamma \text{ and } \alpha\in \Lambda.
\end{array} \right.$$
Then by~\eqref{EqLambdaAlphaBetaHopfGaloisModUniv}
for every $b\in A$ and every $\gamma$
we have $$ \sum_{\beta\in\Lambda} \lambda_{\gamma\beta} h_\beta b =
\sum_{\substack{\alpha,\beta\in \Lambda, \\ i}} \lambda_{\alpha\beta}a_{\gamma i}(h_\alpha b_{\gamma i})
(h_\beta b) = 0. $$
Now $\ker\zeta=0$ implies $\sum_\beta \lambda_{\gamma\beta} h_\beta = 0$
and $\lambda_{\gamma\beta}=0$ for all $\beta,\gamma$.

In virtue of~\eqref{EqModCompat}, for every $h\in H_1$ and $a,b \in A$
we have $$(\tau(h_{(1)})a)(\tau(h_{(2)})b)=(h_{(1)}a)(h_{(2)}b)=h(ab)=\tau(h)(ab)=
(\tau(h)_{(1)}a)(\tau(h)_{(2)}b).$$
By~\eqref{EqLambdaAlphaBetaHopfGaloisModUniv}
we obtain $\Delta(\tau(h))=(\tau\otimes \tau)\Delta(h)$
for all $h\in H_1$ and $\tau$ is a Hopf algebra homomorphism.
Thus $(H,\zeta)$ is indeed the universal Hopf algebra of $\zeta$.
\end{proof}
\begin{remark}
In the proof of Theorem~\ref{TheoremHModHopfGaloisUnivHopfAlg} we have used only the density of
the image of $\mathsf{can}$.
\end{remark}

\section{Applications}

\subsection{Rational actions of affine algebraic groups}
\label{SectionActionsAffAlgGroups}

Let $G$ be an affine algebraic group over an algebraically closed field $F$ and let $\mathcal O(G)$ be the algebra of regular functions on $G$. Then $\mathcal O(G)$ is a Hopf algebra where the comultiplication~$\Delta$ and the antipode~$S$ are induced by, respectively, the multiplication and taking the inverse element in $G$,
and the counit~$\varepsilon$ of~$\mathcal O(G)$ is just the calculation of the value at $1_G$. (See the details e.g. in~\cite[Chapter~4]{Abe}.) Suppose $G$ is acting \textit{rationally} by automorphisms on a finite dimensional algebra $A$, i.e. for a given basis $a_1, \ldots, a_n$ in $A$ there exist $\omega_{ij} \in \mathcal O(G)$, where $1\leqslant i,j \leqslant n$, such that $g a_j = \sum_{i=1}^n \omega_{ij}(g) a_i$
for all $1\leqslant j \leqslant n$ and $g\in G$.
This implies that $A$ is an $\mathcal O(G)$-comodule algebra where $\rho(a_j) := \sum_{i=1}^n 
 a_i \otimes \omega_{ij}$ for $1\leqslant j \leqslant n$.
  At the same time $A$ is an $\mathcal O(G)^\circ$-module algebra
  where $\mathcal O(G)^\circ$ is the finite dual of $\mathcal O(G)$:
$f^* a_j = \sum_{i=1}^n f^*(\omega_{ij}) a_i$ for all $1\leqslant j \leqslant n$ and $f^* \in \mathcal O(G)^\circ$.  
  The Lie algebra $\mathfrak g$ of $G$ is the subspace consisting of all \textit{primitive} elements of $\mathcal O(G)^\circ$, i.e. $f^* \in \mathcal O(G)^\circ$ such that $\Delta(f^*)=f^*\otimes 1 + 1\otimes f^*$, and the $\mathfrak g$-action on $A$ by derivations is just the restriction of the 
  $\mathcal O(G)^\circ$-action. At the same time the group $G$ itself
  can be identified with the group of \textit{group-like} elements of $\mathcal O(G)^\circ$, i.e.
  $f^* \in \mathcal O(G)^\circ$ such that $\Delta(f^*)=f^*\otimes f^*$ and $f^*\ne 0$.
  
  Hence three Hopf algebras are acting on $A$:
  $\mathcal O(G)^\circ$, $FG$ and $U(\mathfrak g)$ (the universal enveloping algebra of the Lie algebra~$\mathfrak g$). In Theorem~\ref{TheoremAffAlgGrAllEquiv} below
  we prove that all three actions are equivalent in the case when $G$ is connected. In order to show this, we need an auxiliary lemma, that is a generalization of \cite[Lemma~3]{GordienkoKochetov}.
  

  \begin{lemma}\label{LemmaDensityFinDimImage} Let $V$ be a comodule over a coalgebra $C$ with coaction map $\rho\colon V\to V\otimes C$. Suppose that there exists a finite dimensional subcoalgebra $D\subseteq C$ such that $\rho(V)\subseteq V\otimes D$ (e.g. $V$ is finite dimensional itself). Let $\zeta \colon C^* \to \End_F(V)$ be the corresponding action of the algebra $C^*$
  on $V$ defined by $c^* v := c^*(v_{(1)})v_{(0)}$ for $c^* \in C^*$ and $v\in V$.
  Let $A \subseteq C^*$ be a \textbf{dense} subalgebra, i.e. $A^\perp := \lbrace c\in C \mid a(c)=0 
  \text{ for all } a\in A\rbrace = 0$. Then $\zeta(A)=\zeta(C^*)$.
  \end{lemma}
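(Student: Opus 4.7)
The plan is to exploit the hypothesis that $\rho(V)\subseteq V\otimes D$ to reduce the question about the infinite-dimensional algebra $C^*$ to a question about the finite-dimensional algebra $D^*$, where density turns into equality for free.

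First, I would observe that the action $\zeta$ factors through the restriction map. Let $r\colon C^*\to D^*$ be the algebra homomorphism given by restriction of functionals (well-defined and surjective since $D\subseteq C$ is a subspace, indeed a subcoalgebra). For any $c^*\in C^*$ and $v\in V$, writing $\rho(v)=v_{(0)}\otimes v_{(1)}$ with $v_{(1)}\in D$, we have $\zeta(c^*)v=c^*(v_{(1)})v_{(0)}=r(c^*)(v_{(1)})v_{(0)}$, so there is a factorization $\zeta=\bar\zeta\circ r$ for a well-defined $\bar\zeta\colon D^*\to\End_F(V)$. In particular,
\[
\zeta(A)=\bar\zeta(r(A))\quad\text{and}\quad \zeta(C^*)=\bar\zeta(r(C^*))=\bar\zeta(D^*).
\]

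Next I would reduce the claim $\zeta(A)=\zeta(C^*)$ to the equality $r(A)=D^*$. Since $D$ is finite dimensional, so is $D^*$, and a subspace $W\subseteq D^*$ equals $D^*$ precisely when its annihilator $W^\perp=\{d\in D\mid w(d)=0\text{ for all }w\in W\}$ vanishes. Thus it suffices to show that $r(A)^\perp=0$ inside $D$.

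Finally, I would verify this using the density of $A$. If $d\in D$ satisfies $r(a)(d)=0$ for every $a\in A$, then by the very definition of $r$, $a(d)=0$ for every $a\in A$, i.e.\ $d\in A^\perp$. The hypothesis $A^\perp=0$ (inside $C$, hence a fortiori inside $D\subseteq C$) then forces $d=0$. Hence $r(A)=D^*$, and applying $\bar\zeta$ yields $\zeta(A)=\zeta(C^*)$.

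There is no real obstacle here: the only subtlety is recognizing that finite-dimensionality of $D$ upgrades the density condition on $A\subseteq C^*$ into an honest surjection $r|_A\colon A\twoheadrightarrow D^*$; once that is in place, the statement is immediate from the factorization of $\zeta$ through $r$.
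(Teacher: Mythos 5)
Your proof is correct and follows essentially the same route as the paper: both arguments restrict functionals to the finite dimensional subcoalgebra $D$, use finite-dimensionality to upgrade density of $A$ to surjectivity onto $D^*$, and conclude via the factorization of $\zeta$ through the restriction map. Your explicit factorization $\zeta=\bar\zeta\circ r$ is a slightly cleaner packaging of what the paper states as ``$\zeta(c^*)=\zeta(a)$ whenever $c^*$ and $a$ agree on $D$,'' but the content is identical.
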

  \begin{proof}
  It is sufficient to show that the restriction
  of both $C^*$ and $A$ to $D$ coincides with $D^*$. The first one is obvious since every linear function
  on $D$ can be extended to a linear function on the whole $C$. The second is proved as follows.
  The elements of $A$ viewed as linear functions on $D$ form a subspace $W$ in $D^*$. If $W \ne D^*$,
  the finite dimensionality of $D$ implies that there exists $d\in D$, $d\ne 0$, such that $w(d)=0$
  for all $w\in W$. As a consequence, $d \in A^\perp$ and we get a contradiction to the density
  of $A$ in $C^*$. Therefore, the restriction of $A$ to $D$ coincides with $D^*$, we have $\zeta(c^*)=\zeta(a)$ for any $a\in A$, $c^*\in C^*$
  such that $c^*\bigr|_{D} = a\bigr|_{D}$ which finally implies  $\zeta(A)=\zeta(C^*)$.
  \end{proof}

Now we are ready to prove the theorem.

\begin{theorem}\label{TheoremAffAlgGrAllEquiv}
Let $G$ be a connected affine algebraic group over an algebraically closed field $F$ of characteristic $0$ acting rationally by automorphisms on a finite dimensional algebra $A$. Let $\mathfrak g$
 be the Lie algebra of $G$. Then the corresponding
 $FG$-action, $U(\mathfrak g)$-action and $\mathcal O(G)^\circ$-action on $A$ are equivalent.
\end{theorem}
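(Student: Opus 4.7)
The plan is to reduce everything to an application of Lemma~\ref{LemmaDensityFinDimImage}, viewing $FG$ and $U(\mathfrak g)$ as subalgebras of $\mathcal O(G)^\circ \subseteq \mathcal O(G)^*$ and showing that both are \emph{dense} in $\mathcal O(G)^\circ$ in the sense that their annihilators in the coalgebra $\mathcal O(G)$ are zero. Once density is established, Lemma~\ref{LemmaDensityFinDimImage} applied to the coaction $\rho\colon A \to A \otimes \mathcal O(G)$ (whose image lies in $A \otimes D$ for some finite dimensional subcoalgebra $D \subseteq \mathcal O(G)$, since $A$ is finite dimensional) yields
$$\zeta_{FG}(FG) \;=\; \zeta_{\mathcal O(G)^\circ}\bigl(\mathcal O(G)^\circ\bigr) \;=\; \zeta_{U(\mathfrak g)}\bigl(U(\mathfrak g)\bigr)$$
as subalgebras of $\End_F(A)$, which is precisely the condition defining equivalence of the three module structures (via $\id_A$).

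First I would verify density of $FG$. The annihilator of $FG$ inside $\mathcal O(G)$ is $\{f \in \mathcal O(G) \mid f(g)=0 \text{ for all } g\in G\}$, which is zero because regular functions on the affine variety $G$ are determined by their values on points (over the algebraically closed base field). Density of $U(\mathfrak g)$ is the delicate step, and the one where connectedness of $G$ and $\ch F = 0$ genuinely enter. The idea is to identify $U(\mathfrak g)$ with the algebra of distributions supported at the identity: under the adjunction between $\mathcal O(G)^\circ$ and $\mathcal O(G)$, an element of $U(\mathfrak g)$ corresponds to a linear functional on $\mathcal O(G)$ that factors through some quotient $\mathcal O(G)/\mathfrak m_1^{n+1}$, where $\mathfrak m_1$ is the maximal ideal at $1_G$, and conversely every such functional belongs to $U(\mathfrak g)$ (this is where characteristic $0$ is essential; in positive characteristic, $U(\mathfrak g)$ is too small). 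Consequently
$$U(\mathfrak g)^\perp \;=\; \bigcap_{n\geq 0} \mathfrak m_1^{\,n}.$$
Since $G$ is connected, the affine variety $G$ is irreducible, so $\mathcal O(G)$ is a Noetherian integral domain, and Krull's intersection theorem gives $\bigcap_n \mathfrak m_1^{\,n} = 0$. Thus $U(\mathfrak g)$ is dense in $\mathcal O(G)^\circ$.

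With density in hand, Lemma~\ref{LemmaDensityFinDimImage} applies to each of $FG$ and $U(\mathfrak g)$ (taking $C = \mathcal O(G)$ and noting that the hypothesis of the lemma is satisfied since $\rho(A) \subseteq A \otimes D$ for a finite dimensional subcoalgebra $D$), and we conclude that the three images in $\End_F(A)$ coincide; equivalence of the actions follows at once from the definition. The main obstacle is the density of $U(\mathfrak g)$: it requires the concrete identification $U(\mathfrak g) = \bigcup_n (\mathcal O(G)/\mathfrak m_1^{n+1})^*$ (valid in characteristic zero only) together with the Krull intersection argument, which uses that $\mathcal O(G)$ is a Noetherian domain, i.e.\ that $G$ is connected.
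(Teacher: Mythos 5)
Your proposal is correct and follows essentially the same route as the paper: reduce to Lemma~\ref{LemmaDensityFinDimImage}, observe that density of $FG$ in $\mathcal O(G)^*$ is immediate from the definition of regular functions, identify $U(\mathfrak g)$ with the functionals vanishing on a power of the augmentation ideal $\ker\varepsilon$ (the paper cites \cite[Proposition~9.2.5]{Montgomery} for exactly this), and conclude $U(\mathfrak g)^\perp = \bigcap_n (\ker\varepsilon)^n = 0$ via irreducibility of $G$ and Krull's intersection theorem. The only cosmetic difference is that the paper routes the final density claim through \cite[Proposition~9.2.10]{Montgomery} on irreducible components rather than computing the annihilator directly.
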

\begin{proof}
By Lemma~\ref{LemmaDensityFinDimImage}, it is sufficient to prove that the images of $FG$,
$U(\mathfrak g)$ and $\mathcal O(G)^\circ$ are dense in $\mathcal O(G)^*$. For $FG$ this follows
from the definition of $\mathcal O(G)$. If we show that $U(\mathfrak g)$ is dense
in $\mathcal O(G)^*$, we will get automatically that $\mathcal O(G)^\circ$ is dense too, since $U(\mathfrak g) \subseteq \mathcal O(G)^\circ$.

Let $I := \ker \varepsilon$, the set of all polynomial functions from $\mathcal O(G)$ that take zero
at $1_G$. By~\cite[Proposition~9.2.5]{Montgomery}, $$U(\mathfrak g)=\lbrace a \in
\mathcal O(G)^\circ \mid a(I^n)=0 \text{ for some } n\in\mathbb N \rbrace=\mathcal O(G)'$$
where $\mathcal O(G)'$ is the irreducible component of $\varepsilon$ in $\mathcal O(G)^\circ$ (see~\cite[Definition~5.6.1]{Montgomery}). By~\cite[Chapter~II, Section~7.3]{HumphreysAlgGr} the connectedness of $G$ implies that $G$ is irreducible as a variety. Hence by a corollary
of Krull's theorem~\cite[Corollary~10.18]{AtiyahMacdonald} we have $\bigcap_{n\geqslant 1} I^n = 0$
and~\cite[Proposition~9.2.10]{Montgomery} implies that $U(\mathfrak g)=\mathcal O(G)'$
is dense in $\mathcal O(G)^*$. Hence $\mathcal O(G)^\circ$ is dense in $\mathcal O(G)^*$ too
and the $FG$-action, the $U(\mathfrak g)$-action and the $\mathcal O(G)^\circ$-action on $A$ are equivalent.\end{proof}

\subsection{Actions of cocommutative Hopf algebras}
\label{SectionActionsCocommHopfAlgebras}

If we restrict our consideration to actions of cocommutative Hopf algebras, we can define the
notion of a universal cocommutative Hopf algebra of a given action, i.e. such a Hopf algebra
whose action is universal among all actions of cocommutative Hopf algebras equivalent to a given one.
Besides its own interest, the universal cocommutative Hopf algebra can help calculating
the Hopf algebra $H$ that is universal among equivalent actions of all Hopf algebras, not necessarily
cocommutative ones, if one manages to prove that $H$ is cocommutative too, as we do in Proposition~\ref{PropositionDoubleNumbersUniversal} below.

Analogously to the case of arbitrary Hopf algebras, if $A$ is an $H$-module algebra for a 
cocommutative Hopf algebra $H$ and $\zeta \colon H \to \End_F(A)$
is the corresponding algebra homomorphism,
one can consider the category ${}_{H}\mathcal C^{\mathrm{coc}}_A$ where 
\begin{enumerate}
\item the objects are
$H_1$-module algebra structures on the algebra $A$ for cocommutative
Hopf algebras $H_1$ over $F$ such that $\zeta_1(H_1)=\zeta(H)$
where $\zeta_1 \colon H_1 \to \End_F(A)$
is the algebra homomorphism corresponding to the $H_1$-module algebra structure on $A$;
\item the morphisms from an $H_1$-module algebra structure on $A$
with the corresponding homomorphism $\zeta_1$
to an $H_2$-module algebra structure with the corresponding homomorphism $\zeta_2$
are all Hopf algebra homomorphisms $\tau \colon H_1 \to H_2$
such that the following diagram is commutative:
$$\xymatrix{ \End_{F}(A)   & H_1 \ar[l]_(0.3){\zeta_1} \ar[d]^\tau \\
& \ar[lu]^{\zeta_2}H_2
}
$$
\end{enumerate}

Suppose $\zeta_1 \colon H_1 \to \End_F(A)$ is a structure of an $H_1$-module algebra on $A$ that is equivalent to $\zeta$. Then $\zeta_1(H_1)=\zeta(H)$
and there exists a unique Hopf algebra homomorphism
$\varphi_1 \colon H_1 \to R(\zeta(H))$ such that the following diagram commutes:
\begin{equation*}
\xymatrix{ H_1 \ar[rd]_{\zeta_1} \ar[r]^-{\varphi_1} & R(\zeta(H)) \ar[d]^{\tilde\zeta} \\
 & \zeta(H)}
\end{equation*}
where we denote $\tilde\zeta = \mu_{\zeta(H)}$.

Now consider a subalgebra $H^{\mathrm{coc}}_\zeta$ of $R(\zeta(H))$ generated by $\varphi_1(H_1)$ for all such structures $\zeta_1$ where $H_1$ is a cocommutative Hopf algebra. Obviously, $H^{\mathrm{coc}}_\zeta$ is a Hopf algebra. Let $\psi^{\mathrm{coc}}_\zeta := \tilde\zeta\bigl|_{H^{\mathrm{coc}}_\zeta}$.

The same argument as in Theorem~\ref{TheoremFinExistenceHMod}
shows that $(H^{\mathrm{coc}}_\zeta, \psi^{\mathrm{coc}}_\zeta)$ is a terminal object in ${}_{H}\mathcal C^{\mathrm{coc}}_A$.
We call this terminal object the \textit{universal
cocommutative Hopf algebra} of $\zeta$. 

In the case of an algebraically closed field of characteristic $0$,
the Cartier--Gabriel--Kostant theorem (see e.g.~\cite[Corollary~5.6.4 and Theorem~5.6.5]{Montgomery})
allows us to give a concrete description of the universal cocommutative Hopf algebra, using a similar technique as in \cite{GranKadjoVercruysse}. 

\begin{theorem}\label{TheoremUniversalCocommutative} Let $A$ be an $H$-module algebra for a 
cocommutative Hopf algebra $H$ over an algebraically closed field $F$ of characteristic $0$
and let $\zeta \colon H \to \End_F(A)$ be the corresponding algebra homomorphism.
Let $$G_0 := \mathcal U(\zeta(H)) \cap \Aut(A)\text{ and }L_0 := \Der(A) \cap \zeta(H)$$
where $\mathcal U(\zeta(H))$ is the group of invertible elements of the algebra $\zeta(H)$
and $\Der(A)$ is the Lie algebra of derivations of $A$ viewed as a subspace of $\End_F(A)$.
Define $H_0$ to be the Hopf algebra which as a coalgebra is just $U(L_0) \otimes FG_0$,
as an algebra is the smash product $U(L_0) \mathbin{\#} FG_0$ (the $G_0$-action on $L_0$ is the standard one by conjugations) and the antipode is defined by 
$$S(w \mathbin{\#} g) := (1 \mathbin{\#} g^{-1})(Sw\mathbin{\#} 1)$$
for all $w \in U(L_0)$ and $g\in G_0$. Then $(H_0, \zeta_0)$ is the terminal object in 
${}_{H}\mathcal C^{\mathrm{coc}}_A$. (Here $\zeta_0$ is the $H_0$-action on $A$ induced by the $G$- and $L$-action on $A$.)
\end{theorem}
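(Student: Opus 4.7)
The plan is to verify, in three stages, that $(H_0,\zeta_0)$ is a well-defined object of ${}_H\mathcal{C}^{\mathrm{coc}}_A$ and is terminal there, using the Cartier--Gabriel--Kostant decomposition as the main structural tool.

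First I would check that $H_0$ really is a cocommutative Hopf algebra acting on $A$. Since $\zeta(H)$ is an associative subalgebra of $\End_F(A)$ and the commutator of two derivations of $A$ is again a derivation, $L_0 = \Der(A)\cap\zeta(H)$ is a Lie subalgebra of $\End_F(A)$ under the commutator. For $g\in G_0\subseteq\Aut(A)$ and $D\in L_0$, the element $gDg^{-1}$ lies in $\zeta(H)$ (closed under products and inverses in $\mathcal U(\zeta(H))$) and in $\Der(A)$, so $G_0$ acts on $L_0$ by Lie automorphisms. This lifts to an action of $FG_0$ on the Hopf algebra $U(L_0)$ by Hopf algebra automorphisms, which is exactly the data needed for Molnar's construction to make $U(L_0)\mathbin{\#}FG_0$ into a Hopf algebra (with the stated antipode). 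The coalgebra is a tensor product of two cocommutative coalgebras, hence cocommutative. The universal properties of $U(L_0)$ and $FG_0$ then combine to give an algebra homomorphism $\zeta_0\colon H_0\to\End_F(A)$, and the $H_0$-module algebra axiom is immediate since it need only be verified on primitive and on group-like elements: a primitive in $U(L_0)$ is a derivation, while a group-like in $FG_0$ is an algebra automorphism.

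Next I would show that $(H_0,\zeta_0)$ lies in ${}_H\mathcal{C}^{\mathrm{coc}}_A$, i.e.\ that $\zeta_0(H_0)=\zeta(H)$. The inclusion $\zeta_0(H_0)\subseteq\zeta(H)$ is clear from the definitions of $L_0$ and $G_0$. For the reverse inclusion, I invoke the Cartier--Gabriel--Kostant theorem applied to the cocommutative Hopf algebra $H$: one has $H\cong U(P(H))\mathbin{\#}F[G(H)]$. Because $\zeta$ is an algebra map and $H$ is a Hopf algebra, the primitive elements of $H$ act on $A$ by derivations and the group-likes act by algebra automorphisms which are invertible in $\zeta(H)$ via their antipodes; thus $\zeta(P(H))\subseteq L_0$ and $\zeta(G(H))\subseteq G_0$. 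Since $H$ is generated as an algebra by $P(H)\cup G(H)$, the image $\zeta(H)$ is generated as an algebra by $\zeta(P(H))\cup\zeta(G(H))\subseteq L_0\cup G_0\subseteq\zeta_0(H_0)$, as required.

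For terminality, let $(H_1,\zeta_1)\in{}_H\mathcal{C}^{\mathrm{coc}}_A$. Applying Cartier--Gabriel--Kostant to $H_1$ decomposes it as $H_1\cong U(P(H_1))\mathbin{\#}F[G(H_1)]$, and the same argument as above shows $\zeta_1(P(H_1))\subseteq L_0$ and $\zeta_1(G(H_1))\subseteq G_0$. Restricting $\zeta_1$ produces a Lie algebra homomorphism $P(H_1)\to L_0$ and a group homomorphism $G(H_1)\to G_0$; by the universal properties these lift to a Hopf algebra map $U(P(H_1))\to U(L_0)$ and an algebra map $F[G(H_1)]\to FG_0$. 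I would then verify that the $G(H_1)$-action on $P(H_1)$ by conjugation in $H_1$ matches, under $\zeta_1$, the $G_0$-action on $L_0$ by conjugation in $\End_F(A)$; once this is done, the two lifts assemble into a Hopf algebra homomorphism $\tau\colon H_1\to H_0$ with $\zeta_0\tau=\zeta_1$. Uniqueness follows because $H_1$ is generated as an algebra by $P(H_1)\cup G(H_1)$ and the restrictions $\zeta_0|_{L_0}$ and $\zeta_0|_{G_0}$ are injective, forcing the value of $\tau$ on primitives and group-likes.

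The step I expect to be the main obstacle is the compatibility verification in the last paragraph: one has to check that the smash-product structures on both sides are genuinely respected (not merely the underlying algebra or coalgebra structures), so that $\tau$ is a Hopf algebra morphism rather than just an algebra morphism. Everything else reduces, after Cartier--Gabriel--Kostant, to routine bookkeeping about primitive derivations and group-like automorphisms.
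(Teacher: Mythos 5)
Your proposal is correct and follows essentially the same route as the paper: both arguments rest on the Cartier--Gabriel--Kostant decomposition of $H$ and of the competing Hopf algebra $H_1$, deduce $\zeta(G(H))\subseteq G_0$ and $\zeta(P(H))\subseteq L_0$ to get $\zeta_0(H_0)=\zeta(H)$, and then obtain existence and uniqueness of the comparison map $\tau$ from the fact that a Hopf algebra map preserves group-likes and primitives, that the group-likes and primitives of $H_0$ are exactly $G_0$ and $L_0$, and that $H_1$ is generated by $G(H_1)\cup P(H_1)$. Your additional checks (that $L_0$ is a Lie subalgebra stable under conjugation by $G_0$, and that the smash-product structures are compatible) are details the paper leaves implicit rather than a different method.
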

\begin{proof} By the Cartier--Gabriel--Kostant theorem, $H \cong U(L) \mathbin{\#} FG$ where $G$ is the group of group-like elements of $H$ and $L$ is the Lie algebra of primitive elements of $H$.
Since $\zeta(G) \subseteq G_0$, $\zeta(L) \subseteq L_0$ and $G$ and $L$ generate $H$ as an algebra, $G_0$ and $L_0$ generate $\zeta(H)$ and we have $\zeta_0(H_0)=\zeta(H)$, i.e. $\zeta_0$ is equivalent to $\zeta$.

Let $\zeta_1 \colon H_1 \to \End_F(A)$ be an action of another cocommutative Hopf algebra $H_1$ equivalent
to $\zeta$. Again, by the Cartier--Gabriel--Kostant theorem,  $H_1 \cong U(L_1) \mathbin{\#} FG_1$ where $G_1$ is the group of group-like elements of $H_1$ and $L_1$ is the Lie algebra of primitive
elements of $H_1$.
We have to show that there exists a unique Hopf algebra homomorphism $\tau \colon H_1 \to H_0$ such that $\zeta_0\tau =\zeta_1$.

It is not difficult to see (using, say, the techniques of~\cite[Proposition~5.5.3, 2)]{Montgomery}) that the Lie algebra of primitive elements of $H_0=U(L_0)\mathbin{\#} FG_0$ coincides with $L_0 \otimes 1$, which we identify with $L_0$,
while the group of group-like elements of $H_0$ coincides with $1 \otimes G_0$, which we identify with $G_0$.
Since $\tau$ (if it indeed exists) maps group-like elements to group-like ones and primitive elements to primitive ones,
we must have $\tau(G_1) \subseteq G_0$ and $\tau(L_1) \subseteq L_0$. Note that $\zeta_0\bigr|_{G_0} = \id_{G_0}$
and $\zeta_0\bigr|_{L_0} = \id_{L_0}$. Hence $\tau\bigr|_{G_1}$ and $\tau\bigr|_{L_1}$ are uniquely
determined by $\zeta_1\bigr|_{G_1} \colon G_1 \to G_0$ and $\zeta_1\bigr|_{L_1}  \colon L_1 \to L_0$. Since $H_1$ is generated as an algebra by $G_1$ and $L_1$,
there exists at most one Hopf algebra homomorphism $\tau \colon H_1 \to H_0$ such that $\zeta_0\tau =\zeta_1$.
Now we define $\tau$ to be the Hopf algebra homomorphism induced by $\zeta_1\bigr|_{G_1}$ and $\zeta_1\bigr|_{L_1}$.
\end{proof}

Below we provide a criterion for universal cocommutative and universal Hopf algebras to coincide:

\begin{theorem}\label{TheoremCocommUnivIsUniv} Let $A$ be an $H$-module algebra for a 
cocommutative Hopf algebra $H$ over an algebraically closed field $F$ of characteristic $0$
and let $\zeta \colon H \to \End_F(A)$ be the corresponding algebra homomorphism.
Then the universal
cocommutative Hopf algebra $(H^{\mathrm{coc}}_\zeta, \psi^{\mathrm{coc}}_\zeta)$ of $\zeta$ is its universal
Hopf algebra if and only if every Hopf algebra action $\zeta_1 \colon H_1 \to \End_F(A)$
equivalent to $\zeta$ factors through some cocommutative Hopf algebra $H_2$,
i.e. there exists a Hopf algebra action $\zeta_2 \colon H_2 \to \End_F(A)$
and a Hopf algebra homomorphism $\theta$ making the diagram below commutative:
\begin{equation*}\xymatrix{ H_1  \ar[r]^(0.4){\zeta_1} \ar@{-->}[d]^{\theta} & \End_F(A)  \\
H_2 \ar[ru]^{\zeta_2} &    
}
\end{equation*}
\end{theorem}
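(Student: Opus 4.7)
The plan is to prove the two implications by exploiting the universal property of $R$ as a right adjoint to the forgetful functor $\mathbf{Hopf}_F \to \mathbf{Alg}_F$, together with the construction of $H^{\mathrm{coc}}_\zeta$ as a sub-Hopf algebra of $R(\zeta(H))$.

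For the direct implication, suppose that $(H^{\mathrm{coc}}_\zeta, \psi^{\mathrm{coc}}_\zeta)$ is terminal in ${}_H\mathcal{C}_A$. Given any Hopf algebra action $\zeta_1 \colon H_1 \to \End_F(A)$ equivalent to $\zeta$, by the terminal property there exists a Hopf algebra homomorphism $\tau \colon H_1 \to H^{\mathrm{coc}}_\zeta$ with $\psi^{\mathrm{coc}}_\zeta \circ \tau = \zeta_1$. Since $H^{\mathrm{coc}}_\zeta$ is cocommutative by construction, taking $H_2 := H^{\mathrm{coc}}_\zeta$, $\zeta_2 := \psi^{\mathrm{coc}}_\zeta$ and $\theta := \tau$ gives the desired factorization through a cocommutative Hopf algebra.

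For the converse implication, I would show that $(H^{\mathrm{coc}}_\zeta, \psi^{\mathrm{coc}}_\zeta)$ is terminal in ${}_H\mathcal{C}_A$. Given $\zeta_1 \colon H_1 \to \End_F(A)$ equivalent to $\zeta$, the hypothesis yields a factorization $\zeta_1 = \zeta_2 \circ \theta$ with $\theta \colon H_1 \to H_2$ a Hopf algebra homomorphism into a cocommutative Hopf algebra $H_2$. The key move is to replace $H_2$ by the Hopf subalgebra $\theta(H_1)\subseteq H_2$: since homomorphic images of Hopf algebras are Hopf subalgebras and subalgebras of a cocommutative Hopf algebra are cocommutative, $\theta(H_1)$ is a cocommutative Hopf algebra, and the restriction $\zeta_2|_{\theta(H_1)}$ has image $\zeta_2(\theta(H_1)) = \zeta_1(H_1) = \zeta(H)$, so it is an action equivalent to $\zeta$ and an object of ${}_H\mathcal{C}^{\mathrm{coc}}_A$. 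By the terminal property of $H^{\mathrm{coc}}_\zeta$ in ${}_H\mathcal{C}^{\mathrm{coc}}_A$ (which follows exactly as in Theorem~\ref{TheoremFinExistenceHMod}, applied to cocommutative factorizations), there exists a unique Hopf algebra homomorphism $\pi \colon \theta(H_1) \to H^{\mathrm{coc}}_\zeta$ with $\psi^{\mathrm{coc}}_\zeta \circ \pi = \zeta_2|_{\theta(H_1)}$. Then $\tau := \pi \circ \theta \colon H_1 \to H^{\mathrm{coc}}_\zeta$ satisfies $\psi^{\mathrm{coc}}_\zeta \circ \tau = \zeta_1$, so $\tau$ is a morphism in ${}_H\mathcal{C}_A$.

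For the uniqueness of $\tau$, I would use the universal property of $R$: any such $\tau'$ composed with the inclusion $\iota \colon H^{\mathrm{coc}}_\zeta \hookrightarrow R(\zeta(H))$ yields a Hopf algebra homomorphism $\iota\tau' \colon H_1 \to R(\zeta(H))$ with $\mu_{\zeta(H)} \circ (\iota \tau') = \psi^{\mathrm{coc}}_\zeta \circ \tau' = \zeta_1$; by the adjunction, such a lift is unique and equals the canonical $\varphi_1$, so injectivity of $\iota$ forces $\tau'$ to be unique. The main obstacle I anticipate is the seemingly weak form of the hypothesis: it only asks for a factorization through \emph{some} cocommutative $H_2$, with no a priori compatibility between $\zeta_2$ and $\zeta$. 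Passing to the Hopf subalgebra $\theta(H_1)$ to force equivalence with $\zeta$ is precisely what overcomes this, and is the one nontrivial step in the argument.
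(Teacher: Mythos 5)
Your proof is correct and follows essentially the same route as the paper: the one nontrivial step in both arguments is to replace $H_2$ by the Hopf subalgebra $\theta(H_1)$ (equivalently, to assume $\theta$ surjective), which is cocommutative and acts equivalently to $\zeta$, so that the universal property of $(H^{\mathrm{coc}}_\zeta,\psi^{\mathrm{coc}}_\zeta)$ among cocommutative actions applies. The only (harmless) difference is in the final bookkeeping: the paper invokes the universal Hopf algebra $(H_\zeta,\psi_\zeta)$ from Theorem~\ref{TheoremFinExistenceHMod} and identifies it with $(H^{\mathrm{coc}}_\zeta,\psi^{\mathrm{coc}}_\zeta)$ by the standard comparison of two terminal objects, whereas you verify terminality of $(H^{\mathrm{coc}}_\zeta,\psi^{\mathrm{coc}}_\zeta)$ in ${}_H\mathcal C_A$ directly, deriving uniqueness of the comparison map from the cofree property of $R(\zeta(H))$ and the injectivity of the inclusion $H^{\mathrm{coc}}_\zeta\hookrightarrow R(\zeta(H))$ --- both conclusions are valid.
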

\begin{proof} The ``only if'' part follows immediately from the definition
of the universal Hopf algebra of an action.

Suppose that every Hopf algebra action
equivalent to $\zeta$ factors through some cocommutative Hopf algebra $H_2$.
Without loss of generality we may assume that the corresponding homomorphism $\theta$
is surjective and therefore the $H_2$-action is equivalent to $\zeta$.
Since every such $H_2$ is cocommutative, this implies that every Hopf algebra action
equivalent to $\zeta$ factors through $\psi^{\mathrm{coc}}_\zeta$.
Consider now the universal Hopf algebra $(H_\zeta, \psi_\zeta)$ of $\zeta$.
The universal property of
$\psi_\zeta$ and the fact that $\psi_\zeta$ factors through $\psi^{\mathrm{coc}}_\zeta$ too imply that there exist Hopf algebra homomorphisms $\theta_1, \theta_2$ making the diagram below
commutative:
\begin{equation*}\xymatrix{ H^{\mathrm{coc}}_\zeta  \ar[r]^(0.4){\psi^{\mathrm{coc}}_\zeta} \ar@<0.5ex>@{-->}[d]^{\theta_1} & \End_F(A)  \\
H_\zeta \ar[ru]^{\psi_\zeta} \ar@<0.5ex>@{-->}[u]^{\theta_2} &    
}
\end{equation*}

Now the uniqueness of the comparison maps in the definitions
of both $\psi_\zeta$ and $\psi^{\mathrm{coc}}_\zeta$
imply that $\theta_1 \theta_2 = \id_{H_\zeta}$
and $\theta_2 \theta_1 = \id_{H^{\mathrm{coc}}_\zeta}$.
In particular,
we may identify $(H_\zeta, \psi_\zeta)$  with $(H^{\mathrm{coc}}_\zeta, \psi^{\mathrm{coc}}_\zeta)$.
\end{proof}

Now we give an example of an action of a cocommutative Hopf algebra with a non-cocommutative
universal Hopf algebra. This will show that the universal cocommutative Hopf algebra
does not always coincide with the universal Hopf algebra.
In fact, it is sufficient to present an algebra $A$ with an $H$-action equivalent to
an action of a cocommutative Hopf algebra such that $(h_{(1)}a)(h_{(2)}b)\ne (h_{(2)}a)(h_{(1)}b)$
for some $h\in H$, $a,b \in A$. Then such an $H$-action can never factor through a cocommutative algebra. Let us present an example of this situation.

\begin{example}\label{ExampleCocommUnivDifferent} Let $F$ be a field, let $A=F1_A\oplus Fa\oplus Fb \oplus Fab$ be the $F$-algebra
where $a^2=b^2=ba=0$ and let $S_3$ be the $3$rd symmetric group. Consider the equivalent $S_3$- and the $\mathbb Z/4\mathbb Z$-gradings on $A$
defined by $$A^{(\mathrm{id})}:=A^{(\bar 0)}:= F 1_A,\
A^{\bigl((12) \bigr)}:=A^{(\bar 1)}:= F a,$$
$$A^{\bigl((23) \bigr)}:=A^{(\bar 2)}:= F b,\
A^{\bigl((123) \bigr)}:=A^{(\bar 3)}:= F ab.$$
Since the gradings are equivalent, by Theorem~\ref{TheoremGradEquivCriterion} the corresponding $(FS_3)^*$- and $(F(\mathbb Z/4\mathbb Z))^*$-actions are equivalent too,
while $(F(\mathbb Z/4\mathbb Z))^*$ is commutative cocommutative and $(FS_3)^*$ is 
commutative non-cocommutative. In view of the above, to prove that the universal Hopf algebra of these actions is not cocommutative, it suffices to show that there exists an element $h\in (FS_3)^*$ such that
$(h_{(1)}a)(h_{(2)}b) \ne (h_{(2)}a)(h_{(1)}b)$.
Recall that if $G$ is a finite group and $(h_g)_{g\in G}$ is the basis of $(FG)^*$ dual to the basis $(g)_{g\in G}$ of $FG$, the comultiplication on $(FG)^*$ is given by  
$\Delta h_g = \sum\limits_{st=g} h_s \otimes h_t$.
Hence
\begin{equation*}\begin{split}((h_{(123)})_{(1)}a)((h_{(123)})_{(2)}b)
=\sum_{\sigma\rho=(123)} (h_\sigma a)(h_\rho b) = ab  \ne 
\\ ((h_{(123)})_{(2)}a)((h_{(123)})_{(1)}b) = \sum_{\sigma\rho=(123)} (h_\rho a)(h_\sigma b)=0.\end{split}
\end{equation*}
Therefore the dual base element $h_{(123)}$ in $(FS_3)^*$ satisfies the needed property and hence the universal Hopf algebra is not cocommutative.
\end{example}

Recall that Theorem~\ref{TheoremUniversalHopfOfAGradingIsJustUniversalGroup} asserts that the universal Hopf algebra of a coaction that corresponds to a group grading
is just the group algebra of the universal group of the grading.
In Proposition~\ref{PropositionDoubleNumbersUniversal} below we show that the analogous result for group actions does not hold, i.e. the universal Hopf algebra of a group action is not necessarily a group algebra.
In addition, here we demonstrate how the notion of the universal cocommutative Hopf algebra
can be used to calculate the universal Hopf algebra.

\begin{proposition}\label{PropositionDoubleNumbersUniversal}
Let $A = F[x]/(x^2)$ where $F$ is an algebraically closed field, $\ch F =0$. Consider $G$ be the cyclic group of order $2$ with the generator $c$. Define a $G$-action on $A$
by $c \bar x = -\bar x$. Then the universal Hopf algebra
of the corresponding $FG$-action $\zeta_0 \colon FG \to \End_F(A)$ equals
$(H, \zeta)$ where $H=F[y]\otimes FF^\times$
 as an algebra and a coalgebra where
the coalgebra structure on $F[y]$ is defined by
$\Delta(y)=1 \otimes y + y \otimes 1$ and $\varepsilon(y)=0$. The antipode $S$ of $H$
and the action $\zeta \colon H \to \End_F(A)$
are defined by $$S(y^k\otimes \lambda)=(-1)^k y^k\otimes \lambda^{-1}\text{ and }\zeta(y^k\otimes \lambda)\bar x= \lambda$$ for $k\in\mathbb Z_+$ and $\lambda \in F^\times$. 
In particular, $H$ is not a group algebra since it contains non-trivial primitive elements.
\end{proposition}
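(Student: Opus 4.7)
The plan is to identify $(H, \zeta)$ as the universal cocommutative Hopf algebra of $\zeta_0$ via Theorem~\ref{TheoremUniversalCocommutative} and then upgrade it to the full universal Hopf algebra via Theorem~\ref{TheoremCocommUnivIsUniv}. First I would verify that $A$ is a unital $H$-module algebra via $\zeta$ with image $\zeta(H) = \zeta_0(FG)$: a routine check on generators shows that $\zeta(y)$ is the derivation sending $\bar{x} \mapsto \bar{x}$ and $1 \mapsto 0$ (well defined since $\bar{x}^2 = 0$), each $\lambda \in F^\times$ acts as the automorphism $\bar{x} \mapsto \lambda \bar{x}$, $1 \mapsto 1$, and these operators together span the two-dimensional subalgebra $F p_0 + F p_1 \subset \End_F(A)$, where $p_0, p_1$ are the projections onto $F\cdot 1$ and $F\cdot\bar{x}$. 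This coincides with $\zeta_0(FG)$, placing $(H, \zeta)$ in ${}_{FG}\mathcal{C}_A$.

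Next I would apply Theorem~\ref{TheoremUniversalCocommutative}. The group $G_0 = \mathcal{U}(\zeta_0(FG)) \cap \Aut(A)$ consists of elements $\alpha p_0 + \beta p_1$ with $\alpha, \beta \in F^\times$ that fix $1_A$, so $\alpha = 1$ and $G_0 \cong F^\times$ via $\lambda \leftrightarrow p_0 + \lambda p_1$. Every derivation of $A$ kills $1_A$ and, using $\bar{x}^2 = 0$, sends $\bar{x}$ to a scalar multiple of itself; hence $L_0 = \Der(A) \cap \zeta_0(FG) = F \cdot p_1$ is one-dimensional. The $G_0$-action on $L_0$ by conjugation is trivial (a direct computation gives $g p_1 g^{-1} = p_1$), so the smash product in the theorem degenerates to a tensor product, yielding $H^{\mathrm{coc}}_{\zeta_0} = U(L_0) \otimes FG_0 = F[y] \otimes FF^\times = H$, with antipode as stated.

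The remaining step is to conclude $H^{\mathrm{coc}}_{\zeta_0} = H_{\zeta_0}$ via Theorem~\ref{TheoremCocommUnivIsUniv}, which reduces to showing that every equivalent action $\zeta_1 \colon H_1 \to \End_F(A)$ factors through some cocommutative Hopf algebra; I would factor it through $H$ itself. By Proposition~\ref{PropositionHopfUnivModUnitality} such a $\zeta_1$ is unital, giving $\zeta_1(h)(1) = \varepsilon(h)\cdot 1$, and equivalence to $\zeta_0$ forces $\zeta_1(h)(\bar{x}) = \alpha_1(h)\,\bar{x}$ for a unique nontrivial algebra map $\alpha_1 \colon H_1 \to F$. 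The main obstacle is to produce a Hopf algebra homomorphism $\theta \colon H_1 \to H$ satisfying $\chi_1 \theta = \alpha_1$, where $\chi_1 \colon H \to F$ is the algebra map $y \mapsto 1$, $\lambda \mapsto \lambda$ (for then automatically $\varepsilon \theta = \varepsilon$ and $\zeta \theta = \zeta_1$). My strategy is to Jordan-decompose $\alpha_1$ in the abelian convolution group of characters of $H_1$: the semisimple part provides a group-like element of the finite dual $H_1^\circ$ and hence a Hopf map $\theta_g \colon H_1 \to FF^\times$, while the unipotent part, via a formal logarithm, produces a primitive element of $H_1^\circ$ and hence a Hopf map $\theta_a \colon H_1 \to F[y]$; because the images commute in $H$ and $H$ is cocommutative, the convolution $\theta = \theta_a * \theta_g$ is then a Hopf algebra map with $\chi_1 \theta = \alpha_1$. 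The concluding remark that $H$ is not a group algebra follows immediately, since $y \in H$ is a nontrivial primitive.
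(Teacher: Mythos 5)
Your first two paragraphs are correct and coincide with the paper's own argument: the paper likewise identifies $\End_F(A)$ with $M_2(F)$, computes that $\zeta_0(FG)$ is the algebra of diagonal matrices, that $G_0=\mathcal U(\zeta_0(FG))\cap\Aut(A)\cong F^\times$ and that $L_0=\Der(A)\cap\zeta_0(FG)$ is one-dimensional with trivial conjugation action, and then invokes Theorem~\ref{TheoremUniversalCocommutative} to identify $(H,\zeta)$ as the universal \emph{cocommutative} Hopf algebra of $\zeta_0$. The difficulty is entirely concentrated in your third paragraph, and there your argument has genuine gaps.

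First, ``Jordan decomposition in the abelian convolution group of characters of $H_1$'' is not available: for an arbitrary Hopf algebra $H_1$ the set of algebra maps $H_1\to F$ is merely an abstract group under convolution --- it need not be abelian (for $H_1=(FS_3)^*$, which does act on $A$ equivalently to $\zeta_0$ via the character $\alpha_1=\mathrm{ev}_{(12)}$, this group contains $S_3$), and a semisimple/unipotent decomposition of $\alpha_1$ presupposes an algebraic-group structure that exists only once one knows $H_1$ (or a suitable quotient of it) is commutative or cocommutative. Second, even granting such a decomposition, a single group-like element of $H_1^\circ$ does not produce a Hopf algebra map $H_1\to FF^\times$ (one needs a complete system of convolution-orthogonal idempotents in $H_1^*$, i.e.\ a diagonalization of the coaction), and a primitive element $\delta$ of $H_1^\circ$ produces a map $h\mapsto\sum_k\delta^{*k}(h)\,y^k/k!$ into $F[y]$ only if $\delta$ is locally convolution-nilpotent; neither point is addressed. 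Third, and most seriously, the final assembly fails as stated: for two coalgebra maps $\theta_a,\theta_g\colon H_1\to H$ with commuting images, the convolution $\theta_a*\theta_g$ is automatically an \emph{algebra} map, but it is a \emph{coalgebra} map only under a cocommutation condition on the \emph{source}, namely $\theta_a(h_{(1)})\otimes\theta_g(h_{(2)})=\theta_a(h_{(2)})\otimes\theta_g(h_{(1)})$ for all $h\in H_1$; cocommutativity of the target $H$ contributes nothing here, and $H_1$ itself need not be cocommutative. This is exactly the obstruction the paper's proof is designed to remove: it forms the ideal $I=\bigcap_{k\in\mathbb Z}\ker(\varphi^k)$ of the convolution powers of the character $\varphi=\alpha_1$, proves via the identity $H_1\otimes I+I\otimes H_1=\bigcap_{k,\ell\in\mathbb Z}\ker(\varphi^k\otimes\varphi^\ell)$ that $I$ is a Hopf ideal through which $\zeta_1$ factors and that $H_1/I$ is cocommutative, and only then concludes by Theorem~\ref{TheoremCocommUnivIsUniv}. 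Some such reduction to a cocommutative quotient must precede any attempt to decompose $\alpha_1$ and reassemble the pieces.
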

\begin{proof}
To this end, we first identify $\End_F(A)$ with the algebra
$M_2(F)$ of $2\times 2$ matrices by fixing the basis 
$\bar 1, \bar x$ in $A$. Then $\zeta_0(1) = \left(\begin{smallmatrix} 1 & 0 \\
0 & 1\end{smallmatrix}\right)$ and $\zeta_0(c)= \left(\begin{smallmatrix} 1 & 0 \\
0 & -1\end{smallmatrix}\right)$. Since $\zeta_0(FG)$ is the linear span of $\zeta_0(1)$
and $\zeta_0(c)$, we get that $\zeta_0(FG)$ is the subalgebra of all diagonal matrices in $M_2(F)$.
Moreover $\Aut(A) = \left\lbrace\left(\begin{smallmatrix} 1 & 0 \\
0 & \lambda \end{smallmatrix}\right) \mathbin{\bigl|} \lambda \in F^\times \right\rbrace$
and $\Der(A) = \left\lbrace\left(\begin{smallmatrix} 0 & 0 \\
0 & \lambda \end{smallmatrix}\right) \mathbin{\bigl|} \lambda \in F \right\rbrace$.
Hence $\mathcal U(\zeta_0(FG)) \cap \Aut(A)=\Aut(A) \cong F^\times$
and $\zeta_0(FG) \cap \Der(A)$ is a one dimensional Lie algebra
whose universal enveloping algebra is isomorphic to $F[y]$.
Now Theorem~\ref{TheoremUniversalCocommutative} implies
that the universal cocommutative Hopf algebra of $\zeta_0$ is indeed $(H,\zeta)$.

By Theorem~\ref{TheoremCocommUnivIsUniv}, in order to show that $(H,\zeta)$ is universal as a not necessarily cocommutative Hopf algebra,
it is sufficient to show that any other Hopf algebra action $\zeta_1 \colon H_1 \to \End_F(A)$
equivalent to $\zeta_0$, factors through a cocommutative Hopf algebra.

Suppose $\zeta_1 \colon H_1 \to \End_F(A)$ is an $H_1$-module structure on $A$
equivalent to $\zeta_0$. Then $\zeta_1(H_1)$ is the algebra of all diagonal $2\times 2$ matrices.
In particular, $\bar x$ is a common eigenvector of operators from $H_1$.
Define $\varphi \in H_1^*$ by $h\bar x = \varphi(h) \bar x$ for all $h\in H_1$. Then $\varphi \colon H_1 \to F$
is a unital algebra homomorphism and therefore a group-like element  of the Hopf algebra $H^\circ$.
At the same time
Proposition~\ref{PropositionHopfUnivModUnitality} implies
 $h\bar 1 = \varepsilon(h) \bar 1$  for all $h\in H_1$.

The powers $\varphi^k$, $k\in \mathbb Z$, of the element $\varphi$ in the group $G(H^\circ)$ of the group-like elements of $H^\circ$ are defined by the formula
  $$\varphi^k(h)=\left\lbrace\begin{array}{ccc}\varphi(h_{(1)})\dots \varphi(h_{(k)})
& \text{ for } & k \geqslant 1, \\
\varepsilon(h)  & \text{ for } & k = 0,\\
 \varphi(Sh_{(1)})\dots \varphi(Sh_{(k)})
& \text{ for } & k \leqslant -1\end{array}\right.$$
for every $h\in H_1$.
Let $$I := \bigcap\limits_{k\in\mathbb Z} \ker \bigl(\varphi^k\bigr).$$
Since all $\varphi^k \colon H_1 \to F$ are unital algebra homomorphisms,
$I$ is an ideal. Moreover, as we include in the definition of $I$ the negative powers of $\varphi$ too,
we have $SI\subseteq I$.

We claim that $I$ is a coideal too, i.e. $\Delta(I) \subseteq H_1 \otimes I + I \otimes H_1$.
 We first notice that \begin{equation}\label{EqH_1IIH_1}H_1 \otimes I + I \otimes H_1 = \bigcap\limits_{k,\ell\in\mathbb Z} \ker \bigl(\varphi^k \otimes \varphi^\ell\bigr).\end{equation} The inclusion 
 $$H_1 \otimes I + I \otimes H_1 \subseteq \bigcap\limits_{k,\ell\in\mathbb Z} \ker \bigl(\varphi^k \otimes \varphi^\ell\bigr)$$ is obvious. In order to prove the converse inclusion, we choose
 a basis $(a_\alpha)_{\alpha \in \Lambda_1 \cup \Lambda_2}$ in $H_1$ that contains a basis $(a_\alpha)_{\alpha \in \Lambda_1}$ of $I$.
Suppose $$w:=\sum_{\alpha,\beta \in \Lambda_1 \cup \Lambda_2} \gamma_{\alpha\beta}\, a_{\alpha}\otimes a_{\beta}
\in \bigcap\limits_{k,\ell\in\mathbb Z} \ker \bigl(\varphi^k \otimes \varphi^\ell\bigr)$$
where only a finite number of coefficients $\gamma_{\alpha\beta} \in F$ are nonzero. 
From the definition of $I$ we obtain $$\sum_{\alpha,\beta \in \Lambda_1 \cup \Lambda_2} \gamma_{\alpha\beta} \varphi^k (a_{\alpha}) a_{\beta} \in I \text{ for every }k\in\mathbb Z.$$
The properties of $a_\alpha$ imply that for all $\beta \in \Lambda_2$  $$\sum_{\alpha \in \Lambda_1 \cup \Lambda_2} \gamma_{\alpha\beta} \varphi^k (a_{\alpha})=0,$$
i.e. $$\sum_{\alpha \in \Lambda_1 \cup \Lambda_2} \gamma_{\alpha\beta} a_{\alpha}\in I.$$
Hence $\gamma_{\alpha\beta} = 0$ for all $\alpha,\beta \in \Lambda_2$.
Therefore $w\in H_1 \otimes I + I \otimes H_1$
and \eqref{EqH_1IIH_1} follows.
  Since for every $k,\ell \in\mathbb Z$ and $h\in I$ we have 
  $$(\varphi^k \otimes \varphi^\ell)(h_{(1)}\otimes h_{(2)})
  =\varphi^{k+\ell}(h)=0,$$ the $S$-invariant ideal $I$ is a coideal and therefore a Hopf ideal.
  
  Recall that $I \subseteq \ker\varepsilon \cap \ker\varphi$. Hence the homomorphism $\zeta_1$ factors through the Hopf algebra $H_1/I$, i.e. the following diagram is commutative:
\begin{equation*}\xymatrix{ H _1 \ar[r]^(0.45){\pi} \ar[rd]_{\zeta_1}& H_1/I \ar@{-->}[d] \\
& \End_F(A)
}
\end{equation*}
where $\pi \colon H_1 \twoheadrightarrow H_1/I$
 is the natural surjective homomorphism. 
 
  In order to show that $H_1/I$ is cocommutative, it is sufficient to prove 
  that $$h_{(1)}\otimes h_{(2)}-
 h_{(2)}\otimes h_{(1)} \in H_1 \otimes I + I \otimes H_1
\text{ for every }h\in H_1.$$ By~\eqref{EqH_1IIH_1} it is sufficient to check that
$$(\varphi^k \otimes \varphi^\ell)(h_{(1)}\otimes h_{(2)}-h_{(2)}\otimes h_{(1)})=0.$$
Indeed, $$(\varphi^k \otimes \varphi^\ell)(h_{(1)}\otimes h_{(2)}-h_{(2)}\otimes h_{(1)})=
\varphi^{k+\ell}(h) - \varphi^{k+\ell}(h) = 0.$$
Therefore $H_1/I$ is cocommutative, as desired.
\end{proof}

\subsection{Unital module structures on $F[x]/(x^2)$}\label{SubsectionDoubleNumbers}

In this section we classify all unital module structures on $F[x]/(x^2)$.
Throughout, $H_{4}$ denotes the Sweedler Hopf algebra. Recall that $H_{4}$ is generated as an algebra by two elements $c$ and $v$ subject to the relations $c^{2} = 1$, $v^{2} = 0$ and $vc = -cv$ while the coalgebra structure and the antipode are given as follows:
$$
\Delta(c)=c\otimes c,\quad \Delta(v)=c\otimes v + v\otimes 1, \quad S(c)=c,\quad S(v)=-cv.
$$

\begin{theorem}\label{TheoremDoubleNumbersClassify}
Let $\zeta \colon H \to \End_F(A)$ be a unital $H$-module structure on $A = F[x]/(x^2)$ where $H$ is a Hopf algebra, $\ch F \ne 2$.
Then $\zeta$ is equivalent to one of the following module
structures on $A$:
\begin{enumerate}
\item the action of $F$ on $A$ by the multiplication by scalars;

\item the $FG$-action, where $G=\langle c \rangle_2$, defined by $c \bar x = -\bar x$;

\item the $H_4$-action defined by
$c \bar 1 = \bar 1$, $c \bar x = -\bar x$, $v \bar 1 = 0$,
$v\bar x = \bar 1$. 
\end{enumerate}
\end{theorem}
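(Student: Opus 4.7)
The plan is to identify $\End_F(A) \cong M_2(F)$ via the basis $\bar 1, \bar x$ and classify the possible images $\zeta(H) \subseteq M_2(F)$; since support equivalence via $\id_A$ depends only on the image $\zeta(H)$ and each of the three listed actions has an $\Aut(A)$-stable image, this suffices to determine the equivalence class of $\zeta$. By Proposition~\ref{PropositionHopfUnivModUnitality}, the unitality assumption gives $h\bar 1 = \varepsilon(h)\bar 1$ for all $h \in H$, so $\zeta(H)$ lies in the subalgebra $T \subset M_2(F)$ of upper triangular matrices. Writing $h\bar x = \alpha(h)\bar 1 + \beta(h)\bar x$ with $\alpha, \beta \in H^*$, the homomorphism property of $\zeta$ forces $\beta$ to be a character of $H$, while the module-algebra axiom applied to $\bar x^2 = 0$ yields the convolution identities
\begin{equation*}
\alpha * \alpha = 0, \qquad \alpha * \beta + \beta * \alpha = 0
\end{equation*}
in $H^*$.

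Next, I would split into cases according to $(\alpha, \beta)$. If $\alpha \equiv 0$ and $\beta = \varepsilon$, then $\zeta(H) = F \cdot I$ and $\zeta$ is equivalent to the scalar action of $F$ on $A$ in item~(1). If $\alpha \equiv 0$ and $\beta \neq \varepsilon$, then $\zeta(H)$ is the diagonal subalgebra $F I + F(e_{11} - e_{22})$, which also realizes the $FG$-action in item~(2). Finally, suppose $\alpha \not\equiv 0$; substituting $\beta = \varepsilon$ in the second convolution identity would give $2\alpha = 0$, forcing $\alpha = 0$ by the hypothesis $\ch F \neq 2$, so we must have $\beta \neq \varepsilon$ as well.

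The main obstacle is to show that in this last case $\zeta(H)$ is the full three-dimensional algebra $T$, and not a two-dimensional subalgebra of $T$. Suppose for contradiction that $\dim \zeta(H) = 2$; picking $h_0$ with $\alpha(h_0) \neq 0$, every $\zeta(h) - \varepsilon(h) I$ is a scalar multiple of $\zeta(h_0) - \varepsilon(h_0) I$, and comparing the $(1,2)$- and $(2,2)$-entries forces $\alpha = \lambda(\beta - \varepsilon)$ for some nonzero $\lambda \in F$. Substituting this expression into the two convolution identities gives simultaneously $\beta * \beta = 2\beta - \varepsilon$ and $\beta * \beta = \beta$, hence $\beta = \varepsilon$, contradicting $\beta \neq \varepsilon$. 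Therefore $\zeta(H) = T$, which coincides with the image of the $H_4$-action from item~(3) (whose generators $1, c, v$ map to $I$, $\mathrm{diag}(1, -1)$, $e_{12}$, spanning $T$). This exhausts all possibilities and completes the classification.
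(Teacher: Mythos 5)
Your proposal is correct and follows essentially the same route as the paper's proof: identify $\End_F(A)$ with $M_2(F)$ via the basis $\bar 1,\bar x$, encode $\zeta(h)$ by two functionals on $H$, extract the two convolution identities from $h(\bar x^2)=0$, and case-split on $\dim\zeta(H)$, ruling out the two-dimensional non-diagonal case by deriving the contradictory relations $\beta*\beta=2\beta-\varepsilon$ and $\beta*\beta=\beta$ (the paper performs the identical computation with the roles of the two functionals swapped). The only cosmetic differences are the labelling of the functionals and the order of the case analysis, so no further comment is needed.
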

\begin{proof}
Again, fix the basis $\bar 1$, $\bar x$ and identify $\End_F(A)$ with $M_2(F)$.
Since $\zeta$ is unital, there exist $\alpha,\beta \in H^*$
such that $\zeta(h)=\left(\begin{smallmatrix} \varepsilon(h) & \beta(h) \\
0 & \alpha(h) \end{smallmatrix}\right)$ for every $h\in H$.

Note that~(\ref{EqModCompat}) implies
\begin{equation*}\begin{split}
0 = h(\bar x^2)=(h_{(1)} \bar x)(h_{(2)} \bar x)
=(\beta(h_{(1)})\bar 1 + \alpha(h_{(1)}) \bar x)(\beta(h_{(2)})\bar 1 + \alpha(h_{(2)}) \bar x)
\\ =\beta(h_{(1)})\beta(h_{(2)})\bar 1 + (\alpha(h_{(1)})\beta(h_{(2)}) + \beta(h_{(1)})\alpha(h_{(2)}))\bar x
\end{split}\end{equation*}
and
\begin{equation}\label{EqBetaBeta}\beta(h_{(1)})\beta(h_{(2)})=0,\end{equation}
\begin{equation}\label{EqAlphaBeta}\alpha(h_{(1)})\beta(h_{(2)}) + \beta(h_{(1)})\alpha(h_{(2)})=0\end{equation}
for all $h\in H$.

We have $\dim \zeta(H)=\dim \langle \alpha, \beta, \varepsilon \rangle_F$.

If $\dim \zeta(H) = 3$, then $\zeta(H)$ is the subalgebra of all upper triangular matrices
and $\zeta$ is equivalent to (3).

Suppose $\dim \zeta(H) \leqslant 2$.
If $\alpha$ and $\varepsilon$ are linearly dependent,
then $\varepsilon\ne 0$ implies $\alpha = \gamma \varepsilon$ for some $\gamma \in F$.
Therefore, $1=\alpha(1_H) = \gamma \varepsilon(1_H)=\gamma$ and $\alpha = \varepsilon$.
By~(\ref{EqAlphaBeta}), $2\beta(h)=0$ and $\beta = 0$. Then $\zeta(H)$ is the algebra
of all scalar matrices and $\zeta$ is equivalent to (1).

Suppose $\dim \zeta(H) \leqslant 2$, but $\alpha$ and $\varepsilon$ are linearly independent.
Then $\beta = \lambda \varepsilon + \gamma \alpha$
for some $\lambda,\gamma \in F$.
Applying both sides of this equality to $1_H$,
we get $0=\lambda+\gamma$ since $\zeta(1_H)=\left(\begin{smallmatrix} 1 & 0 \\
0 & 1 \end{smallmatrix}\right)$.
Therefore $\beta= \lambda(\varepsilon-\alpha)$ and~(\ref{EqBetaBeta}) and~(\ref{EqAlphaBeta})
imply
\begin{equation}\label{EqDoubleNumbersLambdaVarepsilon}\lambda^2\bigl(\varepsilon(h)-2
\alpha(h)+ \alpha(h_{(1)})\alpha(h_{(2)})\bigr)=0,\end{equation}
\begin{equation}\label{EqDoubleNumbersLambdaAlpha}2\lambda\left(\alpha(h) - \alpha(h_{(1)})\alpha(h_{(2)})\right)=0\end{equation}
for all $h\in H$. Suppose $\lambda \ne 0$.
Since $\ch F \ne 2$, (\ref{EqDoubleNumbersLambdaAlpha}) implies
$$\alpha(h_{(1)})\alpha(h_{(2)})= \alpha(h).$$
Then from~(\ref{EqDoubleNumbersLambdaVarepsilon}) we get 
$$\alpha(h)=\varepsilon(h)$$
which contradicts with the linear independence of
 $\alpha$ and $\varepsilon$.
Therefore, $\lambda = 0$ and $\beta = 0$.
Thus $\zeta(H)$ is the subalgebra of all diagonal matrices
and $\zeta$ is equivalent to (2).
\end{proof}

\subsection{Polynomial $H$-identities}\label{SubsectionHPI}

In this section we will show that the module algebras classified in the previous section all satisfy the analog of Amitsur's conjecture for
polynomial $H$-identities.

We begin with a quick introduction into the theory of polynomial $H$-identities 
(see also~\cite{BahturinLinchenko, BereleHopf, ASGordienko13}).
All algebras in this section are associative, but not necessarily unital.

We denote by $F\langle X \rangle$ the \textit{free non-unital associative algebra} on the
countable set $X=\lbrace x_1, x_2, x_3, \ldots\rbrace$, i.e. the algebra of all polynomials
without a constant term in non-commuting variables $x_1, x_2, x_3, \ldots$ with coefficients from $F$.
Then $F\langle X \rangle = \bigoplus_{n=1}^\infty F\langle X \rangle^{(n)}$
where $F\langle X \rangle^{(n)}$ is the linear span of monomials of total degree $n$.

Let $H$ be a Hopf algebra. Then the \textit{free non-unital associative $H$-module
algebra} on $X$ is $$F\langle X|H \rangle := \bigoplus_{n=1}^\infty F\langle X \rangle^{(n)} \otimes H^{{}\otimes n}$$ where $$h \cdot v\otimes h_1 \otimes \dots \otimes h_n := v
\otimes h_{(1)}h_1 \otimes \dots \otimes h_{(n)}h_n \text{ for } v\in F\langle X \rangle^{(n)}
\text{ and } h_i\in H$$
and $(v_1 \otimes w_1)(v_2 \otimes w_2):=v_1 v_2 \otimes w_1 \otimes w_2$
for $v_1\in F\langle X \rangle^{(k)}$, $v_2\in F\langle X \rangle^{(\ell)}$,
$w_1 \in H^{{}\otimes k}$, $w_2 \in H^{{}\otimes \ell}$, $k,\ell\in\mathbb N$.
We use the notation $x_{i_1}^{h_1}\ldots x_{i_n}^{h_n} := x_{i_1}\ldots x_{i_n} 
\otimes h_1 \otimes h_2 \otimes \dots \otimes h_n$.

Let $(h_{\alpha})_{\alpha\in \Lambda}$ be a basis in $H$. Then $F\langle X|H \rangle$ is 
isomorphic as an algebra to the free non-unital associative algebra on the set 
$\lbrace x_n^{h_\alpha} \mid n\in\mathbb N,\  \alpha\in \Lambda\rbrace$.

Now we identify $X$ with the subset $\lbrace x_n^1 \mid n\in\mathbb N\rbrace \subseteq F\langle X|H \rangle$. Denote by $\iota \colon X \to F\langle X|H \rangle$ the corresponding embedding.
Then $F\langle X|H \rangle$ satisfies the following universal property:
for any map $\varphi \colon X \to B$ where $B$ is an $H$-module algebra
there exists a unique homomorphism of algebras and $H$-modules $\bar \varphi \colon F\langle X | H \rangle \to B$ such that the following diagram commutes:

\begin{equation*}
\xymatrix{ X  \ar[r]^(0.4){\iota} \ar[rd]_{\varphi}& F\langle X|H \rangle \ar[d]^{\bar\varphi} \\
& B}
\end{equation*}
On the generators $\bar\varphi$ is defined as follows:
$$\bar\varphi\left(x_{i_1}^{h_1}\ldots x_{i_n}^{h_n}\right)=(h_1\varphi(x_{i_1}))\ldots 
(h_n\varphi(x_{i_n})).$$
\begin{remark}
Note that in a similar way one can define the free $H$-module algebra on any set. Then $F\langle - | H \rangle$ becomes the left adjoint functor to the forgetful functor from the category of not necessarily
unital associative $H$-module algebras to the category of sets.
\end{remark}

The elements of $F\langle X|H \rangle$ are called \textit{$H$-polynomials}.
For a given $H$-module algebra $A$ the intersection $\Id^H(A)$ of the kernels
of all possible $H$-module algebra homomorphisms $F\langle X|H \rangle \to A$ is called
the set of \textit{polynomial $H$-identities}. Taking into account the universal property
of $F\langle X|H \rangle$, it is easy to see that $\Id^H(A)$ consists of all $H$-polynomials
that vanish under all substitutions of elements of $A$ for their variables.
In addition, $\Id^H(A)$ is an $H$-invariant ideal of $F\langle X|H \rangle$ invariant under
all endomorphisms of $F\langle X|H \rangle$ as an $H$-module algebra.

The algebra $F\langle X|H \rangle / \Id^H(A)$ is called the \textit{relatively free $H$-module algebra of the variety of $H$-module algebras generated by $A$}. Indeed, it is easy to see that $F\langle X|H \rangle / \Id^H(A)$ satisfies
the same universal property as $F\langle X|H \rangle$ except that we consider only $H$-module algebras $B$ satisfying $\Id^H(A) \subseteq \Id^H(B)$.

In some polynomial $H$-identities below we use other variables such as $x,y,\ldots$,
always assuming that in fact these variables coincide with some of the variables $x_1, x_2, \ldots$.

\begin{example}\label{ExampleIdH}
Consider the $\mathbb Z/2\mathbb Z$-grading
on $M_2(F)$ defined by $M_2(F)^{(\bar 0)} = \left\lbrace\left(\begin{smallmatrix}
* & 0 \\
0 & *
\end{smallmatrix}
 \right)\right\rbrace$
and 
$M_2(F)^{(\bar 1)} = \left\lbrace\left(\begin{smallmatrix}
0 & * \\
* & 0
\end{smallmatrix}
 \right)\right\rbrace$ and the corresponding $(F(\mathbb Z/2\mathbb Z))^*$-action.
 Then $$x^{h_0}y^{h_0}-y^{h_0}x^{h_0}\in \Id^{(F(\mathbb Z/2\mathbb Z))^*}(M_2(F))$$
 where $h_0\in (F(\mathbb Z/2\mathbb Z))^*$ is defined by $h_0(\bar 0) = 1$ and $h_0(\bar 1) = 0$.
\end{example}

The classification of all varieties of algebras with respect to their ideals
of polynomial identities seems to be a wild problem. This is the reason why
numeric characteristics of polynomial identities are studied. 
One of the most important
numeric characteristics is the codimension sequence.

Let $$P_n^H := \langle x_{\sigma(1)}^{h_1} x_{\sigma(2)}^{h_2}\dots x_{\sigma(n)}^{h_n} \mid \sigma \in S_n,\ h_i\in H\rangle_H \subset F\langle X|H \rangle$$
where $S_n$ is the $n$th symmetric group and
$n\in\mathbb N$. The elements of $P_n^H$ are called \textit{multilinear $H$-polynomials}
and the elements of $P_n^H \cap \Id^H(A)$ are called \textit{multilinear polynomial
$H$-identities} of $A$.

Using the linearization process~\cite[Section~1.3]{ZaiGia}, it is not difficult to see that
over a field of characteristic $0$ every polynomial $H$-identity of an $H$-module algebra $A$ is equivalent
to a finite set of multilinear polynomial $H$-identities of $A$. Therefore the spaces
$P_n^H \cap \Id^H(A)$, where $n\in\mathbb N$, contain all the information
of polynomial $H$-identities of $A$. For a given $H$-module algebra $A$ the number $c_n^H(A):=\dim\left(\frac{P_n^H}{P_n^H\cap\, \Id^H(A)}\right)$, $n\in\mathbb N$, is called the \textit{$n$th codimension of polynomial $H$-identities}
of $A$. 

In the case when $H=F$ we obtain ordinary polynomial identities and their codimensions.
In the case when $H=(FG)^*$ for some finite group $G$, every $H$-module algebra $A$ is just
a $G$-graded algebra. Here one can introduce graded polynomial identities
and their codimensions~$c_n^{G\text{-gr}}(A)$, however it turns out that $c_n^{G\text{-gr}}(A)
=c_n^{(FG)^*}(A)$ for every $n\in\mathbb N$~\cite[Lemma~1]{ASGordienko3}.

In the 1980's, a conjecture concerning the asymptotic behaviour of codimensions of ordinary polynomial
identities was made by S.\,A.~Amitsur~\cite[Conjecture~6.1.3]{ZaiGia}. This conjecture was proved in 1999 by A.\,Giambruno and M.\,V.~Zaicev~\cite[Theorem~6.5.2]{ZaiGia}. For polynomial $H$-identities
the analog of Amitsur's conjecture can be formulated in the following form, which
belongs to Yu.\,A.~Bahturin:

\begin{conjecture}\label{ConjectureAmitsurBahturin} Let $A$ be a finite dimensional associative $H$-module algebra
for a Hopf algebra $H$ over a field of characteristic $0$. Then there exists
an integer $\PIexp^H(A):=\lim\limits_{n\to\infty}
 \sqrt[n]{c^H_n(A)}$.
\end{conjecture}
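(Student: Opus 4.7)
The plan is to combine the support equivalence reductions developed throughout this paper with the established techniques of Giambruno--Zaicev for ordinary and graded polynomial identities. First, by Lemma~\ref{LemmaHEquivCodimTheSame}, the codimension sequence $c_n^H(A)$ depends only on the support equivalence class of the $H$-action on $A$. Hence, replacing $H$ by its universal Hopf algebra $(H_\zeta, \psi_\zeta)$ from Theorem~\ref{TheoremFinExistenceHMod}, one may assume that the image $\zeta(H)\subseteq \End_F(A)$ is finite dimensional as an algebra (since $A$ is finite dimensional) and that the acting Hopf algebra is uniquely determined up to isomorphism by $\zeta(H)$. This reduction is essential because the conjecture as stated allows $H$ to be infinite dimensional and of arbitrary type.

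Next I would establish an $H$-invariant Wedderburn--Malcev decomposition of $A$: a direct sum $A = B_1 \oplus \cdots \oplus B_s \oplus J(A)$ of vector spaces, where $J(A)$ is the Jacobson radical, each $B_i$ is an $H$-invariant $H$-simple subalgebra, and $J(A)$ is itself $H$-invariant. The $H$-invariance of $J(A)$ follows from the fact that $H$ acts by algebra endomorphisms composed from endomorphisms of $A$ that preserve nilpotent ideals; the lift of the semisimple quotient to an $H$-invariant subalgebra in characteristic $0$ would be proved by a cohomological argument generalizing the classical one, using the finite dimensionality of $\zeta(H)$ and, where available, the Cartier--Gabriel--Kostant decomposition and the tools of Section~\ref{SectionActionsCocommHopfAlgebras}.

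Once the decomposition is in hand, one mimics the Giambruno--Zaicev argument. For the \emph{upper bound}, one bounds $c_n^H(A)$ by considering $A$-module structures on multilinear tensor powers twisted by $\zeta(H)$ and applies Latyshev-type estimates on the dimensions of the corresponding $S_n$-representations (hook-shape restrictions coming from the Amitsur--Capelli identities of $A$), yielding
\begin{equation*}
\limsup_{n\to\infty}\sqrt[n]{c_n^H(A)} \leq d,
\end{equation*}
where $d := \max\bigl\{\dim(B_{i_1}\oplus\cdots\oplus B_{i_k}) \mathbin{\bigl|} B_{i_1}J(A)B_{i_2}J(A)\cdots J(A)B_{i_k}\neq 0\bigr\}$. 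For the \emph{lower bound}, one constructs, for each admissible configuration realizing the maximum $d$, an alternating multilinear $H$-polynomial non-vanishing on a suitable substitution obtained from bases of the relevant $B_{i_j}$'s and connecting elements from $J(A)$. The integer value of the exponent is then exactly $d$.

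The hard part will be the lower bound together with the existence of the $H$-invariant Wedderburn--Malcev splitting for arbitrary $H$. For group actions or cocommutative $H$ in characteristic $0$ one has averaging operators and Maschke-type theorems at hand, but for a general Hopf algebra these are unavailable; even after reducing to $H_\zeta$ via support equivalence, the universal Hopf algebra need not be semisimple, cocommutative, or even pointed (cf.\ Example~\ref{ExampleCocommUnivDifferent} and Proposition~\ref{PropositionDoubleNumbersUniversal}). The construction of the requisite alternating polynomials witnessing the lower bound, when no integration/averaging is available, is therefore likely to demand genuinely new ideas, perhaps exploiting the explicit form of $H_\zeta$ as a subalgebra of $R(\zeta(H))$.
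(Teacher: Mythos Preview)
The statement you are attempting to prove is labelled in the paper as a \emph{conjecture}, not a theorem; the paper does not offer a proof of it in full generality. It only records the known partial results (finite dimensional semisimple $H$; $H$ such that $J(A)$ is $H$-invariant; $H$ an iterated Ore extension of a finite dimensional semisimple Hopf algebra) and then, in Theorem~\ref{TheoremDoubleNumbersAmitsurPIexpH}, verifies the conjecture for the single algebra $F[x]/(x^2)$ by reducing via Theorem~\ref{TheoremDoubleNumbersClassify} and Lemma~\ref{LemmaHEquivCodimTheSame} to three explicit actions each covered by one of those partial results. So there is no ``paper's own proof'' to compare against; your proposal is an attack on an open problem.

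Your argument contains a genuine gap at the step where you assert that $J(A)$ is $H$-invariant. You write that this ``follows from the fact that $H$ acts by algebra endomorphisms composed from endomorphisms of $A$ that preserve nilpotent ideals''. This is false: an element $h\in H$ acts by an algebra endomorphism only when $h$ is group-like, since the module-algebra condition $h(ab)=(h_{(1)}a)(h_{(2)}b)$ is a twisted multiplicativity, not ordinary multiplicativity. The paper itself gives an explicit counterexample: in case~(3) of Theorem~\ref{TheoremDoubleNumbersClassify}, the Sweedler algebra $H_4$ acts on $F[x]/(x^2)$ with $v\bar x=\bar 1$, so $J(A)=F\bar x$ is \emph{not} $H_4$-stable. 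More generally, the paper's discussion immediately following Conjecture~\ref{ConjectureAmitsurBahturin} explicitly distinguishes the case ``when $J(A)$ is not an $H$-submodule'' as the one where the conjecture remains open outside special classes of $H$. Since your entire Wedderburn--Malcev decomposition and the subsequent Giambruno--Zaicev machinery rest on this invariance, the proposal does not go through. Passing to the universal Hopf algebra $H_\zeta$ does not help here: support equivalence preserves $\zeta(H)\subseteq\End_F(A)$, so if $J(A)$ fails to be stable under the original action it fails for every equivalent one.
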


Conjecture~\ref{ConjectureAmitsurBahturin} was proved in~\cite[Theorem~3]{ASGordienko3} for $H$ finite dimensional semisimple (this result was later generalized by Ya.~Karasik~\cite{Karasik} for the case when $A$ is a not necessarily finite dimensional PI-algebra) and in~\cite[Theorem~1]{ASGordienko8} for $H$-module algebras $A$ such that the Jacobson radical $J(A)$ is an $H$-submodule (the requirement that $A/J(A)$ is a direct sum of $H$-simple algebras is satisfied by~\cite[Theorem 1.1]{SkryabinHdecomp}, \cite[Lemma 4.2]{SkryabinVanOystaeyen}).
In the case when $J(A)$ is not an $H$-submodule, the conjecture was proved only for Hopf algebras $H$
that are iterated Ore extensions of finite dimensional semisimple Hopf algebras~\cite[Corollary~7.4]{ASGordienko13}.

The notion of equivalence of module structures reduces drastically the number
 of cases one has to consider in order to prove Conjecture~\ref{ConjectureAmitsurBahturin}:

\begin{lemma}\label{LemmaHEquivCodimTheSame}
Let $\zeta_1 \colon H_1 \to \End_F(A_1)$ and $\zeta_2 \colon H_2 \to \End_F(A_2)$
be equivalent module structures on algebras $A_1$ and $A_2$.
Then there exists an algebra isomorphism $F\langle X | H_1\rangle/\Id^{H_1}(A_1) 
\mathrel{\widetilde\to} F\langle X | H_2\rangle/\Id^{H_2}(A_2)$ that
for every $n\in \mathbb N$ restricts
to an isomorphism $$\frac{P_n^{H_1}}{P_n^{H_1}\cap\,\Id^{H_1}(A_1)}\mathrel{\widetilde\to} \frac{P_n^{H_2}}{P_n^{H_2}\cap\,\Id^{H_2}(A_2)}.$$
In particular, $c_n^{H_1}(A_1) = c_n^{H_2}(A_2)$.
\end{lemma}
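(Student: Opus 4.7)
By the definition of support equivalence, there is an algebra isomorphism $\varphi\colon A_1 \mathrel{\widetilde\to} A_2$ with $\tilde\varphi(\zeta_1(H_1)) = \zeta_2(H_2)$, where $\tilde\varphi$ denotes conjugation by $\varphi$. My plan is to construct mutually inverse algebra isomorphisms between the relatively free algebras by transferring operators back and forth through $\tilde\varphi$. First I would choose $F$-linear maps $\tau\colon H_1 \to H_2$ and $\tau'\colon H_2 \to H_1$ satisfying $\zeta_2\tau = \tilde\varphi\,\zeta_1$ and $\zeta_1\tau' = \tilde\varphi^{-1}\zeta_2$; these exist because one can compose $\zeta_1$ (resp.\ $\zeta_2$) with $\tilde\varphi$ (resp.\ $\tilde\varphi^{-1}$) and a linear section of $\zeta_2$ (resp.\ $\zeta_1$). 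Neither $\tau$ nor $\tau'$ is required to be a morphism of Hopf algebras; only linearity will be used in the sequel.

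Next I would extend $\tau$ uniquely to an algebra homomorphism $\Phi\colon F\langle X|H_1\rangle \to F\langle X|H_2\rangle$ by setting $\Phi(x_i^h) := x_i^{\tau(h)}$, and define $\Phi'$ analogously from $\tau'$. The key computation is that, for any $H_1$-polynomial $f(x_1,\ldots,x_n)$ and any $b_1,\ldots,b_n\in A_2$, writing $a_j := \varphi^{-1}(b_j)$ one has
$$\Phi(f)(b_1,\ldots,b_n) = \varphi\bigl(f(a_1,\ldots,a_n)\bigr).$$
This is immediate on monomials since each factor satisfies $\zeta_2(\tau(h_k))(b_{i_k}) = \varphi\bigl(\zeta_1(h_k)\,a_{i_k}\bigr)$ and $\varphi$ is multiplicative; the general case follows by linearity. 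As a consequence, $\Phi$ sends $\Id^{H_1}(A_1)$ into $\Id^{H_2}(A_2)$, and symmetrically $\Phi'$ sends $\Id^{H_2}(A_2)$ into $\Id^{H_1}(A_1)$; hence both descend to algebra homomorphisms $\bar\Phi$ and $\bar\Phi'$ between the two relatively free algebras.

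It remains to verify that $\bar\Phi$ and $\bar\Phi'$ are mutually inverse. By construction $\zeta_1(\tau'\tau(h)-h) = \tilde\varphi^{-1}\zeta_2(\tau(h))-\zeta_1(h) = 0$, so $k := \tau'\tau(h)-h$ lies in $\ker\zeta_1$. The monomial $x_i^k$ vanishes under every substitution in $A_1$, and more generally any monomial containing $x_i^k$ as a factor belongs to $\Id^{H_1}(A_1)$. Expanding $\Phi'\Phi(x_{i_1}^{h_1}\cdots x_{i_n}^{h_n}) - x_{i_1}^{h_1}\cdots x_{i_n}^{h_n}$ as a telescoping sum of such monomials then shows $\bar\Phi'\bar\Phi = \id$; the reverse composition is symmetric. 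Since $\tau$ and $\tau'$ preserve the total degree and the property of being multilinear in $x_1,\ldots,x_n$, $\bar\Phi$ restricts to a linear isomorphism between $P_n^{H_1}/(P_n^{H_1}\cap\Id^{H_1}(A_1))$ and $P_n^{H_2}/(P_n^{H_2}\cap\Id^{H_2}(A_2))$, giving $c_n^{H_1}(A_1)=c_n^{H_2}(A_2)$. I do not foresee a serious obstacle here; the only points requiring care are the well-definedness modulo identities (handled by the key computation) and the observation that exponents in $\ker\zeta_i$ propagate to $H_i$-identities of arbitrary degree.
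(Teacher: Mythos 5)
Your proposal is correct and follows essentially the same route as the paper: both choose $F$-linear (not Hopf-algebra) lifts of $\tilde\varphi$ and $\tilde\varphi^{-1}$ through $\zeta_1$, $\zeta_2$, extend them to algebra homomorphisms of the free $H$-module algebras on generators, check that identities are preserved via the relation $\zeta_2(\tau(h))\varphi(a)=\varphi(ha)$, and conclude mutual inverseness from the fact that $\tau'\tau(h)-h\in\ker\zeta_1$ forces $x^h-x^{\tau'\tau(h)}\in\Id^{H_1}(A_1)$. Your telescoping-sum remark just makes explicit a step the paper leaves implicit.
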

\begin{proof} Let $\varphi \colon A_1 \mathrel{\widetilde\to} A_2$ be an equivalence
of $H_1$- and $H_2$-module structures and let $\tilde\varphi \colon \End_F(A_1) \mathrel{\widetilde\to} \End_F(A_2)$ be the corresponding isomorphism of algebras of linear operators.
We have $\tilde\varphi(\zeta_1(H_1))=\zeta_2(H_2)$. Hence there exist $F$-linear maps $\xi \colon H_1 \to H_2$ and $\theta \colon H_2 \to H_1$ (which are not necessary homomorphisms) such that the following diagram commutes in both ways:
\begin{equation*}
\xymatrix{ H_1  \ar@<0.5ex>[r]^\xi \ar[d]_{\zeta_1}& H_2 \ar@<0.5ex>[l]^\theta \ar[d]^{\zeta_2} \\
\End_F(A_1) \ar[r]^{\tilde\varphi} & \End_F(A_2)}
\end{equation*}

Then \begin{equation}\label{EqXiActingCodimTheSame}\xi(h)\varphi(a)=\varphi(ha)\text{ for every }h\in H_1\text{ and }a\in A_1\end{equation}
and  \begin{equation}\label{EqZetaActingCodimTheSame}\varphi(\theta(h)a)=h\varphi(a)\text{ for every }h\in H_2\text{ and }a\in A_1.\end{equation}

Now we define the algebra homomorphisms $\tilde\xi \colon F\langle X | H_1\rangle
\to F\langle X | H_2\rangle$ and $\tilde\theta \colon F\langle X | H_2\rangle
\to F\langle X | H_1\rangle$
by $\tilde \xi\left(x_k^{h}\right):=x_k^{\xi(h)}$
for $h\in H_1$, $k\in\mathbb N$
and $\tilde \theta\left(x_k^h\right):=x_k^{\theta(h)}$
for $h\in H_2$, $k\in\mathbb N$. Equations~\eqref{EqXiActingCodimTheSame}
and~\eqref{EqZetaActingCodimTheSame} imply that 
$\tilde\xi\left( \Id^{H_1}(A_1) \right) \subseteq \Id^{H_2}(A_2)$
and 
$\tilde\theta\left( \Id^{H_2}(A_2) \right) \subseteq \Id^{H_1}(A_1)$.
Hence $\tilde\xi$ and $\tilde\theta$ induce homomorphisms
$\bar\xi \colon F\langle X | H_1\rangle/\Id^{H_1}(A_1)
\to F\langle X | H_2\rangle/\Id^{H_2}(A_2)$ and $\bar\theta \colon F\langle X | H_2\rangle/\Id^{H_2}(A_2)
\to F\langle X | H_1\rangle/\Id^{H_1}(A_1)$.
Note that~\eqref{EqXiActingCodimTheSame}
and~\eqref{EqZetaActingCodimTheSame} also imply $\theta(\xi(h))a=ha$ for all $a\in A_1$
and $h\in H_1$ and $\xi(\theta(h))a=ha$ for all $a\in A_2$
and $h\in H_2$. Hence $x^h - x^{\theta(\xi(h))}  \in \Id^{H_1}(A_1)$ and $x^h - x^{\xi(\theta(h))} \in \Id^{H_2}(A_2)$. As a consequence, $\bar\theta\bar\xi = \id_{F\langle X | H_1\rangle/\Id^{H_1}(A_1)}$
and $\bar\xi\bar\theta = \id_{F\langle X | H_2\rangle/\Id^{H_2}(A_2)}$, i.e. we get the desired isomorphism.

Comparing degrees of $H$-polynomials, we get 
$\bar\xi \left(\frac{P_n^{H_1}}{P_n^{H_1}\cap\,\Id^{H_1}(A_1)}\right) \subseteq \frac{P_n^{H_2}}{P_n^{H_2}\cap\,\Id^{H_2}(A_2)}$
and $\bar\theta \left(\frac{P_n^{H_2}}{P_n^{H_2}\cap\,\Id^{H_2}(A_2)}\right) \subseteq \frac{P_n^{H_1}}{P_n^{H_1}\cap\,\Id^{H_1}(A_1)}$ for every $n\in\mathbb N$.
\end{proof}

Now we can prove Conjecture~\ref{ConjectureAmitsurBahturin} for any unital Hopf algebra action
on $F[x]/(x^2)$.

\begin{theorem}\label{TheoremDoubleNumbersAmitsurPIexpH}
Suppose $F[x]/(x^2)$ is a unital $H$-module algebra for some Hopf algebra $H$ over a field $F$ of characteristic $0$. Denote by $d$ the dimension of the maximal $H$-invariant nilpotent ideal in $F[x]/(x^2)$.
Then there exist $C_1,C_2 > 0$ and $r_1,r_2 \in \mathbb R$ such that \begin{equation}\label{EquationHAmitsur}C_1 n^{r_1} (2-d)^n \leqslant c^H_n(F[x]/(x^2)) \leqslant C_2 n^{r_2} (2-d)^n\end{equation}
for all $n\in\mathbb N$.

In particular, the analog of Amitsur's conjecture holds for polynomial $H$-identities of $F[x]/(x^2)$. 
\end{theorem}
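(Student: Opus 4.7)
The plan is to combine the classification of unital $H$-module structures on $A = F[x]/(x^2)$ provided by Theorem~\ref{TheoremDoubleNumbersClassify} with the invariance of codimensions under support equivalence (Lemma~\ref{LemmaHEquivCodimTheSame}) to reduce the problem to a finite check. A preliminary observation is that the integer $d$ is itself invariant under support equivalence, because any equivalence of module structures maps $H_1$-submodules to $H_2$-submodules while preserving dimension and nilpotency of ideals. Since the only candidate for a nonzero nilpotent ideal of the commutative algebra $A$ is the one-dimensional ideal $F\bar x$, we have $d \in \{0,1\}$, and it suffices to verify~\eqref{EquationHAmitsur} for each of the three representative actions listed in Theorem~\ref{TheoremDoubleNumbersClassify}.

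For the trivial action (case (1)), the ideal $F\bar x$ is $H$-invariant, so $d = 1$ and $(2-d)^n = 1$. Commutativity of $A$ reduces every multilinear $H$-polynomial modulo the commutator identities to a scalar multiple of $x_1 \cdots x_n$, which is not an identity, and therefore $c_n^H(A) = 1$. For the $FG$-action (case (2)), again $F\bar x$ is $H$-invariant, so $d = 1$. Writing $e = (1+c)/2$ and $f = (1-c)/2$ for the projections onto $F\bar 1$ and $F\bar x$, the identity $x^f y^f = 0$ coming from $\bar x^2 = 0$ combined with commutativity restricts the surviving multilinear monomials to $x_1^{g_1} \cdots x_n^{g_n}$ with at most one $g_i = f$. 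A direct check that the resulting $n+1$ multilinear functions on $A^n$ are linearly independent gives $c_n^H(A) = n+1$, compatible with~\eqref{EquationHAmitsur}.

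The main case, and the principal obstacle, is the $H_4$-action of case (3). Here $v \bar x = \bar 1 \notin F\bar x$, so $A$ has no nonzero $H$-invariant nilpotent ideal, $d = 0$, and we must establish that $c_n^{H_4}(A)$ grows like $2^n$ up to polynomial factors. A direct matrix calculation shows that $v$ and $cv$ act identically on $A$; hence modulo $\Id^{H_4}(A)$ every multilinear polynomial is a linear combination of monomials $x_1^{h_1} \cdots x_n^{h_n}$ with $h_i \in \{e, f, v\}$, where commutativity has been used to fix the variable order. Beyond $x^f y^f = 0$, the essential additional relation is the identity $x^f y^v - x^v y^f$, which holds because $(fa)(vb) = \beta_a \beta_b \bar x = (va)(fb)$ for $a = \alpha_a \bar 1 + \beta_a \bar x$. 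Using these, each normal form is parametrised by the subset $T \subseteq \{1, \ldots, n\}$ of positions with $h_i \ne e$ together with a bit recording whether some $h_i$ equals $f$ (with $T$ required nonempty in that case). Direct evaluation on a general substitution $a_i = \alpha_i \bar 1 + \beta_i \bar x$ shows that the corresponding multilinear functions $A^n \to A$ are pairwise distinct, yielding $c_n^{H_4}(A) = 2^n + (2^n - 1) = 2^{n+1} - 1$, which confirms~\eqref{EquationHAmitsur} with $d = 0$ and gives $\PIexp^{H_4}(A) = 2$. The delicate step is showing that the listed identities generate all multilinear polynomial $H_4$-identities in degree $n$; this I would handle by exhibiting substitutions $(\alpha_i, \beta_i)$ that separate the normal forms above, forcing any further relation to vanish.
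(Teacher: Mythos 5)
Your proof is correct, and the reduction step is exactly the paper's: invoke Lemma~\ref{LemmaHEquivCodimTheSame} together with Theorem~\ref{TheoremDoubleNumbersClassify} (plus the observation, left implicit in the paper, that $d$ is preserved by support equivalence) to reduce to the three representative actions. Where you diverge is in handling those three cases: the paper disposes of them by citing general machinery --- \cite[Theorem~1]{ASGordienko8} for the two cases where $F\bar x$ is $H$-invariant ($d=1$), and \cite[Theorem~7.1, Corollary~7.4]{ASGordienko13} for the $H_4$-simple case ($d=0$) --- whereas you compute the codimensions by hand, obtaining the exact values $1$, $n+1$ and $2^{n+1}-1$. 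Your computations check out (e.g.\ for $n=1,2$ in the $H_4$ case one gets $3$ and $7$ multilinear functions, matching $2^{n+1}-1$), and your route is more elementary and self-contained, yielding precise codimension sequences rather than just the two-sided asymptotic bound; the paper's route is shorter and leans on theorems that cover whole classes of module algebras. One point to tighten: ``pairwise distinct'' is not the same as linearly independent, which is what you need. The cleanest fix is to note that $c_n^H(A)$ equals the dimension of the image of $P_n^H$ in the space of multilinear maps $A^n\to A$ (a multilinear $H$-polynomial is an identity iff the induced map vanishes), so you need only compute the span of the evaluated monomials; your normal forms evaluate to distinct multilinear monomials in the coordinates $\alpha_i,\beta_i$ multiplied by $\bar 1$ or $\bar x$, and distinct monomials over an infinite field of characteristic $0$ are automatically linearly independent. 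With that remark the ``delicate step'' you flag at the end disappears, and no separate argument about generators of the $T$-ideal of identities is needed.
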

\begin{proof} Lemma~\ref{LemmaHEquivCodimTheSame} implies that it is sufficient to prove~\eqref{EquationHAmitsur} for module structures described in Theorem~\ref{TheoremDoubleNumbersClassify}. In the first two cases the Jacobson radical of $F[x]/(x^2)$, which equals $F\bar x$, is $H$-invariant, i.e. $d=1$ and~\eqref{EquationHAmitsur} follows from
\cite[Theorem~1]{ASGordienko8}. In the last case $F[x]/(x^2)$ is an $H_4$-simple algebra (see~\cite{ASGordienko11, ASGordienko12}),
i.e. $d=0$ and~\eqref{EquationHAmitsur} follows from~\cite[Theorem~7.1, Corollary~7.4]{ASGordienko13}.
\end{proof}

\section*{Acknowledgements}

The authors are indebted to the referee for the many valuable comments and suggestions which led to a substantial improvement of the paper.
This work started while the second author was an FWO postdoctoral fellow at
Vrije Universiteit Brussel, whose faculty and staff he would like to thank for hospitality and many useful discussions. In addition, the authors are grateful to Yorck Sommerh\"auser for helpful discussions.

\end{document}